\newtheorem{thm}{Theorem}[section]
\newtheorem{prop}[thm]{Proposition}
\newtheorem{lem}[thm]{Lemma}
\newtheorem{cor}[thm]{Corollary}
\theoremstyle{definition}
\newcommand{\R}{\ensuremath{\mathscr{R}}}
\renewcommand{\L}{\ensuremath{\mathscr{L}}}
\newcommand{\D}{\ensuremath{\mathscr{D}}}
\newcommand{\J}{\ensuremath{\mathscr{J}}}
\renewcommand{\H}{\ensuremath{\mathscr{H}}}
\newcommand{\II}{\ensuremath{\mathscr{I}}}
\newcommand{\sub}{\subseteq}
 \newcommand{\e }{\varepsilon}
\newcommand{\s }{\sigma}
\renewcommand{\a}{\alpha}
\renewcommand{\b }{\beta}
\renewcommand{\d }{\delta}
\newcommand{\f }{\varphi}
\newcommand{\h }{\theta}
\newcommand{\G }{\Gamma}
\renewcommand{\leq}{\leqslant}
\renewcommand{\geq}{\geqslant}
\renewcommand{\t }{\tau}
\newcommand{\dom}{\operatorname{dom}}
\newcommand{\PG}{\operatorname{PG}}
\renewcommand{\r }{\rho}
\newcommand{\im}{\operatorname{im}}
\newcommand{\Com}{\operatorname{Com}}
\begin{document}
%\baselineskip{6mm}

\title{Graph products of right cancellative monoids}
\subjclass[2000]{20M10}
\keywords{graph product, right cancellative monoid, graph monoid,
inverse hull, F*-inverse monoid}
\date{\today}
\maketitle

\begin{center}
  John Fountain \\
   Department of Mathematics,
  University of York,\\
  Heslington, York YO10 5DD, U.K.\\
  {\bf e-mail} : {\tt jbf1@york.ac.uk}\\
    $\phantom{e}$\\

  Mark Kambites \\
School of Mathematics, University of Manchester\\
Manchester M13 9PL, U.K.\\
{\bf e-mail} :  {\tt Mark.Kambites@manchester.ac.uk}\\
  $\phantom{e}$\\
\end{center}

\begin{abstract}Our first main result shows that a graph
product of right cancellative monoids is itself right cancellative. If
each of the component monoids satisfies the condition that the
intersection of two principal left ideals is either principal or empty,
then so does the graph product. Our second
main result gives a presentation for the inverse hull
of such a graph product.
 We then specialise to
the case of the inverse hulls of graph monoids, obtaining what we call
polygraph monoids. Among other properties, we observe that polygraph
monoids are $F^*$-inverse. This follows from a general
characterisation of those right cancellative monoids with inverse hulls
that are $F^*$-inverse. 
\end{abstract}

\section*{Introduction}
Graph products of groups were introduced by E.\ R.\ Green in her 
thesis \cite{green} and have since been studied by several authors, for
example, \cite{hermiller} and \cite{crisplaca}. In these two papers,
passing reference is made to graph products of monoids, which are
defined in the same way as graph products of groups and have been
studied specifically by, among others, Veloso da Costa, and Fohry and Kuske
\cite{costa1,costa2,fohry}.  

In this paper we are interested in graph products of right cancellative
monoids. Free products and restricted direct products are special cases of graph
products, and a free or (restricted) direct product of right cancellative
monoids is again right cancellative. In Section~\ref{graph products}, in our first main result, we
generalise these observations to obtain a corresponding result for graph
products. 

We then concentrate on right cancellative monoids in which the
intersection of two principal left ideals is either principal or empty.
Following the terminology from ring theory (see for example
\cite{beauregard}) we call these monoids \textit{left LCM monoids}.  A useful concept in the study of these
monoids is 
 the notion of the inverse hull of a right
cancellative monoid. In Section~\ref{inverse hulls}, after generalities on inverse hulls, we give several
(known)  characterisations of inverse hulls of left LCM monoids and use them to show 
 that a graph
product of left LCM monoids is itself left LCM. We then consider
presentations for inverse hulls of graph products
of left LCM monoids.
In Section~\ref{polygraph monoids} we
specialise the presentation to the case where each component monoid is
free on one generator, we obtain what we call 
polygraph monoids, generalising the polycylic monoids discussed in
\cite[Chapter~9]{lawson}. 

In the final section, we concentrate on left LCM monoids with two-sided
cancellation. Among these monoids we
characterise those with an inverse hull that is $F^*$-inverse
(see Section~\ref{F*-inverse} for the definition), and observe that, in
particular, polygraph monoids are $F^*$-inverse.

We assume that the reader is familiar with the basic ideas of semigroup
theory  (see,
for example, \cite{cp61,howie,lawson}).

\section{Graph products} \label{graph products}

 For us, a \textit{graph} $\Gamma =(V,E)$ is a set $V$ of
\textit{vertices} together with an irreflexive, symmetric relation 
$E \sub  V \times V$ whose elements are called
\textit{edges}. In particular, $\Gamma$ is loop free. 
We say that $u$ and $v$ are \textit{adjacent} in $\Gamma$
if $(u,v) \in E$. For each $v\in V$, let $M_v$ be a monoid; whenever
necessary we can, without loss of generality, assume the monoids $M_v$
are disjoint. We denote the free product of the $M_v$ by $\prod^{\star}M_v$
and write $x \centerdot y$ for the product of $x,y\in \prod^{\star}M_v$.

We define the \textit{graph product} $\Gamma_{v\in V}M_v$ of the
$M_v$ to be the quotient of  $\prod^{\star}M_v$ factored by the
congruence generated by the relation
 $$R_{\Gamma} =  \{ (m\centerdot n , n\centerdot m) : m\in M_u, n\in M_v \text{ and } u,v \text{ are
adjacent in } \Gamma \}.$$  

Alternatively, if for each $M_v$ we have a presentation $\langle
A_v \mid R_v \rangle$, 
then $\Gamma_{v\in V}M_v$  is the monoid with presentation 
$\langle A \mid R\rangle$ where 
$$A = \bigcup_{v\in V}A_v  \text{ \ and \ } 
R = \bigcup_{(u,v)\in E}\{ ab=ba: a\in A_u, b\in A_v\} \ \cup \ \bigcup_{v\in V}R_v.$$

For the rest of this section we will write $M$ for $\Gamma_{v\in V}M_v$. 
The $M_v$ are called the \textit{components} of $M$, and we denote multiplication in both
$M$ and its components by concatenation. It follows from Theorem~\ref{green}
below that the latter embed
naturally in the former, and so there should be no cause for confusion.

If the graph has no edges, $M$ is  the free product of the $M_v$, and at
the other extreme, if the graph is complete, $M$ is their restricted direct
product.

A special case of interest is when all the $M_v$ are isomorphic to the
additive monoid of non-negative integers. The graph product is then
called a \textit{graph monoid} and denoted by $M(\Gamma)$. Graph monoids
are  also  known variously as  \textit{free
partially commutative monoids},  \textit{right-angled Artin monoids}, and
\textit{trace monoids}. These monoids and the corresponding groups have
been extensively investigated (see, for example, \cite{diekert} for
monoids, and \cite{charney} for groups).

Now 
let $X$ be the disjoint union of the $M_v\setminus \{1\}$, and for 
$m \in M_v\setminus \{1\}$ write $C(m) = v$.
We denote the product in the free monoid $X^*$ by $x\circ y$ to 
distinguish it from the products in $M$ and the $M_v$. Clearly
there is a canonical surjective homomorphism $\sigma : X^* \to M$ so
that each element $a$ of $M$ can be represented by an element of
$X^*$, called an \textit{expression} for $a$. 
If  $x_1 \circ x_2 \dots \circ x_n \in
X^*$ is an expression for $a \in M$, the $x_i$ are the
\textit{components} of the expression, and if $C(x_i) = v$, then $x_i$ is
a $v$-component.  If $x_i$ and $x_{i+1}$ are both
$v$-components,
then we may obtain a shorter expression for $a$ by, in the terminology
of \cite{hermiller}, \textit{amalgamating} $x_i$ and $x_{i+1}$: if
$x_i,x_{i+1}\in M_v$ and $x_ix_{i+1} = 1$, delete
$x_i\circ x_{i+1}$; otherwise replace it by the single element $y_i$
of $M_v$ where $y_i = x_ix_{i+1}$ in $M_v$.

If $(C(x_j), C(x_{j+1})) \in E$ for some $j$, then we may obtain a
different expression for $a$ by replacing  $x_j\circ x_{j+1}$ by
$x_{j+1}\circ x_j$. Again we follow \cite{hermiller} and call such
 a move a \textit{shuffle}. Two expressions are
\textit{shuffle equivalent} if one can be obtained from the other by a
sequence of shuffles.

A \textit{reduced expression} is an element $x_1 \circ x_2 \dots \circ x_n \in X^*$
which satisfies
\begin{itemize}
\item [$(i)$] whenever $i < j$ and $C(x_i) = C(x_j)$, there exists
$k$ with $i < k < j$ and $(C(x_i), C(x_k)) \notin E$.
\end{itemize}

Notice that no amalgamation is possible in a reduced expression, and
that a shuffle of a reduced expression is again a reduced expression.
The following is the monoid version of a result of Green \cite{green}
which can also be deduced easily from \cite[Theorem~6.1]{costa1}.

\begin{thm} \label{green}
Every element of $M$ is represented by a reduced expression. Two reduced
expressions represent the same element of $M$ if and only if they are
shuffle equivalent.
\end{thm}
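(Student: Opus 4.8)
The plan is to prove this by a confluence/normal-form argument on the free monoid $X^*$, following the standard rewriting-system approach to such normal-form theorems. First I would observe that existence of a reduced expression for every element is easy: starting from any expression $x_1 \circ \cdots \circ x_n$ for $a \in M$, as long as condition $(i)$ fails there are indices $i < j$ with $C(x_i) = C(x_j)$ and every intermediate component commuting with $x_i$; then a sequence of shuffles brings $x_j$ adjacent to $x_i$, and an amalgamation strictly shortens the expression while preserving the image in $M$. Since length cannot decrease forever, we eventually reach a reduced expression. The "if" direction is also immediate: a shuffle replaces a subword $x_j \circ x_{j+1}$ with $x_{j+1} \circ x_j$ where $(C(x_j), C(x_{j+1})) \in E$, and $R_\Gamma$ forces $x_j x_{j+1} = x_{j+1} x_j$ in $M$, so shuffle-equivalent expressions have the same image under $\sigma$.

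The substance is the "only if" direction: if two reduced expressions $u$ and $w$ satisfy $\sigma(u) = \sigma(w)$, then they are shuffle equivalent. I would set up the rewriting system on $X^*$ whose rules are (a) amalgamation moves (length-decreasing) and (b) the defining relations of each $M_v$ lifted to $X^*$, oriented so as to be confluent using a fixed confluent presentation of each component monoid $M_v$ — or, more cleanly, work directly with the congruence $\ker \sigma$ on $X^*$ and analyse how it is generated. The congruence $\ker\sigma$ is generated by: the $R_v$ (relations internal to each component), the amalgamation identities of the form $x_i \circ x_{i+1} \leftrightarrow y_i$, and the shuffles. The key structural claim is that any equality $\sigma(u) = \sigma(w)$ can be realised by a sequence of these elementary moves, and then one shows that, restricted to \emph{reduced} expressions, only shuffles are ever needed — i.e., reduced expressions form a complete set of normal forms for the subsystem in which amalgamation and component relations have been applied exhaustively, and the residual ambiguity among reduced normal forms is exactly shuffle equivalence.

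Concretely, I would argue by induction on the total length $|u| + |w|$, or on $|u|$ alone. Pick the first component $x_1$ of $u$, say a $v$-component. In $w$, there must be a $v$-component, and among all $v$-components of $w$ there is a leftmost one, $w_k$; using condition $(i)$ for $w$ one shows that $w_k$ is the \emph{unique} $v$-component of $w$ that is not preceded in $w$ by a non-adjacent component, hence every component $w_1, \ldots, w_{k-1}$ is adjacent to $v$ (lies in a vertex adjacent to $v$ — here I should be careful: $(C(w_i),v) \in E$), so $w$ is shuffle equivalent to $w_k \circ w_1 \circ \cdots \circ w_{k-1} \circ w_{k+1} \circ \cdots$. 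Now both $u$ and this shuffle of $w$ begin with a $v$-component; projecting to $M_v$ (or using that $M_v$ embeds in $M$, which is itself part of what Theorem~\ref{green} asserts — so one must instead use the free-product structure and the congruence directly) one deduces $x_1 = w_k$ in $M_v$, and cancelling the common first letter reduces to a strictly shorter instance of the claim, to which induction applies. Assembling the shuffles gives the result.

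The main obstacle is the bookkeeping in that inductive step: one must show rigorously that $x_1$ and $w_k$ represent the \emph{same} element of $M_v$ (not merely that both expressions start with some $v$-component), and that after deleting them the two remaining reduced expressions still represent the same element of $M$. Doing this cleanly really requires a confluent rewriting system for $X^* \to M$ — one fixes a confluent, terminating presentation for each $M_v$, combines these with the amalgamation rules and the shuffle rules, checks that all critical pairs resolve (the interesting overlaps being shuffle-vs-amalgamation and shuffle-vs-shuffle, which resolve by further shuffles, and amalgamation-vs-component-relation, which resolves inside a single $M_v$), and concludes confluence by Newman's lemma; the reduced expressions are then precisely the $\leftrightarrow$-classes of irreducibles modulo shuffling. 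I expect the critical-pair analysis, especially verifying that an overlap of two amalgamation sites that share a middle component resolves correctly and that a shuffle cannot create a new amalgamation opportunity without one already being "morally" present, to be where all the real care goes; everything else is routine. Since the paper cites Green \cite{green} and Veloso da Costa \cite{costa1}, I would in fact lean on those references for the combinatorial core and only sketch the translation to the monoid setting, noting that the arguments are identical to the group case once inverses are not assumed.
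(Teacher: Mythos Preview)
The paper does not prove this theorem at all: it is stated with the remark that it is ``the monoid version of a result of Green \cite{green} which can also be deduced easily from \cite[Theorem~6.1]{costa1}'', and is thereafter used as a black box. Your final paragraph anticipates exactly this, so in that sense you and the paper end in the same place.

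Where you go further is in sketching an actual argument. The outline is broadly sound and is indeed close to how such normal-form theorems are proved (Green's thesis uses essentially the leftmost-letter induction you describe, and the rewriting/confluence viewpoint is the one underlying Hermiller--Meier \cite{hermiller} and related work). You correctly flag the two genuine difficulties: establishing $x_1 = w_k$ in $M_v$ without circularly invoking the embedding $M_v \hookrightarrow M$, and controlling the critical pairs in the rewriting system, particularly shuffle-versus-amalgamation overlaps. One point to tighten: your claim that the leftmost $v$-component $w_k$ of $w$ can always be shuffled to the front does not follow merely from $w$ being reduced; it requires knowing something about $\sigma(u) = \sigma(w)$, and the clean way to get it is via the confluence argument rather than the bare induction. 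So your instinct that ``doing this cleanly really requires a confluent rewriting system'' is right, and the inductive sketch preceding it is better viewed as motivation than as a standalone proof.
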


The \textit{length} of an expression is its length as an element of the
free monoid $X^*$; it is clear that shuffle equivalent expressions have
the same length, and so, in view of the theorem, all reduced expressions
representing a given element of $M$ have the same length. We shall use
this observation without further comment, but we note that it also
allows us to define the \textit{length} of an element of $M$ to be the
length of any reduced expression representing it. As an easy consequence of the
notion of length we have the following corollary which we record for
later use. First, we recall that a subset $U$ of a monoid $M$ is
\textit{right unitary} in $M$ if for all elements $m\in M$ and $u\in U$
we have $m\in U$ if $mu \in U$. There is a dual notion of \textit{left
unitary}, and $U$ is \textit{unitary} in $M$ if it is both right and left 
unitary. 

\begin{cor}\label{unitary}
Each $M_v$ is a unitary submonoid of $M$.
\end{cor}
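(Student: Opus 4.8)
The plan is to prove that $M_v$ is \emph{unitary} in $M$, i.e.\ both right and left unitary (that $M_v$ embeds as a submonoid of $M$ is a consequence of Theorem~\ref{green}, as already noted). These two properties are left--right dual: $M^{\mathrm{op}}$ is the graph product over $\Gamma$ of the monoids $M_v^{\mathrm{op}}$, and $M_v$ is left unitary in $M$ precisely when $M_v^{\mathrm{op}}$ is right unitary in $M^{\mathrm{op}}$, so it suffices to prove that $M_v$ is right unitary. Let $m\in M$ and $u\in M_v$ with $mu\in M_v$; we must show $m\in M_v$. The case $u=1$ is trivial, so assume $u\neq 1$ and fix a reduced expression $\mathbf{x}=x_1\circ\cdots\circ x_n$ for $m$. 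Writing $|\cdot|$ for length, we have $|mu|\leq 1$ since $mu\in M_v$, and it is enough to prove that $n\leq 1$ and that $x_1$ is a $v$-component when $n=1$ (for then $m=x_1\in M_v$, while $n=0$ gives $m=1\in M_v$).

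I would organise the proof around the effect on lengths of appending the single component $u$ to $\mathbf{x}$. \emph{First case}: every $v$-component of $\mathbf{x}$ is followed, within $\mathbf{x}$, by some component not commuting with $v$ (this covers in particular the case that $\mathbf{x}$ has no $v$-component). Then $\mathbf{x}\circ u$ satisfies condition~$(i)$ and so is reduced, whence $|mu|=n+1$; with $|mu|\leq 1$ this forces $n=0$, so $m=1\in M_v$. \emph{Second case}: otherwise. Then $\mathbf{x}$ has a $v$-component; let $x_p$ be the last one. Every component after $x_p$ commutes with $v$, so $x_p$ may be shuffled past them, showing $\mathbf{x}$ shuffle equivalent to $\mathbf{x}'\circ x_p$ with $\mathbf{x}'=x_1\circ\cdots\circ x_{p-1}\circ x_{p+1}\circ\cdots\circ x_n$. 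Being a shuffle of the reduced expression $\mathbf{x}$, this is reduced, and so is its prefix $\mathbf{x}'$; hence $\mathbf{x}'$ represents an element $m'$ with $|m'|=n-1$, $m=m'x_p$, and $mu=m'(x_pu)$.

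In the second case I would now split on whether $x_pu=1$ in $M_v$. If $x_pu\neq 1$, then replacing the last component $x_p$ of $\mathbf{x}'\circ x_p$ by the single $v$-component $x_pu$ yields another reduced expression (condition~$(i)$ depends only on the sequence of component types) representing $mu$, so $|mu|=n$ and hence $n\leq 1$. If $x_pu=1$, then $mu=m'$, so $|m'|=n-1\leq 1$, i.e.\ $n\leq 2$; and $n=2$ is impossible, for then $\mathbf{x}=x_1\circ x_2$, the element $m'$ is whichever of $x_1,x_2$ is not $x_p$, and $m'=mu\in M_v$ is a nonidentity element of length one, hence a $v$-component --- so $x_1$ and $x_2$ are both $v$-components, $\mathbf{x}$ admits an amalgamation of its two components, and $\mathbf{x}$ is not reduced, a contradiction. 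Thus in the second case $n\leq 1$; and $n\geq 1$ there since $\mathbf{x}$ contains the $v$-component $x_p$, so $n=1$ and $\mathbf{x}=x_p$, giving $m=x_p\in M_v$. In either case $m\in M_v$, so $M_v$ is right unitary, and the left unitary statement follows by the dual argument.

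The step I expect to require the most care is the assertion that, once $u$ has been absorbed, what remains is still reduced --- equivalently, that appending a single component to a reduced expression decreases the length by at most $1$. This rests on the facts that a shuffle of a reduced expression is reduced, that a prefix of a reduced expression is reduced, and that a nonidentity element of length one lies in a unique component submonoid; these are all immediate from Theorem~\ref{green} and the definitions, but they are exactly what legitimises the length bookkeeping. Everything else is a bounded case check.
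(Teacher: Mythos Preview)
Your proof is correct and follows the same length-based idea as the paper: elements of $M_v$ have length at most~$1$, so $mu\in M_v$ forces $|mu|\leq 1$, and from this one deduces $m\in M_v$. The paper's own proof is a two-line sketch that simply asserts ``it follows that $a\in M_v$'' from $|ac|\leq 1$; your argument is a careful unpacking of exactly that implication, tracking how appending a single $v$-component to a reduced expression can lower the length by at most one and only via a trailing $v$-component --- so the route is the same, you have just written out what the paper leaves implicit.
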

\begin{proof}
If $c\in M_v$, $a\in M$ and $ac\in M_v$, then $ac$ must have length 1
(or zero) and it follows that $a\in M_v$. Thus $M_v$ is right unitary in
$M$, and similarly, it is left unitary.
\end{proof}

%%%%%%%%%%%%%%%%%%%%%%%%%%%%%%%%%%%%%%%%%%%%%%%%%%%%%%%%%%%%%%%%%%%%%%%%

It is natural to ask how properties of $M$ are related to the
corresponding properties of the $M_v$. Several such questions are
considered in \cite{costa1,costa2,fohry}. Our interest is in right
cancellative monoids which do not seem to have been studied in this
context. If $M$ is right cancellative, then so too are the $M_v$ since
they are submonoids of $M$.  Our first aim is to show the converse, that
is, if all the $M_v$ are right cancellative, then so is $M$. Towards
this end we introduce the following terminology.

Let $a, a' \in M$, $v \in V$ and $c \in M_v \setminus \lbrace 1 \rbrace$.
We say that $a$ has \textit{final $v$-component} $c$ and
\textit{final $v$-complement} $a'$ if $a$ admits a
reduced expression $a_1 \circ a_2 \circ \dots \circ a_m \circ c$ 
such that $a_1 a_2 \dots a_m = a'$. We say that $a$ has
\textit{final $v$-component $1$} and \textit{final $v$-complement $a$} if $a$ has
a reduced expression $a_1 \circ \dots \circ a_m$ such that either
\begin{itemize}
\item[(i)] $C(a_j) \neq v$ for all $j$; or
\item[(ii)] there exists $k$ with $(C(a_k), v) \notin E$ and $C(a_j) \neq v$ for all $j \geq k$.
\end{itemize}

Of course, we may define the dual notions of \textit{initial $v$-component}  and
\textit{initial $v$-complement} in the obvious way.

\begin{prop} \label{comp}
For each vertex $v$, each element of $M$ has exactly one final $v$-component
and exactly one final $v$-complement.
\end{prop}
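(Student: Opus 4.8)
The plan is to work with reduced expressions throughout, leaning on Theorem~\ref{green}. Two easy observations come first: a prefix of a reduced expression is again reduced, and a single shuffle transposes two components whose vertices are adjacent --- hence, as $E$ is irreflexive, distinct --- so it never transposes two $v$-components. Consequently the subsequence of $v$-components of a reduced expression is unchanged by shuffles, and therefore, by Theorem~\ref{green}, depends only on the element $a\in M$ that the expression represents. In particular it is determined by $a$ whether $a$ has any $v$-component, and, if so, what its last $v$-component $c\in M_v\setminus\{1\}$ is.

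For existence, take a reduced expression $a_1\circ\cdots\circ a_m$ of $a$. If no $a_i$ is a $v$-component, the first clause of the definition gives final $v$-component $1$ with complement $a$. Otherwise let $k$ be the position of the last $v$-component. If $C(a_j)$ is adjacent to $v$ for every $j>k$, then $a_k$ may be shuffled successively past $a_{k+1},\dots,a_m$; this produces a reduced expression $a_1\circ\cdots\circ a_{k-1}\circ a_{k+1}\circ\cdots\circ a_m\circ a_k$ for $a$, whose prefix is reduced of length $m-1$, so $a$ has final $v$-component $a_k$ and, as complement, the element that prefix represents. If instead $C(a_\ell)$ is not adjacent to $v$ for some $\ell>k$, choose $\ell$ largest with this property: then $(C(a_\ell),v)\notin E$ and $C(a_j)\neq v$ for all $j\geq\ell$, so the second clause of the definition gives final $v$-component $1$ with complement $a$.

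For uniqueness, note first that any reduced expression witnessing a final $v$-component $c\neq1$ ends in $c$, so $c$ is the last term of the subsequence of $v$-components of $a$; a non-trivial final $v$-component is therefore unique, and equals the $c$ found above. Given this $c$, the complement is unique as well: if $a$ has reduced expressions $e_1\circ c$ and $e_2\circ c$, which are shuffle equivalent, track the position of the distinguished copy of $c$ along a sequence of shuffles joining them. Using the invariance above together with the fact that shuffles do not move $v$-components past one another, this token remains the last $v$-component at every stage, and deleting it from each expression in the sequence produces a sequence whose consecutive terms are equal or differ by one transposition of adjacent-vertex components. The two ends of this sequence are $e_1$ and $e_2$, which therefore represent the same element of $M$, since adjacent-vertex elements commute there.

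The one delicate point, which I expect to be the main obstacle, is to rule out $a$ having both a trivial and a non-trivial final $v$-component. For a reduced expression $w$ with a $v$-component, let $S(w)$ be the set of vertices occurring strictly after its last $v$-component. A routine case analysis of a single shuffle shows that its effect on $S$ is to add or remove only vertices adjacent to $v$ (the only way a shuffle can alter $S$ is by sliding an adjacent-vertex component past the last $v$-component); hence whether every vertex of $S(w)$ is adjacent to $v$ is, by Theorem~\ref{green}, a property of $a$. A reduced expression ending in a $v$-component has $S=\emptyset$, whereas the expression supplied by the second clause in the existence argument has a non-adjacent vertex after its last $v$-component --- a contradiction. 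So exactly one of the two cases occurs for each $a$, and in both the final $v$-component and its complement are determined; the statements about initial $v$-components follow by the symmetric argument.
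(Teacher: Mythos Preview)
Your proof is correct and follows essentially the same approach as the paper's: both establish existence by shuffling the last $v$-component to the end when possible, and both prove uniqueness by showing that the relevant data (the last $v$-component and, in your formulation, the set $S(w)$; in the paper's, conditions (i) and (ii) directly) are invariant under shuffles, then appeal to Theorem~\ref{green}. Your organization is slightly tidier --- isolating the invariance of the $v$-subsequence up front and packaging the ``trivial versus non-trivial'' dichotomy via the invariant $S(w)$ --- but the underlying argument, including the deletion of the tracked $c$-token to prove uniqueness of the complement, is the same as the paper's.
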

\begin{proof}
For existence, suppose $x \in M$ and let
$$a_1 \circ \dots \circ a_m$$
be a reduced expression for $x$. If conditions (i) or (ii) apply, then,
by definition, $x$ has final $v$-component $1$ and final $v$-complement $x$.
Otherwise, there is a largest integer $j$  with $C(a_j) = v$. If $(C(a_k), v) \notin E$
for some $k > j$, then condition (ii) holds. Hence $(C(a_k), v) \in E$
for all $k > j$, and it follows easily that one can shuffle $a_j$ to the end to
obtain a reduced expression
$$a = a_1 \circ \dots \circ a_{j-1} \circ a_{j+1} \circ \dots \circ a_m \circ a_j$$
so that $x$ has final $v$-component $a_j$ and final $v$-complement
$a_1 \dots a_{j-1} a_{j+1} \dots a_m$.

For uniqueness, suppose first for a contradiction that $x$ has distinct final $v$-components $1$
and $d \neq 1$. Then $x$ has reduced expressions
$a = a_1 \circ \dots \circ a_m$ and $b = b_1 \circ \dots \circ b_n \circ d$ where either
\begin{itemize}
\item[(i)] $C(a_j) \neq v$ for all $j$; or
\item[(ii)] there exists $k$ with $(C(a_k), v) \notin E$ and $C(a_j) \neq v$ for all $j > k$.
\end{itemize}
By 
Theorem~\ref{green}, $b$ can be obtained from $a$ by a sequence of shuffles.
 But clearly in case (i) such a shuffle can never
introduce a $v$-component, while in case (ii) no such shuffle can
change the fact that there exists $a_k$ with $(C(a_k), v) \notin E$ and
$C(a_j) \neq v$ for all $j > k$. Since $b$ does not satisfy either of
the conditions (i) or (ii), this gives a contradiction.

Suppose now that $x$ has reduced expressions
$$a = a_1 \circ \dots \circ a_m \circ c$$ and $$b = b_1 \circ \dots \circ b_m \circ d$$ where
$c,d \in M_v$, $c \neq 1$, $d \neq 1$.
By Theorem~\ref{green}, $b$ can be obtained from
$a$ by a sequence of shuffles. It is clear that no such
shuffle can change the value of the last $v$-component, so we must
have $c = d$.

We now turn our attention to showing that final $v$-complements are unique.
If the (unique) final $v$-component of $x$ is $1$ then by definition we have
that $x$ is the (unique) final $v$-complement of itself, so there is nothing
to prove.
So suppose $x$ has final $v$-component $c \neq 1$, and that there are
reduced expressions
$$a = a_1 \circ \dots \circ a_m \circ c$$ and $$b = b_1 \circ \dots \circ b_m \circ c$$
for $x$. Now by Theorem~\ref{green}, there is a sequence of shuffles
which takes $a$ to $b$. Clearly just by removing those applications which
involve the final $v$-component $c$ of the word, we obtain a sequence of shuffles
 which can be applied to $a_1 \circ \dots \circ a_m$ to yield
$b_1 \circ \dots \circ b_m$. Since these expressions are reduced, it follows by 
Theorem~\ref{green} again that $a_1 \circ \dots \circ a_m$ and $b_1 \circ \dots \circ b_m$ represent
the same element. Thus, $x$ has exactly one final $v$-complement.
\end{proof}

\begin{lem} \label{component lemma}
Let $a \in M$ and $c \in M_v$. 
Suppose $a$ has final $v$-component $d$ and final $v$-complement $a'$.
Then $ac$ has final $v$-component $dc$ and final $v$-complement $a'$.
\end{lem}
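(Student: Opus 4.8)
The plan is to lean on Proposition~\ref{comp}: since $ac$ has \emph{exactly one} final $v$-component and \emph{exactly one} final $v$-complement, it suffices to produce one reduced expression for $ac$ (or, in the degenerate case, to verify conditions (i)/(ii)) that witnesses $dc$ and $a'$ as the final $v$-component and final $v$-complement; uniqueness then does the rest. So I would start by fixing a reduced expression for $a$ realising the data we are given. If $d \neq 1$ this has the form $a_1 \circ \dots \circ a_m \circ d$ with $a_1 \cdots a_m = a'$, so that $a = a'd$. If $d = 1$ it has the form $a_1 \circ \dots \circ a_m$ with $a = a' = a_1 \cdots a_m$, and it satisfies one of the conditions (i), (ii) in the definition of ``final $v$-component $1$''. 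The argument then splits according to whether $dc = 1$ in $M_v$.

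Suppose first $dc \neq 1$. If $d \neq 1$, I would simply replace the final component $d$ of the chosen reduced expression by the element $dc \in M_v \setminus \{1\}$. Since $C(dc) = v = C(d)$ and reducedness of an expression depends only on the sequence of vertices $C(x_i)$ and the edge set $E$, the word $a_1 \circ \dots \circ a_m \circ (dc)$ is again reduced; it represents $a'(dc) = (a'd)c = ac$, and it exhibits final $v$-component $dc$ and final $v$-complement $a_1 \cdots a_m = a'$. If instead $d = 1$ (so $dc = c \neq 1$), I would append $c$ to get $a_1 \circ \dots \circ a_m \circ c$. The only way this could fail to be reduced is via a pair $(a_i, c)$ with $C(a_i) = v$; in case (i) there is no such $a_i$, and in case (ii) the index $k$ with $(C(a_k), v) \notin E$ lies strictly between any such $i$ and $m+1$, so it serves as the required intermediate index. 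Hence the word is reduced, represents $ac$, and witnesses final $v$-component $c = dc$ and final $v$-complement $a_1 \cdots a_m = a = a'$.

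Now suppose $dc = 1$. Then $ac = a'$: this is immediate from $a = a'd$ when $d \neq 1$, and when $d = 1$ we have $c = dc = 1$, so $ac = a = a'$. Thus I must check that $a'$ itself has final $v$-component $1$ and hence (by definition) final $v$-complement $a'$. The prefix $a_1 \circ \dots \circ a_m$ of the chosen reduced expression for $a$ is reduced. If none of the $a_j$ is a $v$-component, condition (i) holds. Otherwise, when $d = 1$ the word already satisfies (i) or (ii) by our initial choice; and when $d \neq 1$, letting $i^{*}$ be the largest index with $C(a_{i^{*}}) = v$, reducedness of $a_1 \circ \dots \circ a_m \circ d$ applied to the two positions $i^{*}$ and $m+1$ (both carrying vertex $v$) yields $k$ with $i^{*} < k \leq m$ and $(C(a_k), v) \notin E$; since $C(a_j) \neq v$ for all $j > i^{*}$, condition (ii) holds. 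In every case $ac$ has final $v$-component $dc$ and final $v$-complement $a'$, and Proposition~\ref{comp} closes the proof.

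I do not expect anything deep here: the work is entirely the bookkeeping of the degenerate situations, namely $d = 1$, $c = 1$ (and the possibility $m = 0$), together with the case $dc = 1$, where the only slightly delicate point is translating the reducedness of the expression $a_1 \circ \dots \circ a_m \circ d$ for $a$ into the defining conditions (i)/(ii) that guarantee $ac = a'$ has trivial final $v$-component.
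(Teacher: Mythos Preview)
Your proof is correct and follows essentially the same approach as the paper's own proof: fix a reduced expression for $a$ realising the given final $v$-component and complement, then produce a reduced expression for $ac$ (or verify conditions (i)/(ii) when $dc=1$) witnessing the claimed data. The only difference is cosmetic---you branch first on whether $dc=1$ and then on whether $d=1$, whereas the paper branches first on $d$---and you spell out a couple of verifications (why appending $c$ preserves reducedness when $d=1$; why the prefix satisfies (i)/(ii) when $d\neq 1$ and $dc=1$) that the paper leaves to the reader.
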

\begin{proof}
Suppose first that $a$ has final $v$-component $d \neq 1$. Then $a$ has 
a reduced expression of the form
\begin{equation}\label{eq:red}
a_1 \circ a_2 \circ \dots \circ a_m \circ d
\end{equation}
where $a_1 \circ \dots \circ a_m$ is a reduced expression for $a'$. If
$dc \neq 1$ then clearly
$$a_1 \circ a_2 \circ \dots \circ a_m \circ (dc)$$
is a reduced expression for $ac$, from which the required result is
immediate. On the other hand, if $dc = 1$ then
$$a_1 \circ a_2 \circ \dots \circ a_m$$ is a reduced expression for $ac = a'dc = a'$.
It follows easily from the fact that \eqref{eq:red} is reduced that either
this expression contains no $v$-components, or there exists $k$ such that
$(C(a_k), v) \notin E$ and $a_j \notin v$ for all $j \geq k$. Thus, $ac$
has final $v$-component $1$ and final $v$-complement $a'$, as required.

Now consider the case in which $a$ has final $v$-component $d = 1$. Then
$a$ has a reduced expression
$$a_1 \circ a_2 \circ \dots \circ a_m$$
where $a = a' = a_1 a_2 \dots a_m$ and either 
\begin{itemize}
\item[(i)] $C(a_j) \neq v$ for all $j$; or
\item[(ii)] there exists $k$ with $(C(a_k), v) \notin E$ and $C(a_j) \neq v$ for all $j \geq k$.
\end{itemize}
In both cases, it is easy to check that $a_1 \circ a_2 \circ \dots \circ a_m \circ c$ is a
reduced expression for $ac$, from which it follows that
$ac$ has final $v$-component $dc = c$ and final $v$-complement $a = a'$
as required.
\end{proof}

\begin{thm} \label{cancellative}
A graph product of right [respectively left, two-sided] cancellative monoids
is right [respectively left, two-sided] cancellative.
\end{thm}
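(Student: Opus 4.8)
The plan is to establish the right cancellative case directly, then obtain the left cancellative case by symmetry and the two-sided case as the conjunction of the two. For the symmetry, reversing a word induces a length-preserving bijection on $X^*$ that carries shuffles to shuffles and reduced expressions to reduced expressions (here one uses only that $E$ is symmetric), hence, via Theorem~\ref{green}, an anti-isomorphism $M = \Gamma_{v\in V}M_v \to \Gamma_{v\in V}M_v^{\mathrm{op}}$; since $M_v$ is left cancellative exactly when $M_v^{\mathrm{op}}$ is right cancellative, it suffices to prove: if every $M_v$ is right cancellative then $M$ is right cancellative.

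So assume each $M_v$ is right cancellative and suppose $xc = yc$ in $M$. First I would reduce to the case where $c$ lies in a single $M_v \setminus \{1\}$: picking a reduced expression $c_1 \circ \cdots \circ c_k$ for $c$, each $c_i \in M_{v_i}\setminus\{1\}$, we have $xc_1\cdots c_k = yc_1\cdots c_k$, and cancelling the component $c_k$, then $c_{k-1}$, and so on reduces the problem to cancelling a single such element (the case $k=0$ being trivial). Thus fix $v$ and $c \in M_v\setminus\{1\}$ with $xc = yc$. Let $d, a'$ be the final $v$-component and final $v$-complement of $x$, and $e, b'$ those of $y$ (these exist and are unique by Proposition~\ref{comp}). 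By Lemma~\ref{component lemma}, $xc$ has final $v$-component $dc$ and final $v$-complement $a'$, while $yc$ has final $v$-component $ec$ and final $v$-complement $b'$. Since $xc = yc$, Proposition~\ref{comp} gives $dc = ec$ in $M_v$ and $a' = b'$, whence $d = e$ by right cancellativity of $M_v$. Finally $x = a'd$ --- read off a reduced expression $a_1\circ\cdots\circ a_m\circ d$ with $a_1\cdots a_m = a'$ when $d \neq 1$, and from the definition $x = a'$ when $d = 1$ --- and likewise $y = b'e$, so $x = a'd = b'e = y$, as required.

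I do not anticipate a real obstacle: the genuine work has already been absorbed into Proposition~\ref{comp} and Lemma~\ref{component lemma}, and what remains is bookkeeping. The only points demanding a little care are the reduction to a single-component cancellee $c$ (so that Lemma~\ref{component lemma} applies), the uniform handling of the boundary cases $d = 1$ and $e = 1$, and stating the reversal symmetry precisely enough that the left cancellative case really does follow from the right cancellative one.
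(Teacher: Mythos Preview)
Your proof is correct and essentially follows the paper's own argument: reduce to cancelling a single $c\in M_v$, then apply Proposition~\ref{comp} and Lemma~\ref{component lemma} to compare final $v$-components and final $v$-complements. The only cosmetic differences are that the paper handles the left cancellative case by invoking the dual notions of initial $v$-component and complement rather than your reversal anti-isomorphism, and it phrases the reduction to a single-component $c$ as ``components generate $M$'' rather than as an induction on a reduced expression for $c$.
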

\begin{proof}
We prove the result for right cancellative monoids. The corresponding
result for left cancellative monoids is proved similarly using initial
$v$-components and complements, and the result for cancellative monoids
is an immediate consequence of the one-sided results.

 First
observe that, since the graph product monoid is generated by elements from
the embedded components it suffices to show that elements of the embedded
components are right cancellable, that is, that $ac = bc$ implies $a = b$
whenever $c$ belongs $ M_v$ for some $v \in V$.

Suppose that $a$ and $b$ have (unique) final $v$-components $d$ and $e$
respectively, and (unique) final $v$-complements $a'$ and $b'$ respectively.
Then by the preceding lemma, $ac$ has final $v$-component $dc$ and final
$v$-complement $a'$, while $bc$ has final $v$-component $ec$ and final
$v$-complement $b'$.

Since $ac = bc$, we deduce from Proposition~\ref{comp} that $dc = ec$ and
$a' = b'$. But $d$, $e$ and $c$ lie $M_v$ which by assumption is right
cancellative, so we deduce that $d = e$, and hence that $a = a' d = b' e = b$
as required to complete the proof.
\end{proof}

We next consider the question of whether a graph product of monoids each
of which is embeddable in a group is itself embeddable in a group. A
positive answer is a consequence of the next proposition which gives a
universal property defining the graph product. 
We retain the notation of this section. 

\begin{prop} \label{homomorphisms1}
Let $N$ be a monoid and suppose that for each $v\in V$ there is  a
homomorphism $\f_v:M_v \to N$ such that 
$$ (x\f_v)(y\f_u) = (y\f_u)(x\f_v) \text{ for all } (u,v) \in E \text{
and all } x\in M_v, y\in M_u. \qquad (*)$$ 
Put $M=\Gamma_{v\in V}M_v$.      Then there
 is a unique homomorphism $\f:M \to N$ such that $x\f = x\f_v$ for all
$x\in M_v$ and all $v\in V$.
\end{prop}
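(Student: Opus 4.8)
The plan is to exploit the universal property of the free product together with the explicit description of the graph product as a quotient of the free product. First I would recall that the free product $\prod^{\star} M_v$ carries the usual universal property: the family of homomorphisms $\f_v : M_v \to N$ induces a unique homomorphism $\psi : \prod^{\star} M_v \to N$ whose restriction to each $M_v$ agrees with $\f_v$. Concretely, $\psi$ sends a word $x_1 \centerdot x_2 \centerdot \dots \centerdot x_n$ (with $x_i \in M_{v_i}$) to the product $(x_1\f_{v_1})(x_2\f_{v_2})\cdots(x_n\f_{v_n})$ in $N$, and this is well defined precisely because $\prod^{\star} M_v$ is the coproduct in the category of monoids.

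The next step is to check that $\psi$ factors through the defining congruence of the graph product. By definition $M = \Gamma_{v\in V} M_v$ is the quotient of $\prod^{\star} M_v$ by the congruence generated by $R_\Gamma = \{(m \centerdot n, n \centerdot m) : m \in M_u,\ n \in M_v,\ (u,v)\in E\}$. So it suffices to verify that $\psi$ identifies the two sides of each generating pair, i.e.\ that $(m \centerdot n)\psi = (n \centerdot m)\psi$ for $(u,v) \in E$, $m \in M_u$, $n \in M_v$. But $(m\centerdot n)\psi = (m\f_u)(n\f_v)$ and $(n \centerdot m)\psi = (n\f_v)(m\f_u)$, and these are equal by hypothesis $(*)$. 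Since $\psi$ collapses every generating pair, it collapses the whole congruence they generate (a homomorphism respecting a generating set of a congruence respects the congruence), and hence by the universal property of the quotient there is a unique homomorphism $\f : M \to N$ with $\f \circ \pi = \psi$, where $\pi : \prod^{\star} M_v \to M$ is the quotient map. Composing with the embedding $M_v \hookrightarrow \prod^{\star}M_v \xrightarrow{\pi} M$ (which by Theorem~\ref{green} is the natural inclusion of the component) shows $x\f = x\f_v$ for $x \in M_v$.

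For uniqueness of $\f$, I would note that $M$ is generated as a monoid by the union of the images of the $M_v$ (again immediate from Theorem~\ref{green}, since every element has a reduced expression which is a product of elements from the components); any homomorphism out of $M$ is determined by its values on a generating set, and the condition $x\f = x\f_v$ pins those values down. I do not expect a serious obstacle here: the only thing requiring a little care is making the "$\psi$ respects the congruence generated by $R_\Gamma$" step precise — one should state explicitly that if a homomorphism identifies each pair in a relation $R$ then it identifies each pair in the congruence generated by $R$ (because the kernel congruence of $\psi$ is a congruence containing $R$, hence contains the congruence generated by $R$). Everything else is a routine assembly of the universal properties of free products and of quotient monoids.
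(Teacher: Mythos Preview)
Your argument is correct and is essentially the same as the paper's proof: both verify that the defining relations of the graph product are respected and then invoke the universal property of the quotient. The only cosmetic difference is that you phrase this via the universal property of the free product $\prod^{\star} M_v$, whereas the paper fixes a presentation $\langle A_v \mid R_v\rangle$ for each $M_v$ and works with the combined presentation $\langle A \mid R\rangle$; these are two ways of packaging the same step.
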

\begin{proof}
For each $v\in V$, let $\langle A_v \mid R_v\rangle$ be a presentation
for $M_v$, and let $\langle A \mid R \rangle$ be the presentation for
$M$ as at the beginning of the section. Let $\h:A\to N$ be the function
given by $a\h = a\f_v$ where $M_v$ is the unique monoid containing $a$.
Since each $\f_v$ is a homomorphism, $\h$ respects the relations in each
$R_v$, and by hypothesis, $\h$ also respects all the other relations in
$R$. Hence there is a unique homomorphism $\f :M\to N$ which restricts
to $\h$ on $A$ and hence to $\f_v$ on each $M_v$.
\end{proof}

An immediate consequence is the first part of the following result.

\begin{prop} \label{homomorphisms2}
Let $\G$ be a graph, $V$ its set of vertices and 
$\{ M_v\}_{v\in V},\{ N_v\}_{v\in V}$ families of monoids. Let 
$M=\Gamma_{v\in V}M_v$ and $N=\Gamma_{v\in V}N_v$. Then,
given homomorphisms $\f_v:M_v \to N_v$ for each $v\in V$, there is
a unique homomorphism $\f : M \to N$ such that $m_v\f = m_v\f_v$ for all
$v\in V$.

Moreover, if each $\f_v$ is injective, then so is $\f$.
\end{prop}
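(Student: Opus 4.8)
The plan is to obtain the existence and uniqueness of $\f$ directly from Proposition~\ref{homomorphisms1}, and then to deduce injectivity by transporting reduced expressions through $\f$ and invoking Theorem~\ref{green}.

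For the first assertion I would apply Proposition~\ref{homomorphisms1} with target monoid $N=\Gamma_{v\in V}N_v$ itself, taking as the homomorphisms $M_v\to N$ the composites of $\f_v:M_v\to N_v$ with the canonical inclusions $N_v\hookrightarrow N$. The commutation hypothesis $(*)$ then holds automatically, since for $(u,v)\in E$ every element of $N_u$ commutes with every element of $N_v$ inside $N$ by the defining relations of the graph product. Proposition~\ref{homomorphisms1} supplies a unique $\f:M\to N$ restricting to $\f_v$ on each $M_v$, and the uniqueness in the present statement is then clear because the $M_v$ generate $M$.

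Now suppose each $\f_v$ is injective; I want to show $\f$ is. Write $X=\bigsqcup_v(M_v\setminus\{1\})$ and $Y=\bigsqcup_v(N_v\setminus\{1\})$ as in this section. Injectivity of $\f_v$ forces it to map $M_v\setminus\{1\}$ into $N_v\setminus\{1\}$, so the $\f_v$ assemble into an injection $\psi:X\to Y$ with $C(x\psi)=C(x)$ for every $x\in X$, and hence into an injection $\ol\psi:X^*\to Y^*$ on free monoids. If $e=x_1\circ\dots\circ x_n$ is a reduced expression for $a\in M$, then $e\ol\psi=(x_1\psi)\circ\dots\circ(x_n\psi)$ is an expression for $a\f$, because $\f$ is a homomorphism restricting to $\f_v$ on each $M_v$; and $e\ol\psi$ is again reduced, since the condition $(i)$ defining a reduced expression refers only to the sequence of $C$-values of the word, which $\ol\psi$ preserves componentwise.

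Finally, suppose $a\f=b\f$ and pick reduced expressions $e$ for $a$ and $f$ for $b$. Then $e\ol\psi$ and $f\ol\psi$ are reduced expressions for one and the same element of $N$, so by Theorem~\ref{green} some sequence of shuffles carries $e\ol\psi$ to $f\ol\psi$. The key point is that $\im\ol\psi$ is closed under shuffles: a shuffle merely transposes two adjacent components whose $C$-values are adjacent in $\Gamma$, so it sends $e'\ol\psi$ to $e''\ol\psi$ where $e''$ is obtained from $e'$ by transposing the corresponding components, and this transposition is a legitimate shuffle in $X^*$ precisely because $\psi$ preserves $C$. Hence the whole sequence stays inside $\im\ol\psi$, and applying the bijection $X^*\to\im\ol\psi$ backwards produces a sequence of shuffles from $e$ to $f$. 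So $e$ and $f$ are shuffle-equivalent reduced expressions, and Theorem~\ref{green} gives $a=b$. I expect the one genuinely delicate step to be this last argument --- pinning down that a shuffle is really a relation between $C$-labelled sequences, so that it both lifts and descends along the $C$-preserving bijection $\ol\psi$; everything else is routine verification.
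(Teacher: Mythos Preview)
Your proof is correct and follows essentially the same route as the paper's: the first part is obtained from Proposition~\ref{homomorphisms1}, and for injectivity one pushes reduced expressions for $a$ and $b$ forward through the $\f_v$, observes that the results are still reduced (by injectivity and preservation of $C$-values), invokes Theorem~\ref{green} to get shuffle equivalence in $N$, and then pulls the shuffle equivalence back to $M$. The paper compresses the last step into ``it is now clear'', whereas you spell out why shuffles lift along the $C$-preserving bijection; that extra care is reasonable but not a different idea.
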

\begin{proof}
All that remains is to prove the final paragraph. 
Let $a,b \in M$ with $a\f = b\f$ and suppose that $a,b$ have
reduced expressions $a_1 \circ \dots \circ a_m$ and $b_1 \circ \dots
\circ b_n$ respectively where $a_i \in M_{u_i}$ and $b_j \in M_{v_j}$.
Then 
$$  (a_1\f_{u_1}) \dots (a_m\f_{u_m}) = a\f = b\f = (b_1\f_{v_1}) \dots
(b_n\f_{v_n})$$
and since the $\f_v$ are injective, we have that both 
$(a_1\f_{u_1}) \circ \dots \circ (a_m\f_{u_m})$ and 
$(b_1\f_{v_1}) \circ \dots \circ (b_n\f_{v_n})$ are reduced expressions
for $a\f$. Hence they are shuffle equivalent so
that $m = n$ and for some permutation $\s$ we have 
$a_i\f_{u_i} = b_{i\s}\f_{v_{i\s}}$ for all $i$. Since $\im \f_v
\subseteq N_v$ for all $v$, we see that
 $u_i = v_{i\s}$ for each $i$, and so $a_i = b_{i\s}$  since
$\f_{u_i}$ is injective. It  is now clear 
that $a_1 \circ \dots \circ a_m$ and $b_1 \circ \dots \circ b_n$
are shuffle equivalent so that $a = b$ and hence $\f$ is injective.
\end{proof}

The following corollary, which can also be easily proved directly, is
now immediate. 
\begin{cor} \label{groupembedding}
Let $\Gamma$ be a graph with vertex set $V$. If for each $v\in V$, the 
monoid  $M_v$ is embeddable in a group $G_v$, then the graph product
$\Gamma M_v$ is embeddable in the group $\Gamma G_v$.
\end{cor}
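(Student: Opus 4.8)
The plan is to deduce the corollary from Proposition~\ref{homomorphisms2}, once one knows that a graph product of groups is again a group. For each $v\in V$ the given embedding is a homomorphism $\f_v\colon M_v\to G_v$, which is exactly a homomorphism of the form required in Proposition~\ref{homomorphisms2} (take the target components to be the $G_v$). That proposition then supplies a homomorphism $\f\colon\G_{v\in V}M_v\to\G_{v\in V}G_v$ with $m\f=m\f_v$ for all $m\in M_v$, and its final clause, together with the injectivity of each $\f_v$, shows that $\f$ is injective. Hence $\G_{v\in V}M_v$ embeds as a submonoid of $\G_{v\in V}G_v$.

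It then remains to observe that $N:=\G_{v\in V}G_v$ is a group, so that the statement really concerns an embedding into a group. By Theorem~\ref{green} each $G_v$ embeds in $N$, and $N$ is generated (as a monoid) by $\bigcup_{v\in V}G_v$. Since each $G_v$ is a group, every element of this generating set is a unit of $N$ (its inverse lying in the same component); as the units of a monoid are closed under multiplication and every element of $N$ is a product of such generators, every element of $N$ is a unit. Thus $N$ is a group, and the argument is complete.

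The only point that seems to need a comment is the implicit identification of $N$ --- the graph product of the $G_v$ formed, as throughout this paper, in the category of monoids --- with the graph product of the groups $G_v$ in the group-theoretic sense of \cite{green}. Having just checked that $N$ is a group, this identification is routine: restricting Proposition~\ref{homomorphisms1} to group targets shows that $N$ has the universal property characterising the group-theoretic graph product, so the two objects coincide. This is the only step that requires any thought, and I do not anticipate a genuine obstacle. Alternatively, as the authors remark, the corollary has a direct proof that avoids Propositions~\ref{homomorphisms1} and~\ref{homomorphisms2}: the canonical maps $M_v\to G_v$ carry a reduced expression over the $M_v$ to a reduced expression over the $G_v$ --- the defining condition constrains only the vertices $C(x_i)$, which are unchanged --- and Theorem~\ref{green} then shows that two reduced expressions over the $M_v$ have equal images precisely when they are shuffle equivalent, that is, precisely when they already represent the same element of $\G_{v\in V}M_v$.
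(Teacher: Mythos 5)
Your proposal is correct and follows essentially the same route as the paper: the corollary is deduced immediately from the injectivity clause of Proposition~\ref{homomorphisms2}, applied to the embeddings $M_v\to G_v$. The extra details you supply --- that the monoid graph product of the $G_v$ is a group (every generator is a unit) and that it agrees with the group-theoretic graph product --- are points the paper leaves implicit, and your verification of them is sound.
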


 In the next
section we use ideas about inverse hulls to demonstrate  another result about
the closure of a class of right cancellative monoids under graph
products. Specifically we  consider  right
cancellative monoids which satisfy the condition that
  the intersection of two principal left ideals is either
principal or empty.
A right cancellative monoid satisfying this condition is called a \textit{left
LCM monoid}. We show that a graph product 
of left LCM monoids is again a left LCM monoid. 

The reason for the terminology which is borrowed from ring theory is
that the defining condition may also be expressed in terms of 
divisibility. For a right cancellative monoid $C$ and $a,b\in C$, we say that $a$ is a
\textit{left multiple} of $b$ (and that $b$ is a \textit{right factor or divisor}
of $a$) if $a=cb$ for some $c\in C$. If $m$ is is a left multiple of
both  $b$ and $d$, we say it is 
a \textit{common left
multiple} of these elements, and such a common left multiple $m$ is
a \textit{least common left multiple} (LCLM) of $b$ and $d$  if 
every common left multiple of $b$ and $d$ is a left
multiple of $m$.  Equivalently, $m$ 
 is an LCLM of $b$ and $d$ if and only if 
$$ Cb \cap Cd = Cm.$$
Least common left multiples are sometimes known as left least common
multiples. We note that
 a left LCM monoid is a right cancellative monoid in which any two
elements having a common left multiple have an LCLM.

In ring theory (see \cite{beauregard}) an integral domain  (not
necessarily commutative) is called a \textit{left LCM domain} if the
intersection of any two principal left ideals is principal. Thus an
integral domain $R$ is a left LCM domain if and only if the cancellative
monoid of its non-zero elements is a left LCM monoid.

Similarly, one defines \textit{common right factors} and \textit{highest common right
factors}  (HCRF). An element $d$ of $C$ is an HCRF of $a$ and $b$ in $C$
if and only if $Cd$ is the least upper bound of $Ca$ and $Cb$ in the
partially ordered set of principal left ideals of $C$.

We remark that  LCLMs and HCRFs are not uniquely determined in general
being defined only up to
left multiplication by a unit. 

If $C$ is actually cancellative, \textit{common right multiple, common left
factor}, LCRM and HCLF are defined symmetrically.

Examples of right cancellative LCM monoids abound:  the
right locally Garside monoids of Dehornoy \cite{dehornoy1} which, as he
points out include all Artin monoids and all Garside monoids; from ring
theory, we have already mentioned the multiplicative monoid of non-zero elements of any LCM domain. 
 Examples of LCM monoids which are right cancellative
 but  not left cancellative are provided by principal left ideal
right cancellative  monoids; specific examples are the monoids of ordinal
numbers  less than $\omega^{\a}$  (where $\a$ is any ordinal number
greater than 1) under the dual of the usual
operation of ordinal addition.

\section{Inverse hulls} \label{inverse hulls}

With any right cancellative monoid $C$, one can associate an inverse monoid
called the inverse hull of $C$. Before giving the definition
we recall some of the basic concepts
of inverse monoids. For more on the general theory of inverse monoids
see \cite[Chapter~5]{howie} and \cite{lawson}.

An \emph{inverse monoid} is a monoid $M$ such that for all $a\in M$
there is a unique $b\in M$ such that $aba = a$ and $bab = b$. The
element $b$ is the \emph{inverse} of $a$ and is denoted by $a^{-1}$.
It is worth noting that $(a^{-1})^{-1} =a$ and $(ab)^{-1} =
b^{-1}a^{-1}$ for all $a,b \in M$.
The set of idempotents $E(M)$ of $M$ forms a commutative submonoid,
referred to as the \emph{semilattice of idempotents} of $M$. In fact, a
monoid $M$ is an inverse monoid if and only if $E(M)$ is a 
commutative submonoid and for every $a \in M$, there is an element $b\in
M$ such that $aba =a$ (that is, $M$ is regular). 

An \emph{inverse submonoid} of an inverse monoid $M$ is simply a
submonoid $N$
closed under taking inverses.

For a non-empty set $X$, a \emph{partial
permutation} is a bijection $\s:Y\to Z$ for some subsets $Y,Z$ of $X$.
We allow $Y$ and $Z$ to be empty so that the empty function is regarded
as a partial permutation. The set of all partial permutations of $X$ is made
into a monoid by using the usual rule for composition of partial
functions; it is called the \emph{symmetric
inverse monoid} on $X$ and denoted by $\II_X$. That it is an inverse
monoid follows from the fact that if $\s$ is a partial permutation of
$X$, then so is its inverse  (as a function) $\s^{-1}$, and this is the
inverse of $\s$ in $\II_X$ in the sense above. 
  The idempotents of $\II_X$ are
 the \emph{partial identities} $\e_Y^{}$ for all subsets $Y$ of $X$
where $\e_Y^{}$ is the identity map on the subset $Y$.
It is clear that, for $Y,Z \subset X$, we have 
$\e_Y^{}\e_Z^{} = \e_{Y\cap Z}^{}$ and hence that $E(\II_X)$ is
isomorphic to the Boolean algebra of
all subsets of $X$.  

The concept of an inverse hull was introduced by Rees \cite{rees1} to
give an alternative proof of Ore's theorem about the existence of a
group of fractions of a left (or right) Ore cancellative monoid $C$. The
name was introduced in \cite{cp61}, where
the inverse hull of a right cancellative semigroup $C$ is defined. 
A detailed study of the inverse hull is carried out in
\cite{cherubini} where the authors use a definition slightly different
from that in \cite{cp61}. However,  the
two definitions coincide in the case of inverse hulls of right
cancellative monoids, the
only case that we consider. 

After defining what we mean by an inverse hull and recalling some general results,  we
show that a graph product of left LCM monoids
is also a left LCM monoid, and continue
by finding a presentation for the inverse hull of a such a graph product
in terms of presentations for its constituent monoids.
 As a special case  we obtain a
presentation of the inverse hull of a graph monoid.

\subsection{Generalities about inverse hulls} \label{generalities}

As well as being significant in the question of
embeddability in a group, the inverse hull of a right cancellative semigroup is also
important in describing the structure of bisimple, 0-bisimple, simple
and 0-simple inverse semigroups.

Let $C$ be a right cancellative monoid. 
For an element $a$ of $C$, the mapping $\r_a$ with domain $C$ defined by
$$  x\r_a = xa$$
is the \textit{inner right translation} of $C$ determined by $a$. It is
injective  since $C$ is right cancellative, and so it can be regarded as a
member of $\II_C$.   The inverse submonoid of $\II_C$ generated by all the inner
right translations of $C$ is the \textit{inverse hull} $IH(C)$ of $C$.
The inverse of $\r_a$ is, of course, the partial map $\r_a^{-1}:Ca \to
C$, so if $C$ is not a group, then $IH(C)$ contains maps which are not
total. 

The mapping $\eta:C\to IH(C)$ given by $a\eta = \r_a$ is an embedding
of $C$ into $IH(C)$. Moreover, $C\eta$ is the right unit subsemigroup of
$IH(C)$, that is, it consists of those elements $\r \in IH(C)$ for
which there is an element $\t$ with $\r\t = 1_C$. The group of units of
$IH(C)$ is $G\eta$ where $G$ is the group of units of $C$. The left unit
submonoid $L$ of $IH(C)$ consists of the elements $\r_c^{-1}$ for $c\in
C$. For notational convenience, we introduce a left cancellative monoid
$C^{-1}$ containing $G$ as its group of units and such that there is an
anti-isomorphism $c\mapsto c^{-1}$ from $C$ to $C^{-1}$. Here if 
$c\in G$, then $c^{-1}$ is its inverse in $G$, and if $c \notin G$, then
$c^{-1}$ is a new symbol. We can now extend $\eta$ from $G$ to an
isomorphism, also denoted by $\eta$, from $C^{-1}$ to $L$ given by
$c^{-1}\eta = \r_c^{-1}$.

We remark that if $C$ is a group, then every inner right
translation is a permutation of $C$ and $\eta$ is just the Cayley
representation of  $C$.

The empty mapping $\emptyset$  is sometimes a member of $IH(C)$. When it is, it is the
zero of $IH(C)$.  For ease of
expression of some results, we often state them in terms of $IH^0(C)$,
where we define $IH^0(C)$ to be the submonoid $IH(C)\cup \{\emptyset\}$
of $\II_C$.

Clearly, if $a_1,\dots,a_n,b_1,\dots, b_n$ are elements of $C$, then $\r
= \r_{a_1}\r_{b_1}^{-1}\dots \r_{a_n}\r_{b_n}^{-1}$ is a member of
$IH(C)$. It is easy to verify that every element of $IH(C)$ can be
expressed in this way (see \cite[Lemma~2.5]{cherubini}) using the fact
that if $a,b\in C$, then $\r_a\r_b =\r_{ab}$ and $\r_a^{-1}\r_b^{-1} =
\r_{ba}^{-1}$.  Thus every element can be written in the form 
$(a_1^{}\eta)(b_1^{-1}\eta)\dots (a_n^{}\eta)(b_n^{-1}\eta)$.

It is noted in \cite{cp61} that the inverse hull of an infinite cyclic monoid
$\{ x\}^*$ is the bicyclic monoid. 
This example was generalised by  Nivat and Perrot in \cite{np} where
they introduced  polycyclic monoids as the inverse hulls of free monoids.
They give several characterisations of polycyclic monoids, and in
particular, show that the polycyclic monoid $P_X$ on a set $X$
with more than one element has
the following  presentation as a monoid with zero:
 $$ \langle X \cup X^{-1} \mid xx^{-1}=1,xy^{-1}=0 \text{ for } x\neq
y\ (x,y\in X)\rangle.$$
More information on polycyclic monoids can be found in
\cite[Chapter~9]{lawson} and \cite{meakinsapir}.

An independent study of the inverse hull of the free monoid on an
arbitrary nonempty set $X$ was carried out in \cite{knox} where Knox
describes it as a  Rees quotient of a semidirect product of a
semilattice by the free group on $X$.

Further examples of inverse hulls are calculated in \cite{mcmc}.

We recall that a compatible partial  order called the natural partial order is
defined on any inverse semigroup $S$ by the rule that $a\leq b$ if $a
=eb$ for some idempotent $e$. For later use, we characterise this
relation between certain elements of an inverse hull in the following
well known lemma. See \cite{lawson2} for a version of this and its corollary.

\begin{lem} \label{leq}
Let $C$ be a right cancellative monoid and let $a,b,c,d\in C$. Then in
$IH(C)$, 
$$ \r_a^{-1}\r_b \leq \r_c^{-1}\r_d \text{ if and only if } a=xc \text{ and } b=xd
\text{ for some } x\in C.$$ 
\end{lem}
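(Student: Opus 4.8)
The plan is to prove both implications directly from the definition of the natural partial order, using the multiplication rules $\r_a\r_b = \r_{ab}$ and $\r_a^{-1}\r_b^{-1} = \r_{ba}^{-1}$ recalled above, together with the explicit description of idempotents in $\II_C$ as partial identities $\e_Y$ with $\e_Y\e_Z = \e_{Y\cap Z}$.

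For the ``if'' direction, suppose $a = xc$ and $b = xd$ for some $x \in C$. The idea is to exhibit an idempotent $e$ with $\r_a^{-1}\r_b = e\,(\r_c^{-1}\r_d)$. The natural candidate is $e = \r_c^{-1}\r_c$, which is the partial identity on the image of $\r_c^{-1}$, i.e.\ on $Cc$. Then $e\,\r_c^{-1}\r_d = \r_c^{-1}\r_c\r_c^{-1}\r_d = \r_c^{-1}\r_d$, which is not immediately what we want; instead I would work from the other side, writing $\r_a^{-1}\r_b = \r_{xc}^{-1}\r_{xd} = (\r_c^{-1}\r_x^{-1})(\r_x\r_d) = \r_c^{-1}(\r_x^{-1}\r_x)\r_d$. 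Now $\r_x^{-1}\r_x$ is the partial identity $\e_{Cx}$ on $Cx$, an idempotent; since idempotents in an inverse monoid commute and since $\r_c^{-1}$ conjugates idempotents to idempotents, we have $\r_c^{-1}\e_{Cx}\r_d = \bigl(\r_c^{-1}\e_{Cx}\r_c\bigr)\r_c^{-1}\r_d$, and $\r_c^{-1}\e_{Cx}\r_c$ is an idempotent $f$. Hence $\r_a^{-1}\r_b = f\,\r_c^{-1}\r_d \leq \r_c^{-1}\r_d$, as required. (Alternatively, one can simply verify set-theoretically inside $\II_C$ that the partial map $\r_a^{-1}\r_b$ is a restriction of $\r_c^{-1}\r_d$; this is arguably cleaner and avoids the idempotent juggling.)

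For the ``only if'' direction, suppose $\r_a^{-1}\r_b \leq \r_c^{-1}\r_d$ in $IH(C)$. I would argue directly with partial maps in $\II_C$. The map $\r_c^{-1}\r_d$ sends $y \mapsto y'$ whenever $y = y_0 c$ and $y' = y_0 d$ for (the necessarily unique, by right cancellativity) $y_0 \in C$; its domain is $Cc$ and it acts by ``divide on the right by $c$, then multiply on the right by $d$''. Similarly $\r_a^{-1}\r_b$ has domain $Ca$ and sends $za \mapsto zb$. Since $\r_a^{-1}\r_b$ is a restriction of $\r_c^{-1}\r_d$ (this is what $\leq$ means: $\r_a^{-1}\r_b = e\,\r_c^{-1}\r_d$ for an idempotent $e$, and idempotents of $\II_C$ are partial identities, so composing with $e$ just restricts the domain), in particular $\dom(\r_a^{-1}\r_b) \subseteq \dom(\r_c^{-1}\r_d)$, i.e.\ $Ca \subseteq Cc$. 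Now $1\cdot a = a \in Ca \subseteq Cc$, so $a = xc$ for some $x \in C$. Applying $\r_c^{-1}\r_d$ to $a = xc$ gives $xd$; applying $\r_a^{-1}\r_b$ to $a = 1\cdot a$ gives $b$; and since the two maps agree on $\dom(\r_a^{-1}\r_b) \ni a$, we get $b = xd$. This completes the proof.

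I expect the only mildly delicate point to be making precise the claim ``$\leq$ means restriction of partial maps'' in $\II_C$ — that is, that for $\s, \t \in \II_C$ one has $\s \leq \t$ (in the natural partial order) if and only if $\s$ is the restriction of $\t$ to some subset of $\dom(\t)$, equivalently $\s = \t|_{\dom(\s)}$ and $\dom(\s) \subseteq \dom(\t)$. This is a standard fact about symmetric inverse monoids, following immediately from the description of idempotents as partial identities, and may be quoted from \cite[Chapter~5]{howie} or \cite{lawson}; once it is in hand both directions are the short computations sketched above. No essential use of left cancellativity or of the LCM hypothesis is needed here — the lemma is about a general right cancellative monoid.
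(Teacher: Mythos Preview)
Your proposal is correct and follows essentially the same approach as the paper. For the ``only if'' direction you argue exactly as the paper does (evaluate both partial maps at $a$), and for the ``if'' direction you use the same key factorization $\r_a^{-1}\r_b = \r_c^{-1}(\r_x^{-1}\r_x)\r_d$; the only difference is that you then take an extra step to explicitly exhibit an idempotent $f$ with $\r_a^{-1}\r_b = f\,\r_c^{-1}\r_d$, whereas the paper simply writes $\r_c^{-1}\r_x^{-1}\r_x\r_d \leq \r_c^{-1}\r_d$ directly (using that $\r_x^{-1}\r_x \leq 1$ and compatibility of the natural order with multiplication).
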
 
\begin{proof}
If $\r_a^{-1}\r_b \leq \r_c^{-1}\r_d$, then $a\in \dom \r_a^{-1}\r_b$,
so $a\in \dom \r_c^{-1}\r_d$, that is, $a\in Cc$, say $a=xc$. Then 
$$ b= a\r_a^{-1}\r_b = a\r_c^{-1}\r_d = xd.$$

Conversely, $$\r_a^{-1}\r_b = \r_c^{-1}\r_x^{-1}\r_x\r_{d} \leq 
\r_c^{-1}\r_d.$$ 
\end{proof}

\begin{cor} \label{equal}
Let $C$ be a right cancellative monoid and let $a,b,c,d\in C$. Then in
$IH(C)$, 
$$ \r_a^{-1}\r_b = \r_c^{-1}\r_d \text{ if and only if } a=uc \text{ and } b=ud
\text{ for some unit } u\in C.$$ 
\end{cor}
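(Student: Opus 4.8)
The plan is to deduce Corollary~\ref{equal} directly from Lemma~\ref{leq} by applying the lemma in both directions, using the fact that in an inverse semigroup the natural partial order is antisymmetric. Specifically, $\r_a^{-1}\r_b = \r_c^{-1}\r_d$ holds if and only if both $\r_a^{-1}\r_b \leq \r_c^{-1}\r_d$ and $\r_c^{-1}\r_d \leq \r_a^{-1}\r_b$. By Lemma~\ref{leq}, the first inequality gives some $x \in C$ with $a = xc$ and $b = xd$, and the second gives some $y \in C$ with $c = ya$ and $d = yb$. Substituting, we get $a = xc = xya$, so $a = (xy)a$; here I would like to cancel $a$ on the right to conclude $xy = 1$, but right cancellativity only allows cancelling $a$ when it appears as a right factor on \emph{both} sides — which it does: $a = (xy)a$ versus $a = 1 \cdot a$, so $xy = 1$. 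Symmetrically $yx = 1$, hence $x$ is a unit with inverse $y$. Setting $u = x$ gives $a = uc$ and $b = ud$ with $u$ a unit, as required.

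For the converse direction, suppose $a = uc$ and $b = ud$ for some unit $u \in C$. Then $\r_a = \r_{uc} = \r_u \r_c$ and $\r_b = \r_u \r_d$, so $\r_a^{-1}\r_b = \r_c^{-1}\r_u^{-1}\r_u\r_d$. Since $u$ is a unit, $\r_u$ is a permutation of $C$ (its domain and image are both all of $C$, because $x \mapsto xu$ is a bijection when $u$ is invertible), so $\r_u^{-1}\r_u = 1_C$, the identity of $IH(C)$. Therefore $\r_a^{-1}\r_b = \r_c^{-1}\r_d$. Alternatively, one can simply observe that $a = uc$, $b = ud$ gives both $\r_a^{-1}\r_b \leq \r_c^{-1}\r_d$ (via Lemma~\ref{leq} with $x = u$) and $c = u^{-1}a$, $d = u^{-1}b$ gives $\r_c^{-1}\r_d \leq \r_a^{-1}\r_b$ (via Lemma~\ref{leq} with $x = u^{-1}$), whence equality by antisymmetry of $\leq$.

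The only mild subtlety — and the step I would be most careful about — is the cancellation argument showing $xy = 1$ and $yx = 1$: one must check that right cancellativity genuinely applies, i.e. that we are cancelling a common right factor, and that we do not accidentally assume $C$ is left cancellative or a group. The identity $a = (xy)a$ with $a = 1 \cdot a$ does let us cancel $a$ on the right to get $xy = 1$, and likewise $c = (yx)c = 1 \cdot c$ gives $yx = 1$, so $x, y$ are mutually inverse units. Everything else is a routine unwinding of Lemma~\ref{leq} and the definition of the natural partial order.
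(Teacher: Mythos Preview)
Your proof is correct and follows essentially the same approach as the paper: apply Lemma~\ref{leq} in both directions to obtain $x,y\in C$ with $a=xc$, $b=xd$, $c=ya$, $d=yb$, then use right cancellation on $a=xya$ and $c=yxc$ to conclude that $x$ is a unit. The paper's version is more terse---it omits the converse direction and leaves the deduction of $yx=1$ implicit---but the argument is the same.
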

\begin{proof}
By Lemma~\ref{leq}, there are elements $x,y\in C$ such that $a=xc,\, 
b=xd,\,  c=ya$ and $d=yb$. Hence $a=xya$ and by right cancellation,
$1=xy$. It follows that $x$ and $y$ are units. 
\end{proof}

Recall that in any monoid $M$, Green's relation $\mathscr{R}$ is defined
by the rule that $a\mathscr{R} b$ if and only if $aM = bM$. The relation
$\mathscr{L}$ is the left-right dual of $\mathscr{R}$; we define 
$\mathscr{H} = \mathscr{R} \cap \mathscr{L}$ and $\mathscr{D} =
\mathscr{R}\ \vee \mathscr{L}$. In fact, by \cite[Proposition~2.1.3]{howie}, 
$\mathscr{D} = \mathscr{R} \circ \mathscr{L} = \mathscr{L} \circ \mathscr{R}$.
Finally, $a\J b$ if and only if $MaM =
MbM$. In an inverse monoid, $a\mathscr{R} b$ if and only if $aa^{-1} =
bb^{-1}$ and similarly, $a\mathscr{L}b$ if and only if $a^{-1}a =
b^{-1}b$. In $\II_X$, we have $\r\R\s$ if and only if $\dom \r = \dom
\s$, and $\r\L\s$ if and only if $\im \r = \im \s$
\cite[Exercise~5.11.2]{howie}. The following lemma thus follows immediately
from \cite[Proposition~3.2.11]{lawson}.

\begin{lem}
Let $C$ be a right cancellative monoid. Then, for elements  $\r,\s$ of
$IH^0(C)$, 
\begin{enumerate}
\item [(1)]  $\r\R\s$ in $IH^0(C)$ if and only if $\dom \r = \dom \s$,
\item [(2)] $\r\L\s$ in $IH^0(C)$ if and only if $\im \r = \im \s$.
\end{enumerate}   
\end{lem}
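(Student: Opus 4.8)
The plan is to reduce the statement entirely to standard facts about the symmetric inverse monoid $\II_C$ together with the two characterisations of Green's relations recalled just above. Since $IH^0(C)$ is by definition a submonoid of $\II_C$, the only content is to check that the restriction of $\R$ (respectively $\L$) from $\II_X$ to a suitable inverse submonoid behaves correctly, and this is exactly what \cite[Proposition~3.2.11]{lawson} provides: in any inverse monoid, and more generally whenever one passes to an inverse submonoid, Green's $\R$ and $\L$ relations are determined by $aa^{-1}$ and $a^{-1}a$, and these are detected on the level of domains and images.

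First I would record that for $\r \in IH^0(C) \sub \II_C$, the idempotent $\r\r^{-1}$ is the partial identity $\e_{\dom \r}$ and $\r^{-1}\r$ is the partial identity $\e_{\im \r}$; this is immediate from the description of partial permutations and their inverses given in the text. Next, in an inverse monoid $\r \R \s$ iff $\r\r^{-1} = \s\s^{-1}$ and $\r \L \s$ iff $\r^{-1}\r = \s^{-1}\s$, as recalled above. Combining these two observations, $\r \R \s$ in $IH^0(C)$ iff $\e_{\dom \r} = \e_{\dom \s}$, and since distinct subsets of $C$ give distinct partial identities (the text notes $E(\II_X)$ is the Boolean algebra of subsets of $X$), this holds iff $\dom \r = \dom \s$; dually for $\L$ and images. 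That is the whole argument, and it is precisely the specialisation of \cite[Proposition~3.2.11]{lawson} combined with the descriptions of $\R,\L$ in $\II_X$ given in \cite[Exercise~5.11.2]{howie}.

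The one genuine subtlety — and the only place where care is needed — is that Green's relations in a submonoid need not coincide with those induced from the ambient monoid: $a \R b$ in $IH^0(C)$ requires the conjugating elements to lie in $IH^0(C)$, not merely in $\II_C$. The reason there is no obstacle here is that inverse semigroups are special: if $\r,\s$ lie in an inverse submonoid $N$ and $\r\r^{-1} = \s\s^{-1}$, then $\r = (\r\r^{-1})\s = (\s\s^{-1})\s$ exhibits $\r \leq_{\R} \s$ within $N$ using the element $\r\r^{-1} \in N$ (idempotents of $N$ are available since $N$ is inverse), and symmetrically for $\s \leq_{\R} \r$, so $\R^{N}$ agrees with the $\r\r^{-1}$-test; this is exactly the content of the cited proposition, so I would simply invoke it rather than reprove it. Hence the lemma follows by one line: apply \cite[Proposition~3.2.11]{lawson} to the inverse monoid $IH^0(C)$ and translate $\r\r^{-1},\r^{-1}\r$ into $\e_{\dom\r},\e_{\im\r}$.

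Thus the proof I would write is essentially the one-sentence deduction already flagged in the excerpt (``The following lemma thus follows immediately from \cite[Proposition~3.2.11]{lawson}''), perhaps padded out with the identification of $\r\r^{-1}$ with the partial identity on $\dom\r$ for the reader's convenience. No step is a real obstacle; if I had to name the ``hard'' part it would be nothing more than remembering that one must justify that the ambient-versus-submonoid distinction is harmless, which the cited proposition already handles.
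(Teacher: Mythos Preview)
Your approach is correct and is exactly the paper's own: the paper gives no argument beyond the sentence ``The following lemma thus follows immediately from \cite[Proposition~3.2.11]{lawson}'', and your proposal is precisely this citation together with the identification of $\rho\rho^{-1}$ and $\rho^{-1}\rho$ with the partial identities on $\dom\rho$ and $\im\rho$.

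One small slip worth fixing in your parenthetical justification: the displayed equation $\rho = (\rho\rho^{-1})\sigma = (\sigma\sigma^{-1})\sigma$ is not right, since $(\sigma\sigma^{-1})\sigma = \sigma$, and this would force $\rho = \sigma$. The intended computation is $\rho = (\rho\rho^{-1})\rho = (\sigma\sigma^{-1})\rho \in \sigma N$, which shows $\rho N \subseteq \sigma N$ inside $N$; symmetrically $\sigma \in \rho N$. Since you say you would cite the proposition rather than reprove it, this typo does not affect your actual proposed proof.
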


We mention that $\L$ is a right congruence and
$\R$ is a left congruence. More information on Green's relations can be found in
\cite{howie,lawson}. Finally, an inverse monoid (or semigroup) is
$0$-\textit{bisimple} if all its non-zero elements are $\D$-related; it
is \textit{bisimple} if all its elements are $\D$-related. Thus if $a,b$
are nonzero elements of a $0$-bisimple inverse monoid $M$, then there are
elements $c,d\in M$ such that $a\L c\R b$ and $a\R d \L b$.

In \cite{np}, it is pointed out that the equivalence of $(1)$ and $(3)$
in the next proposition can be obtained by slightly modifying the theory
of Clifford \cite{clifford}. 
A proof of the whole result can be extracted from \cite{mcal}, but for
the convenience of the reader and completeness we give an elementary proof.

\begin{prop} \label{0-bisimple}
The following are equivalent for a right cancellative monoid $C$:
\begin{enumerate}
\item [$(1)$]  $IH^0(C)$ is $0$-bisimple,
\item [$(2)$] The domain of each non-zero element of $IH^0(C)$ is a
principal left ideal,
\item [$(3)$] $C$ is a left LCM monoid,
\item [$(4)$]  Every non-zero element of $IH^0(C)$ can be written in the form
$\r_c^{-1}\r_d$ for some $c,d\in C$.
\end{enumerate}
\end{prop}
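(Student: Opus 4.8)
The plan is to prove the four statements equivalent by establishing the cycle $(3)\Rightarrow(4)\Rightarrow(2)\Rightarrow(1)\Rightarrow(3)$. Throughout I would use only facts about $IH^0(C)$ already assembled above: the identities $\r_a\r_b=\r_{ab}$ and $\r_a^{-1}\r_b^{-1}=\r_{ba}^{-1}$; the fact that $\r_a$ is the total injection $x\mapsto xa$ of $C$ with image $Ca$, so that $\r_a^{-1}$ is the partial map $xa\mapsto x$ with domain $Ca$; and the descriptions that the idempotents of $IH^0(C)$ are exactly the partial identities, with $\r\R\s$ iff $\dom\r=\dom\s$ and $\r\L\s$ iff $\im\r=\im\s$.

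The heart of the argument is $(3)\Rightarrow(4)$. The key computation is that for $b,c\in C$ one has $\r_b\r_c^{-1}=\r_p^{-1}\r_q$ whenever $b$ and $c$ have a common left multiple and $Cb\cap Cc=Cm$ with $m=pb=qc$ an LCLM, while $\r_b\r_c^{-1}=\emptyset$ if $b$ and $c$ have no common left multiple; this is verified directly by computing the domain $\{x:xb\in Cc\}$ and the values of $\r_b\r_c^{-1}$, using right cancellation. Granting this, I would write an arbitrary non-zero element of $IH(C)$ as $\r_{a_1}\r_{b_1}^{-1}\cdots\r_{a_n}\r_{b_n}^{-1}$, use $\r_1=1$ to rewrite it as a product of $n+1$ blocks each of the shape $\r_x^{-1}\r_y$, namely $(\r_1^{-1}\r_{a_1})(\r_{b_1}^{-1}\r_{a_2})\cdots(\r_{b_{n-1}}^{-1}\r_{a_n})(\r_{b_n}^{-1}\r_1)$, and then repeatedly collapse a product of two consecutive blocks using the displayed identities: $\r_x^{-1}\r_y\cdot\r_z^{-1}\r_w=\r_x^{-1}(\r_y\r_z^{-1})\r_w=\r_x^{-1}\r_p^{-1}\r_q\r_w=\r_{px}^{-1}\r_{qw}$ (or $\emptyset$, if $y$ and $z$ have no common left multiple). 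Induction on $n$ then reduces the whole expression to $\r_c^{-1}\r_d$ or to $\emptyset$, and since the element was assumed non-zero this gives $(4)$.

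The remaining implications are short. For $(4)\Rightarrow(2)$: $\dom(\r_c^{-1}\r_d)=Cc$, a principal left ideal. For $(2)\Rightarrow(1)$: a non-zero idempotent $e$ is the partial identity on $\dom e$, which by $(2)$ equals some $Cc$; since $\r_c^{-1}\r_c$ is the partial identity on $Cc$ while $\r_c\r_c^{-1}$ is the identity $1$, we have $1\R\r_c\L e$, hence $e\,\D\,1$; so every non-zero idempotent is $\D$-related to $1$, and as $\r\R\r\r^{-1}$ for all $\r$, every non-zero element of $IH^0(C)$ is $\D$-related to $1$, giving $0$-bisimplicity. For $(1)\Rightarrow(3)$: if $b,d\in C$ have a common left multiple then $e=(\r_b^{-1}\r_b)(\r_d^{-1}\r_d)$ is the non-zero partial identity on $Cb\cap Cd$; by $(1)$, $e\,\D\,1$, so there is $t\in IH(C)$ with $tt^{-1}=1$ and $t^{-1}t=e$; the first equation forces $\dom t=C$ and exhibits $t$ as a right unit of $IH(C)$, so $t=\r_a$ for some $a\in C$, and then the second equation gives $Cb\cap Cd=\im t=Ca$; thus $Cb\cap Cd$ is principal whenever non-empty, i.e.\ $C$ is a left LCM monoid.

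The main obstacle is the computation underlying $(3)\Rightarrow(4)$: correctly identifying $\r_b\r_c^{-1}$ with $\r_p^{-1}\r_q$ through a least common left multiple, and then the induction that drives a general alternating product of inner right translations and their inverses down to the two-factor normal form $\r_c^{-1}\r_d$. Once the domain/image dictionary for $IH^0(C)$ and the description of $C\eta$ as the right-unit submonoid are in hand, $(4)\Rightarrow(2)\Rightarrow(1)\Rightarrow(3)$ are essentially bookkeeping.
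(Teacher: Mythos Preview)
Your proposal is correct and essentially matches the paper's argument. The paper runs the cycle in the opposite direction, $(1)\Rightarrow(2)\Rightarrow(3)\Rightarrow(4)\Rightarrow(1)$, but the substantive content is identical: the same key computation $\r_b\r_c^{-1}=\r_p^{-1}\r_q$ via an LCLM drives $(3)\Rightarrow(4)$, and the remaining steps in both versions are the same bookkeeping with domains, the $\R/\L$ description, and the identification of the right unit submonoid with $C\eta$.
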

\begin{proof}
Suppose that $(1)$ holds, and let $\r$ be a non-zero element of
$IH^0(C)$. Then $\r$ is $\D$-related to the identity, and so
$\R$-related to an element $\s$ of the left unit submonoid. Hence
$\dom\r= \dom\s$ and since  $\s
=\r_a^{-1}$ for some $a\in C$, we have $\dom\r = Ca$ so that $(2)$ holds.

If $(2)$ holds, and $a,b\in C$, then since $Ca\cap Cb$ is the
domain of $\r_a^{-1}\r_a\r_b^{-1}\r_b$, we see that $Ca \cap Cb$ is
either principal or empty. Thus $(3)$ holds.

Now suppose that $(3)$ holds and let $\r$ be a non-zero element of
$IH^0(C)$. We have noted that 
$\r = \r_{a_1}\r_{b_1}^{-1}\dots \r_{a_n}\r_{b_n}^{-1}$ for some
$a_i,b_i\in C$, and so it is enough to show that if $c,d\in C$ and
$\r_c\r_d^{-1}$ is non-zero, then for some $a,b\in C$ we have 
$\r_c\r_d^{-1} = \r_a^{-1}\r_b$. Now the domain of $\r_c\r_d^{-1}$ is 
$(Cc \cap Cd)\r_c^{-1}$, and by assumption, $Cc \cap Cd = Cs$ for some $s\in
C$. Thus $s=rc=td$ for some $r,t\in C$ and an easy calculation shows
that $\r_c\r_d^{-1} =  \r_r^{-1}\r_t$.

Finally, if $(4)$ holds, let $\r = \r_a^{-1}\r_b$ be a non-zero element
of  $IH^0(C)$. Now $\r_a^{-1}$ is $\L$-related to the identity, and
since $\L$ is a right congruence, we get $\r\L\r_b$. But $\r_b\R1$, so
$\r$ is $\D$-related to the identity, and $(1)$ follows.
\end{proof}

It is worth noting that if $C$ is a left LCM monoid, then the product of 
two non-zero elements in $IH^0(C)$ is given by
$$ (\r_a^{-1}\r_b)(\r_c^{-1}\r_d) = 
\begin{cases} 0 \qquad \qquad \qquad \: \text{ if } Cb \cap Cc = \emptyset\\
             \r_{sa}^{-1}\r_{td} \qquad \qquad \text{ if }  Cb \cap Cc =  Csb = Ctc.
   \end{cases}
$$

Although it is not relevant to the present paper, it is worth noting
that every $0$-bisimple inverse monoid $M$ is isomorphic to $IH^0(C)$ where
$C$ is the right unit submonoid of $M$ \cite{np}, so that the
preceding proposition applies to all such monoids. 
We make use of the proposition to prove the next theorem for
which we
also need the following lemma.

\begin{lem} \label{intersection1}
Let $\G = (V,E)$ be a graph and, for each $v\in V$, let $C_v$ be a right
cancellative monoid, and $C=  \G_{v\in V}C_v$. 
Let $c,d$ be nonunits in $C_v,C_u$ respectively
where $(u,v)\in E$. Then 
$$ Cc \cap Cd = Ccd.$$
\end{lem}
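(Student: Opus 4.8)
The plan is to prove the two inclusions $Ccd \subseteq Cc \cap Cd$ and $Cc \cap Cd \subseteq Ccd$ separately. The first is essentially immediate: since $(u,v) \in E$, the generators of $C_u$ and $C_v$ commute in $C$, so in particular $cd = dc$ in $C$, whence $ccd = (cd)c \in Cc$ — wait, more directly: $cd \in Cc$ trivially, and $cd = dc \in Cd$, so $cd \in Cc \cap Cd$, and therefore $Ccd \subseteq Cc \cap Cd$. The work is all in the reverse inclusion.

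For the reverse inclusion, suppose $a \in Cc \cap Cd$, say $a = pc = qd$ for some $p, q \in C$. The idea is to analyse reduced expressions using the machinery of final components developed in Proposition~\ref{comp} and Lemma~\ref{component lemma}. Write $p$ in terms of its final $v$-component and final $v$-complement, and similarly analyse $q$ with respect to its final $u$-component. By Lemma~\ref{component lemma}, $a = pc$ has a controlled final $v$-component (namely $d'c$ where $d'$ is the final $v$-component of $p$), and $a = qd$ has a controlled final $u$-component. Here the key point is that $c$ is a \emph{nonunit} of $C_v$, so $d'c \neq 1$ and $a$ genuinely has a final $v$-component ending in $c$; similarly for $d$. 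Since $(u,v) \in E$, a reduced expression for $a$ witnessing the final $v$-component can, after shuffling the final $v$-component $c$ past the final $u$-component $d$ (which are adjacent and hence commute), be rearranged to exhibit both: $a$ has a reduced expression of the form $e_1 \circ \dots \circ e_k \circ d \circ c$ (or $\circ\, c \circ d$), where $e_1 \cdots e_k$ is a common left complement. Setting $b = e_1 \cdots e_k$ we then get $a = bdc = b(cd) \in Ccd$ (using $dc = cd$), which is what we want.

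The step I expect to be the main obstacle is making rigorous the claim that a single reduced expression for $a$ can be found that simultaneously displays $c$ as (part of) the final $v$-component and $d$ as (part of) the final $u$-component, sitting adjacently at the end. One has to argue carefully from Proposition~\ref{comp}: $a$ has a \emph{unique} final $v$-component and a unique final $u$-component. If the final $v$-component of $a$ equalled $c$ exactly and the final $u$-component equalled $d$ exactly, one would like to conclude these occur as the last two letters of some reduced expression; but this requires knowing that the $v$-component and $u$-component can be brought together, which uses $(u,v) \in E$ together with the definition of reduced expression (condition $(i)$) — between the rightmost $v$-letter and the rightmost $u$-letter, everything must commute with both, by maximality, so they can all be shuffled out. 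A subtlety is that the final $v$-component of $a$ need not be exactly $c$; it is $d' c$ for the final $v$-component $d'$ of $p$, and likewise the final $u$-component of $a$ is $p' d$ where $p'$ is the final $u$-component of $q$. One resolves this by absorbing $d'$ and $p'$ into the common left complement $b$; the arithmetic $a = bdc$ then follows by tracking complements through Lemma~\ref{component lemma}, and right cancellativity of $C$ (Theorem~\ref{cancellative}) is used to identify the relevant complements. Finally $bdc = bcd$ gives $a \in Ccd$.
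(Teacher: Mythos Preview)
Your proposal is correct and follows essentially the same route as the paper: show $Ccd \subseteq Cc \cap Cd$ via $cd=dc$, then for $a=pc=qd$ use Lemma~\ref{component lemma} to see that $a$ has nontrivial final $v$-component $c'c$ and final $u$-component $d'd$, invoke shuffle equivalence (Theorem~\ref{green}) of two reduced expressions for $a$ to obtain a single reduced expression ending in $(c'c)\circ(d'd)$, and finish by rewriting $(c'c)(d'd)=c'd'cd$ using $cd'=d'c$. One small remark: the paper's argument does not need right cancellativity of $C$ at all, so your appeal to Theorem~\ref{cancellative} is superfluous---the ``absorbing'' step is purely the commutation $cd'=d'c$ inside the product, with no cancellation required.
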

\begin{proof}
Since $(u,v)\in E$, we have $cd = dc$ so that $Ccd \sub Cc \cap Cd$. Now
suppose that $a \in Cc \cap Cd$ so that $a = sc = td$ for some $s,t \in
C$. By Lemma~\ref{component lemma}, $a$ has final $v$-component $c'c$
and final $u$ component $d'd$ where $c'$ is the final $v$-component of
$s$ and $d'$ is the final $u$-component of $t$. Neither $c'c$ nor $d'd$
can be $1$ since $c,d$ are not units. Thus $a$ has reduced
expressions $x_1\circ \dots \circ x_n \circ (c'c)$ and $y_1 \circ \dots
\circ y_n \circ (d'd)$ which, by Theorem~\ref{green}, must be shuffle
equivalent. Hence one of the $x_i$, say $x_j$, must be $d'd$ and one can
shuffle it to the end to obtain a reduced expression
$$ x_1 \circ \dots \circ x_{j-1} \circ x_{j+1} \circ \dots \circ x_n \circ
(c'c) \circ (d'd)$$
for $a$. Hence $a = x_1 \dots  x_{j-1} x_{j+1} \dots x_n (c'c)(d'd)$,
and since $c\in C_v,d'\in C_u$ so that $cd' = d'c$ (as $(u,v)\in E$) we
have 
$$a = x_1 \dots  x_{j-1} x_{j+1} \dots x_n c'd'cd \in Ccd$$
completing the proof.
\end{proof}

\begin{thm} \label{Lpreserved}
Let $\G = (V,E)$ be a graph and, for each $v\in V$, let $C_v$ be a left
LCM monoid. Then the graph product
$C = \G_{v\in V}C_v$ is also a left LCM monoid.
\end{thm}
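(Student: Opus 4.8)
The plan is to verify, using that $C$ is right cancellative by Theorem~\ref{cancellative}, that $Cx\cap Cy$ is principal or empty for all $x,y\in C$; by the definition (equivalently, Proposition~\ref{0-bisimple}) this is exactly what ``left LCM'' demands. Everything rests on a routine \emph{peeling identity}: if $x=x_0c$, so that $c$ is a right factor of $x$, then for every $y\in C$
$$Cx\cap Cy=\bigl(Cx_0\cap\{z\in C:zc\in Cy\}\bigr)c,$$
and since $zc\in Cc$ always, $\{z:zc\in Cy\}=\{z:zc\in Cc\cap Cy\}$. Thus if $Cc\cap Cy$ is principal or empty, writing a generator as $\gamma c$ (it lies in $Cc$) gives $\{z:zc\in Cy\}=C\gamma$ and hence $Cx\cap Cy=(Cx_0\cap C\gamma)c$, which is principal or empty provided $Cx_0\cap C\gamma$ is. Both the identity and these manipulations are immediate from right cancellativity.

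The first step is to prove the case in which the left factor lies in a single component: \emph{for every vertex $v$, every $c\in C_v$ and every $b\in C$, the set $Cc\cap Cb$ is principal or empty}. I would prove this by induction on $\ell(b)$. If $c$ is a unit (in particular $c=1$), $Cc=C$; if $b=1$, $Cc\cap Cb=Cc$; so assume $c$ is a non-unit and $b\neq1$, and let $e$ and $b'$ be the final $v$-component and final $v$-complement of $b$ (Proposition~\ref{comp}), so $b=b'e$. Suppose first $e\neq1$, so $\ell(b')=\ell(b)-1$. For any $w=sc=tb$ in $Cc\cap Cb$, Lemma~\ref{component lemma} shows that the final $v$-component of $w$ is simultaneously an element of $C_vc$ (from $w=sc$) and of $C_ve$ (from $w=(tb')e$), and it is $\neq1$ since $c$ is not a unit of the right cancellative monoid $C_v$; hence if $C_vc\cap C_ve=\emptyset$ then $Cc\cap Cb=\emptyset$, while otherwise, $C_v$ being left LCM, $C_vc\cap C_ve=C_vf$ with $f=c_1c=e_1e$, $c_1,e_1\in C_v$. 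Writing $w$ as (its final $v$-complement)$\cdot$(its final $v$-component) and using that the latter lies in $C_vf$, one rearranges to $Cc\cap Cb=\{W\in C:We_1\in Cb'\}\,f$; since $e_1$ is a single letter (or a unit, or $1$) and $\ell(b')<\ell(b)$, the inductive hypothesis applied to $Ce_1\cap Cb'$ shows $\{W:We_1\in Cb'\}$ is principal or empty, and therefore so is $Cc\cap Cb$.

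The genuinely new case is $e=1$, where $b'=b$ and nothing shortens; here one argues directly with reduced expressions. If $w=sc=tb\in Cc\cap Cb$ then the final $v$-component of $w=sc$ is $\neq1$ (Lemma~\ref{component lemma}, $c$ a non-unit), so $w$ has a reduced expression ending in a $v$-letter. Looking at a reduced expression of $b$ witnessing that its final $v$-component is $1$: if it is of type (ii), containing a barrier letter non-adjacent to $v$ with no $v$-letter after it, then in $w=tb$ every $v$-letter is trapped behind that barrier and, by Theorem~\ref{green}, cannot be shuffled to the end — a contradiction; so $b$ is of type (i), carrying \emph{no} $v$-letter, and then the $v$-letter at the end of $w$, which must come from $t$, can be shuffled past all of $b$ only if every letter of $b$ is adjacent to $v$. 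Hence either some letter of $b$ is not adjacent to $v$, and then $Cc\cap Cb=\emptyset$, or $b$ commutes with all of $C_v$; in the latter case I claim $Cc\cap Cb=Ccb$. The inclusion $Ccb\subseteq Cc\cap Cb$ is immediate from $cb=bc$, and conversely, given $w=sc=tb$, since $b$ carries no $v$-letter and commutes with $C_v$ the final $v$-components of $t$ and of $tb=w$ coincide, so (again by Lemma~\ref{component lemma}) $c$ is a right factor of $t$, say $t=t''c$, whence $w=t''cb\in Ccb$. This establishes the single-component case.

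The theorem then follows by induction on $\ell(a)$: for $\ell(a)=0$, $Ca\cap Cb=Cb$; for $\ell(a)=1$, $a$ lies in a single component and the previous paragraph applies; for $\ell(a)\geq2$, write $a=a'd$ with $d$ the last letter of a reduced expression of $a$, so $\ell(a')=\ell(a)-1$, and apply the peeling identity, $Ca\cap Cb=(Ca'\cap\{z:zd\in Cb\})d$, where $\{z:zd\in Cb\}=\{z:zd\in Cd\cap Cb\}$ is principal or empty by the single-component case (with $d$ a single letter), equal to $C\gamma$ say, so $Ca\cap Cb=(Ca'\cap C\gamma)d$, principal or empty by the inductive hypothesis. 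Being also right cancellative, $C$ is then a left LCM monoid. I expect the main obstacle to be precisely the $e=1$ subcase above: it is the only branch that does not reduce to a strictly shorter instance, and dealing with it requires combining the uniqueness of final components (Proposition~\ref{comp}) with the shuffle-equivalence of reduced expressions (Theorem~\ref{green}) and the barrier condition built into the definition of the final $v$-component.
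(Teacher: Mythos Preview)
Your overall peeling strategy is sound, but the $e=1$ subcase has a genuine gap. The claim ``in $w=tb$ every $v$-letter is trapped behind that barrier and, by Theorem~\ref{green}, cannot be shuffled to the end'' tacitly assumes that the letters of $b$'s reduced expression persist in a reduced expression for $tb$. They need not: Theorem~\ref{green} concerns shuffles between \emph{reduced} expressions, whereas passing from the concatenation of reduced expressions for $t$ and $b$ to a reduced expression for $tb$ allows amalgamations, and a barrier letter that happens to be a non-identity unit in its component can be cancelled by a contribution from $t$. Concretely, take $V=\{u,v\}$ with no edge, $C_u$ any nontrivial group, $C_v$ free monogenic on $y$, and set $c=y$, $b=a\in C_u\setminus\{1\}$. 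Then $b$'s only reduced expression is the single barrier letter $a$ (type~(ii), and also failing your type~(i) adjacency test), so your argument outputs $Cc\cap Cb=\emptyset$; but $Cb=C$, hence $Cc\cap Cb=Cy$. The same failure occurs for non-unit $b$ containing a unit letter: with $V=\{u,v,w\}$, sole edge $(v,w)$, $C_u$ a group, $C_w$ free monogenic on $z$, and $b=az$, one has $Cb=Cz$ and $Cc\cap Cb=Cyz$ by Lemma~\ref{intersection1}, not $\emptyset$.

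The repair is to abandon the direct shuffling analysis in the $e=1$ case and instead peel the \emph{last} letter $b_m\in C_u$ off $b$ (after noting that if $b$ is a unit then $Cb=C$ trivially). If $b_m$ is a unit, $Cb=Cb_0$ with $\ell(b_0)<\ell(b)$ and induction applies. If $b_m$ is a non-unit with $u\neq v$ and $(u,v)\notin E$, then $Cc\cap Cb\subseteq Cc\cap Cb_m=\emptyset$. If $(u,v)\in E$, Lemma~\ref{intersection1} gives $Cc\cap Cb_m=Ccb_m$, so your own peeling identity yields $Cc\cap Cb=(Cc\cap Cb_0)\,b_m$ and induction applies. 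If $u=v$, the left LCM property of $C_v$ (lifted to $C$ exactly as in Lemma~\ref{intersection2}) handles $Cc\cap Cb_m$ and peeling again reduces $\ell$. This is, in effect, the paper's proof translated from the inverse hull into the language of ideals: the paper shows every nonzero $\rho_c\rho_d^{-1}$ equals some $\rho_a^{-1}\rho_b$, \emph{first disposing of units explicitly}, then treating length-$1$ non-units according to whether they lie in the same, adjacent, or non-adjacent components, and finally inducting on length. Your argument was missing precisely that explicit treatment of units.
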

\begin{proof}
We have that $C$ is right cancellative by Theorem~\ref{cancellative}. To
prove that $C$ is a left LCM monoid, we show that every non-zero element of $IH^0(C)$ can
be written in the form $\r_a^{-1}\r_b$ for some $a,b \in C$, and appeal
to Proposition~\ref{0-bisimple}.

We claim that if $c,d\in C$ and $\t = \r_c\r_d^{-1}$ is non-zero, then $\t=
\r_a^{-1}\r_b$ for some $a,b\in C$. 

The result follows from this claim and our earlier observation that every
non-zero element of $IH^0(C)$ can be written in the form 
$\r_{a_1}\r_{b_1}^{-1}\dots \r_{a_n}\r_{b_n}^{-1}$.

We note that the claim is true if one of $c,d$ is a unit: if $r=c^{-1}$
exists, then 
$$ \t = \r_{r^{-1}}\r_d^{-1} = \r_r^{-1}\r_d^{-1} = \r_{dr}^{-1} = \r_{dr}^{-1}\r_1,$$
and if $d$ is a unit, then
$$\r_c \r_d^{-1} = \r_c \r_{d^{-1}} = \r_{cd^{-1}} = \r_1^{-1}\r_{cd^{-1}}.$$

We now assume that $c,d$ are both nonunits and continue by proving the
claim in the case when $c$ has length 1, that
is, $c\in C_v$ for some $v\in V$. Suppose that
 $d$ has length 1. If $d\in
C_v$, then $\t = \r_{a}^{-1}\r_b$ since $C_v$ is a left LCM monoid.
Let $d\in C_u$ with $u \neq v$. 
If $(u,v) \notin E$;  then no reduced expression
 ending in $c$ is shuffle equivalent to one ending in $d$ and  it
follows that
$Cc \cap Cd = \emptyset$. Thus $\t = \emptyset$, a contradiction. Hence $(u,v) \in
E$ so that $cd = dc$. By Lemma~\ref{intersection1}, $Cc \cap Cd =
Ccd$. It follows that $\dom \r_c\r_d^{-1} = Cd = \dom \r_d^{-1}\r_c$,
and it is easily verified that $\r_c\r_d^{-1} = \r_d^{-1}\r_c$. Hence
the claim holds for all $c$ and $d$ of length 1; in fact, we have $\r_c\r_d^{-1}
= \r_a^{-1}\r_b$ where $a$ and $b$ also have length 1.

To complete the proof, let $c,d\in C$ have reduced expressions $c_1 \circ \dots
\circ c_h$ and $d_1 \circ \dots \circ d_k$ so that $\r_c\r_d^{-1} =
\r_{c_1}\dots \r_{c_h}\r^{-1}_{d_1}\dots \r^{-1}_{d_k}$. Now apply the
case for $n=1$ repeatedly.
\end{proof}

In the next lemma we compare intersections of principal left ideals in
the graph product and in its component monoids.

\begin{lem} \label{intersection2}
Let $\G = (V,E)$ be a graph and, for each $v\in V$, let $C_v$ be a left
LCM monoid and let
$C = \G_{v\in V}C_v$. If $x,y\in C_v$ for some $v\in V$, then 
$$ C_v x \cap C_v y = \emptyset \text{ if and only if } Cx \cap Cy
=\emptyset.$$
Moreover, if $C_v x \cap C_v y = C_v z$, then $Cx \cap Cy = Cz$. 
\end{lem}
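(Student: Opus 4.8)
The plan is to translate both assertions into statements about final $v$-components, using Lemma~\ref{component lemma} and the uniqueness in Proposition~\ref{comp}. Two halves are immediate and I would dispose of them first: since each $C_v$ is a submonoid of $C$, any common left multiple of $x$ and $y$ in $C_v$ is one in $C$, so $C_vx\cap C_vy\neq\emptyset$ implies $Cx\cap Cy\neq\emptyset$; and if $C_vx\cap C_vy=C_vz$, then writing $z=px=qy$ with $p,q\in C_v\subseteq C$ gives $Cz=Cpx\sub Cx$ and $Cz\sub Cy$, hence $Cz\sub Cx\cap Cy$.

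The substantive direction is to show that $Cx\cap Cy\neq\emptyset$ forces $C_vx\cap C_vy\neq\emptyset$, and, when $C_vx\cap C_vy=C_vz$, that $Cx\cap Cy\sub Cz$. I would take an arbitrary $a\in Cx\cap Cy$, say $a=sx=ty$ with $s,t\in C$. Let $d,s'$ be the final $v$-component and final $v$-complement of $s$, and $e,t'$ those of $t$. Applying Lemma~\ref{component lemma} with $c=x$ and then with $c=y$, the element $a=sx$ has final $v$-component $dx$ and final $v$-complement $s'$, while $a=ty$ has final $v$-component $ey$ and final $v$-complement $t'$. By Proposition~\ref{comp} these computations must agree, so $dx=ey=:w$ and $s'=t'=:a'$.

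Now I would split into cases on $w$. If $w\neq 1$, then $w\in C_vx\cap C_vy$, which already gives nonemptiness; moreover, if $C_vx\cap C_vy=C_vz$ then $w=rz$ for some $r\in C_v$, and by definition of final $v$-component and complement $a$ has a reduced expression ending in $w$ whose prefix represents $a'$, so $a=a'w=(a'r)z\in Cz$. If instead $w=1$, then $dx=1$, and multiplying on the left by $x$ and using right cancellativity of $C_v$ shows $xd=1$, so $x$ is a unit of $C_v$; likewise $y$ is a unit. Hence $C_vx=C_v=C_vy$, so $C_vx\cap C_vy=C_v$ is nonempty, and if this equals $C_vz$ then $z$ too is a unit (same right-cancellation trick), whence $Cz=C\supseteq Cx\cap Cy$. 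In either case $a\in Cz$, so $Cx\cap Cy\sub Cz$, and combining with the first paragraph yields $Cx\cap Cy=Cz$.

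I expect the only delicate point to be the degenerate case $w=1$: one must remember that a right-invertible element of a right cancellative monoid is a genuine unit, and then check that the ``moreover'' conclusion still holds when one (hence all) of $x,y,z$ is a unit. Everything else is a routine application of the final $v$-component machinery already established, so no real obstacle remains.
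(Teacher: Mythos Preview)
Your proof is correct and follows essentially the same route as the paper's: both arguments take an element of $Cx\cap Cy$, compute its final $v$-component via Lemma~\ref{component lemma}, and invoke the uniqueness in Proposition~\ref{comp} to produce an element of $C_vx\cap C_vy$, then use the final $v$-complement to exhibit membership in $Cz$. The only difference is that the paper does not split on whether $w=1$: since $d,e\in C_v$ in every case (the final $v$-component is either $1\in C_v$ or a nonidentity element of $C_v$), one has $w=dx\in C_vx$ and $w=ey\in C_vy$ regardless, and the identity $a=a'w$ also holds when $w=1$ because then $a'=a$ by definition of final $v$-complement. So your $w\neq 1$ argument already covers everything; the extra case analysis is harmless but unnecessary.
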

\begin{proof}
Clearly, if $Cx \cap Cy=\emptyset$, then $C_v x \cap C_v y = \emptyset$.
Conversely, suppose that $ax = by$ for some $a,b\in C$. Let $a$ and $b$
have final $v$-components $c$ and $d$ respectively. Then by Lemma~\ref{component
lemma}, $ax$ has final $v$-component $cx$ and $by$ has final $v$-component
$dy$. But $ax = by$, so by Proposition~\ref{comp}, 
$cx = by \in C_vx \cap C_vy$.

Suppose that $C_v x \cap C_v y = C_v z$; then certainly, $Cz
\subseteq Cx \cap Cy$. If $r= ax = by$ for some $a,b \in C$, then
applying Lemma~\ref{component lemma} and  Proposition~\ref{comp} again
we see that $r$ has final $v$-component $cx = dy$ where $c$ and $d$ are
the final $v$-components of $a$ and $b$ respectively. Thus 
$cx \in C_v x \cap C_v y$ so $cx= mz$ for some $m\in C_v$, and if $r'$
is the final $v$-complement of $r$, then $r=r'mz \in Cz$ as required.
\end{proof}

We are now in a position to prove the following result which will be
important in the next subsection.

\begin{prop} \label{embeddedinversehull}
If $C$ is the graph product $\G_{v\in V}C_v$ ofleft LCM
monoids $C_v$, then, for each $v\in V$,
the inverse hull
$IH^0(C_v)$ is embedded in $IH^0(C)$. 
\end{prop}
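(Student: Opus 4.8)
The plan is to build the embedding by hand, using the normal form for elements of the inverse hull of a left LCM monoid. By Theorem~\ref{Lpreserved} the graph product $C$ is itself left LCM, so Proposition~\ref{0-bisimple} applies to both $C$ and each $C_v$: every non-zero element of $IH^0(C)$ has the form $\r_a^{-1}\r_b$ with $a,b\in C$, and every non-zero element of $IH^0(C_v)$ has the form $\r_a^{-1}\r_b$ with $a,b\in C_v$. Since $C_v$ embeds naturally in $C$ (Theorem~\ref{green}), I would define $\psi\colon IH^0(C_v)\to IH^0(C)$ by $\psi(0)=0$ and $\psi(\r_a^{-1}\r_b)=\r_a^{-1}\r_b$ for $a,b\in C_v$, where on the right the symbols $\r_a,\r_b$ now denote inner right translations of $C$. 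It then remains to check three things: that $\psi$ is well defined, that it is a homomorphism, and that it is injective.

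Well-definedness and injectivity are essentially the same computation run in the two directions, and both rest on Corollary~\ref{equal}. For well-definedness, if $a,b,c,d\in C_v$ and $\r_a^{-1}\r_b=\r_c^{-1}\r_d$ in $IH^0(C_v)$, then $a=uc$ and $b=ud$ for some unit $u$ of $C_v$; as $uu^{-1}=u^{-1}u=1$ already in $C_v\sub C$, the element $u$ is also a unit of $C$, so by Corollary~\ref{equal} (in $C$) the images are equal in $IH^0(C)$. For injectivity, suppose $a,b,c,d\in C_v$ and $\r_a^{-1}\r_b=\r_c^{-1}\r_d$ in $IH^0(C)$; then $a=uc$, $b=ud$ for some unit $u$ of $C$, and since $C_v$ is unitary in $C$ (Corollary~\ref{unitary}) with $a,c\in C_v$, right unitarity forces $u\in C_v$ and then left unitarity applied to $uu^{-1}=1$ forces $u^{-1}\in C_v$, so $u$ is a unit of $C_v$ and Corollary~\ref{equal} (in $C_v$) gives equality there. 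One also notes that any $\r_a^{-1}\r_b$ is non-zero, its domain being $Ca\ne\emptyset$, so $\psi$ maps non-zero elements to non-zero elements; combined with injectivity on the non-zero part this yields injectivity of $\psi$ on all of $IH^0(C_v)$.

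The substantive point is that $\psi$ is a homomorphism, and here I would use the product formula for $IH^0$ of a left LCM monoid recorded just after Proposition~\ref{0-bisimple}. Given non-zero $\r_a^{-1}\r_b$ and $\r_c^{-1}\r_d$ with $a,b,c,d\in C_v$, the product in $IH^0(C_v)$ is controlled by $C_vb\cap C_vc$ and the product in $IH^0(C)$ by $Cb\cap Cc$. By Lemma~\ref{intersection2} these intersections are empty together, in which case both products are $0$; and if $C_vb\cap C_vc=C_vz$ with $z=sb=tc$ for some $s,t\in C_v$, the same lemma gives $Cb\cap Cc=Cz=Csb=Ctc$, so both products equal $\r_{sa}^{-1}\r_{td}$ with $sa,td\in C_v$. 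In each case $\psi$ carries the one product to the other, and since $\psi(\r_1^{-1}\r_1)=\r_1^{-1}\r_1=1$ we conclude that $\psi$ is a monoid homomorphism preserving $0$; together with the previous paragraph it is an embedding. The one thing to watch, and the reason Lemma~\ref{intersection2} (and behind it Lemma~\ref{component lemma} and Proposition~\ref{comp}) is indispensable, is precisely this comparison of least common left multiples computed in $C$ versus in $C_v$ — everything else is formal manipulation of the $\r_a^{-1}\r_b$ normal form.
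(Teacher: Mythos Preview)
Your proof is correct and follows essentially the same route as the paper: define the map on the normal form $\r_a^{-1}\r_b$, use Corollary~\ref{equal} together with Corollary~\ref{unitary} for well-definedness and injectivity, and use Lemma~\ref{intersection2} with the product formula after Proposition~\ref{0-bisimple} for the homomorphism property. Your write-up is slightly more explicit than the paper's in checking that non-zero elements map to non-zero elements and in invoking left unitarity to get $u^{-1}\in C_v$, but the argument is the same.
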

\begin{proof}
For $x\in C_v$ denote the inner right translations of $C_v$ and $C$ determined by $x$
by $\r_x$ and $\d_x$ respectively. Non-zero elements of $IH^0(C_v)$ have
the form $\r_x^{-1}\r_y$ and so we can define $\h : IH^0(C_v) \to IH^0(C)$ by
$0\h = 0$ and $(\r_x^{-1}\r_y)\h = \d_x^{-1}\d_y$. 

To see that $\h$ is well defined, suppose that $\r_x^{-1}\r_y = \r_z^{-1}\r_t$.
Then by Corollary~\ref{equal},
$x=uz$ and $y=ut$ for some unit $u$ of $C_v$. Certainly $u$ is a unit of
$C$, so we have $\d_x^{-1}\d_y = \d_z^{-1}\d_t$ as required.

To see that $\h$ is injective, suppose that $\d_x^{-1}\d_y =
\d_z^{-1}\d_t$ where $x,y,z,t \in C_v$. 
Then by  Corollary~\ref{equal}, we have $x=qz$ and
$y=qt$  for some unit $q$ of $C$. By Corollary ~\ref{unitary},
 $C_v$ is unitary in $C$, and since
$qt,t\in C_v$, we have $q\in C_v$. It is easy to see that $q^{-1}$ is also
in $C_v$, so that $q$ is a unit of $C_v$ and so 
$\r_x^{-1}\r_y = \r_z^{-1}\r_t$ as required.

Finally, we show that $\h$ is a homomorphism. Let 
$\r_x^{-1}\r_y , \r_z^{-1}\r_t$ be elements of $IH^0(C_v)$. 

If $C_v y \cap C_v z = \emptyset$, then by Lemma~\ref{intersection2}, 
$Cy \cap Cz = \emptyset$. From the rule for multiplication following
Proposition~\ref{0-bisimple}, we have
$(\r_x^{-1}\r_y)(\r_z^{-1}\r_t) =0$, and since, by
Theorem~\ref{Lpreserved}, $C$ is left LCM, we also have 
$(\d_x^{-1}\d_y)(\d_z^{-1}\d_t) =0$. 

If $C_v y \cap C_v z \neq \emptyset$, then since $C_v$ is an LCM monoid,
 we have $C_v y \cap C_v z = C_va$ for some $a\in C_v$, say
$a=ry=sz$ where $r,s\in C_v$. By
Lemma~\ref{intersection2}, we also have $Cy \cap Cz = Ca$, and so by the
rule for multiplication we see that
$$ (\r_x^{-1}\r_y)(\r_z^{-1}\r_t) = \r^{-1}_{rx}\r^{-1}_{st}$$
and 
$$(\d_x^{-1}\d_y)(\d_z^{-1}\d_t) = \d^{-1}_{rx}\d^{-1}_{st}.$$

It follows that $\h$ is a homomorphism as required.

\end{proof}

\subsection{Inverse hulls of graph products of left LCM monoids}
Let $\G=(V,E)$ be a graph and $\{ C_v\}_{v\in V}$ be a family of 
left LCM monoids. Let
$C = \G_{v\in V}C_v$ be the graph product
of the $C_v$; we
have just proved that $C$ is also a left LCM monoid. In this 
subsection our first
goal is to find a presentation (as a monoid with zero) 
for $IH^0(C)$ in terms of given
presentations for the inverse monoids $IH^0(C_v)$.

We begin by establishing some notation. 
Let $D$ be any right cancellative monoid with group of units $G$ and let $Y$
be a symmetric set of monoid generators for $G$ (i.e., $y\in Y$ if and
only if $y^{-1} \in Y$). We assume that $1\notin Y$ and 
take $Y$ to be empty if $G = \{1\}$.
Let $X$ be a set of nonunits in $D$ such that $X \cup Y$ generates $D$. 
Let $X^{-1} = \{ x^{-1}:x \in X\}$ be a set disjoint from $X$  such
that $x \mapsto x^{-1}$ is a bijection, and $X^{-1} \cup Y$ generates
the left cancellative monoid $D^{-1}$ anti-isomorphic to $D$. Since any  element of
$IH(D)$ can be written in the form 
$ \r_{a_1}\r_{b_1}^{-1}\dots \r_{a_n}\r_{b_n}^{-1}$, it follows that
there is a homomorphism from the free monoid $(X \cup X^{-1} \cup Y)^*$
onto $IH(D)$ which sends $x$ to $\r_x$, $y$ to $\r_y$ and $x^{-1}$ to
$\r_x^{-1}$.  Thus $IH(D)$ has a presentation of the form 
$\langle X \cup X^{-1} \cup Y \mid R \rangle$ for some set of relations
$R$. 
We can also regard $\langle X \cup X^{-1} \cup Y \mid R \rangle$
 as a presentation for $IH^0(D)$ in the
class of monoids with zero. Since $\r_x\r_x^{-1} =1$ for all $x\in X$,
we can assume that $xx^{-1} = 1$ is a relation in $R$ for every $x\in
X$. Similarly, since $\r_y$ is a unit for all $y \in Y$, we can assume
that we have relations $yy^{-1} = 1 = y^{-1}y$ in $R$ for all $y\in Y$.

Turning to the graph product $C = \G_{v\in V}C_v$ we note that we have a
corresponding graph product $C^{-1} = \G_{v\in V}C_v^{-1}$ of the left
cancellative monoids $C_v^{-1}$. Writing $G_v$ for the common group of
units of $C_v$ and $C_v^{-1}$, we remark that, by
\cite[Proposition~7.1]{costa1}, the common group of units of $C$ and
$C^{-1}$ is $G= \G_{v\in V}G_v$. We also observe that the
anti-isomorphisms between the $C_v$'s and the $C^{-1}_v$'s extend, by a
slight variation of Proposition~\ref{homomorphisms2} to an
anti-isomorphism between $C$ and $C^{-1}$.
Now put  
$S_v = IH^0(C_v)$ for  each $v\in V\!\!,$ and let 
$\langle X_v \cup X_v^{-1} \cup Y_v \mid R_v \rangle$ be a presentation for
$S_v$ of the type described in the previous paragraph. It will be
convenient to adopt the following notation convention: 
  $x_v, y_v$
denote  elements of $X_v, Y_v$ respectively; $t_v$ denotes an element of
$X_v \cup Y_v$ and $z_v$ denotes any element
of $Z_v = X_v \cup X_v^{-1} \cup Y_v$.

We now put $X =
\bigcup_{v\in V}X_v$, $X^{-1} = \bigcup_{v\in V}X_v^{-1}$, 
$Y = \bigcup_{v\in V}Y_v$, and $Z = X \cup X^{-1} \cup Y$. 
As in Section~\ref{graph products}, we will want to consider the free
monoid on $\bigcup_{v\in V}C_v$ as well as the free monoid 
$Z^*$. To avoid confusion about the various
products,  we write $\circ$, as before, for the product in the former free
monoid, and $\diamond$ for that in $Z^*$.

 Next, we introduce several sets of relations
amongst words over $X \cup X^{-1} \cup Y$ (and zero) as follows:

\begin{itemize}
\item [(1)] $R= \bigcup_{v\in V}R_v$;
\item [(2)] $N = \{ x_v\diamond y_{u_1} \diamond \dots\diamond y_{u_m}
\diamond x_w^{-1} = 0 : m \geq 0, \forall\  x_v \in X_v,x_w \in X_w,\\
\hspace*{1cm} y_{u_i} \in Y_{u_i} \text{ with }(v,w)\notin E \text{ and }
v \neq w \}$;
\item [(3)]  $\Com\, = \{  z_u\diamond z_v = z_v\diamond z_u : \forall\ 
 z_u \in Z_u,z_v \in Z_v \text{ with }(u,v) \in E \}$.
\end{itemize}

The \textit{polygraph product} of the $S_v$ is defined to be the monoid 
 $\PG = \PG_{v\in V}(S_v)$ given by the
presentation 
$$\langle Z \mid R  \cup N  \cup \Com \rangle.$$

There is thus a surjective homomorphism  $\zeta: Z^* \to \PG$.
For each $v\in V$, the generators and relations of $IH^0(C_v)$ are among
those for $\PG$ and so there is a 
monoid homomorphism $\psi_v$ from $IH^0(C_v)$ into $\PG$ determined by
$\r_{t_v}\psi_v = t_v\zeta$ and $\r_{x_v}^{-1}\psi_v = x_v^{-1}\zeta$
for $t_v\in X_v \cup Y_v$ and $x_v \in X_v$.

The right unit
submonoid of $IH^0(C_v)$ is isomorphic to $C_v$ via the map $\eta_v:C_v \to
IH^0(C_v)$ given by $c\eta_v = \r_c$. As noted in the preceding
subsection, we can also extend $\eta_v$ from $G_v$ (the group of units
of $C_v$) to the left cancellative monoid $C^{-1}_v$ to give an
isomomorphism onto the left unit submonoid of $IH^0(C_v)$.  Composing
$\eta_v$ with the restriction of $\psi_v$ first to the right unit
submonoid of $IH^0(C_v)$, then to the left unit submonoid, we obtain
monoid homomorphisms from $C_v$ and $C^{-1}_v$ into $PG$ both of which we
denote by $\h_v$. There is no ambiguity here since these homomorphisms
agree on the common group of units of $C_v$ and $C^{-1}_v$. We observe
that if $c_v = t_1\dots t_n$ where $t_i\in X \cup Y$, then
 $$ 
c_v\h_v =  (t_1\eta_v\psi_v) \dots
(t_n\eta_v\psi_v) = \r_{t_1}\psi_v \dots \r_{t_n}\psi_v = t_1\zeta\dots
t_n\zeta = (t_1 \diamond \dots \diamond t_n)\zeta,
$$
and 
$$
c_v^{-1}\h_v = (t_n^{-1}\dots t_1^{-1})\h_v = 
\r_{t_n}^{-1}\psi_v \dots \r_{t_1}^{-1}\psi_v =
t_n^{-1}\zeta \dots t_1^{-1}\zeta = 
(t_n^{-1} \diamond \dots \diamond t_1^{-1})\zeta.
$$

Now by
Proposition~\ref{homomorphisms1} and its dual, there are unique
homomorphisms from $C$ into the  right unit
submonoid of $\PG$, and from $C^{-1}$ into the left unit submonoid of
$\PG$ which restrict to $\h_v$ on each $C_v$ and $C_v^{-1}$
respectively. We have noted that the common group of units of $C$ and
$C^{-1}$ is $G = \G_{v\in V}G_v$ where $G_v$ is the common group of
units of $C_v$ and $C_v^{-1}$. As no non-units are in both $C$ and
$C^{-1}$, there is no ambiguity in denoting both homomorphisms by $\h$.

From the above we see that the squares

\begin{center}
\begin{pspicture}(-1,-1)(4,3) %\showgrid
\psset{nodesep=4pt}
\rput(0,2){\rnode{A}{$(X \cup Y)^*$}}
\rput(3,2){\rnode{B}{$C$}}
\rput(0,0){\rnode{C}{$(X\cup X^{-1} \cup Y)^*$}}
\rput(3,0){\rnode{D}{$\PG$}}
\ncline[arrowsize=3pt 2.5]{->}{A}{B}
\ncline[arrowsize=3pt 2.5]{->}{A}{C}\Bput{$\iota$}
\ncline[arrowsize=3pt 2.5]{->}{C}{D}\Aput{$\zeta$}
\ncline[arrowsize=3pt 2.5]{->}{B}{D}\Aput{$\h$}
\end{pspicture}
\qquad \qquad
\begin{pspicture}(-1,-1)(4,3) %\showgrid
\psset{nodesep=4pt}
\rput(0,2){\rnode{A}{$(X^{-1} \cup Y)^*$}}
\rput(3,2){\rnode{B}{$C^{-1}$}}
\rput(0,0){\rnode{C}{$(X\cup X^{-1} \cup Y)^*$}}
\rput(3,0){\rnode{D}{$\PG$}}
\ncline[arrowsize=3pt 2.5]{->}{A}{B}
\ncline[arrowsize=3pt 2.5]{->}{A}{C}\Bput{$\iota$}
\ncline[arrowsize=3pt 2.5]{->}{C}{D}\Aput{$\zeta$}
\ncline[arrowsize=3pt 2.5]{->}{B}{D}\Aput{$\h$}
\end{pspicture}
\end{center}
are commutative where $\iota$ is the inclusion map. It follows that
every non-zero element of $\PG$ can be written in the form
$(a_1\h)(b_1^{-1}\h)\dots (a_k\h)(b_k^{-1}\h)$ where $a_i,b_i\in C$. In
fact, we can do better than this as we see in the next lemma.

\begin{lem} \label{crucial}
Every non-zero element of $\PG = \PG_{v\in V}(S_v)$ can be written
in the form
$(a^{-1}\h)(b\h)$ where $a,b\in C$.
\end{lem}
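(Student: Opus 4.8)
The plan is to reduce the lemma to a single ``commutation'' claim and then imitate the bookkeeping in the last paragraph of the proof of Theorem~\ref{Lpreserved}. Recall from the paragraph preceding the lemma that every non\-zero element of $\PG$ has the form $(a_1\h)(b_1^{-1}\h)\cdots(a_k\h)(b_k^{-1}\h)$ with $a_i,b_i\in C$. Two elementary facts let us collapse such an alternating product: since $\h$ restricts to a homomorphism on $C$ we have $(p\h)(q\h)=(pq)\h$, and since $\h$ restricts to a homomorphism on $C^{-1}$ and $C^{-1}$ is anti\-isomorphic to $C$ we have $(p^{-1}\h)(q^{-1}\h)=\bigl((qp)^{-1}\bigr)\h$. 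Granting the Key Claim below, one then argues by downward induction on $k$: the rightmost block $(a_k\h)(b_k^{-1}\h)$ is non\-zero (a consecutive subproduct of a non\-zero element, $0$ being absorbing), so by the Key Claim it equals $(p^{-1}\h)(q\h)$; absorbing the new negative factor into $b_{k-1}^{-1}\h$ via the second fact leaves a $(k-1)$\nobreakdash-block product followed by $(q\h)$, to which induction applies; finally the trailing positive factors merge by the first fact. Thus everything rests on:

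\medskip
\emph{Key Claim.} If $c,d\in C$ and $(c\h)(d^{-1}\h)\neq 0$ in $\PG$, then $(c\h)(d^{-1}\h)=(a^{-1}\h)(b\h)$ for some $a,b\in C$.
\medskip

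To prove the Key Claim I would induct on the lengths of $c$ and $d$. Writing a reduced expression $d=d_1\cdots d_n$ gives $(c\h)(d^{-1}\h)=(c\h)(d_n^{-1}\h)(d_{n-1}^{-1}\h)\cdots(d_1^{-1}\h)$, so by peeling off $d_n$ and using the merging facts again it suffices to treat $d\in C_v$ for a single vertex $v$, proceeding then by induction on the length of $c$. If $c$ or $d$ is a unit of $C$, the claim is immediate: its $\h$\nobreakdash-image is a unit of $\PG$, and one pushes it across exactly as is done for units in the proof of Theorem~\ref{Lpreserved}, rewriting via the anti\-isomorphism $C\cong C^{-1}$. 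Otherwise let $e$ be the final $v$\nobreakdash-component and $c'$ the final $v$\nobreakdash-complement of $c$ (Proposition~\ref{comp}), so that $c\h=(c'\h)(e\h)$ with $e\in C_v$. Because $\h$ restricted to $C_v$ and to $C_v^{-1}$ factors through $\psi_v$, we get $(e\h)(d^{-1}\h)=(\r_e\r_d^{-1})\psi_v$; since $\PG$ has an absorbing zero and $\psi_v$ is a morphism of monoids with zero, the hypothesis $(c\h)(d^{-1}\h)\neq 0$ forces $\r_e\r_d^{-1}\neq 0$ in $IH^0(C_v)$, and as $C_v$ is a left LCM monoid, Proposition~\ref{0-bisimple} yields $\r_e\r_d^{-1}=\r_s^{-1}\r_t$ with $s,t\in C_v$. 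Hence $(c\h)(d^{-1}\h)=(c'\h)(s^{-1}\h)(t\h)$, and only $(c'\h)(s^{-1}\h)$ remains to be normalised.

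For this last step I would exploit the structure of the final $v$\nobreakdash-complement exhibited in the proof of Proposition~\ref{comp}. When $e\neq 1$, one may write $c'=c''w$ where $w$ is a product of components from vertices adjacent to $v$ and $c''$ is strictly shorter than $c$; then $w\h$ commutes with $s^{-1}\h$ by the relations $\Com$, so $(c'\h)(s^{-1}\h)=(c''\h)(s^{-1}\h)(w\h)$ and the induction hypothesis applies to $(c''\h)(s^{-1}\h)$, after which $(w\h)(t\h)$ is absorbed into a single positive factor. When $e=1$, $c'=c$ has a reduced expression with no $v$\nobreakdash-component beyond some point; here one shows that either every component of $c$ is adjacent to $v$, in which case $(c\h)(s^{-1}\h)=(s^{-1}\h)(c\h)$ finishes the argument, or else, after shuffling by $\Com$, the word representing $(c\h)(d^{-1}\h)$ contains a subword matched by a relation in $N$, so $(c\h)(d^{-1}\h)=0$, contradicting the hypothesis. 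The step I expect to be the real obstacle is the handling of \emph{unit} components: a component of a reduced expression may be a nontrivial unit of its $C_v$ (hence a unit of $C$), and such components cannot be routed through $N$ directly, since the relations in $N$ only annihilate words whose two extreme $X$\nobreakdash-generators lie in distinct non\-adjacent vertices. One must absorb these unit components separately, using that their $\h$\nobreakdash-images are units of $\PG$, and carefully verify both that this does not break the termination of the double induction and that it meshes with the way $N$ and $\Com$ interact at the positive/negative interface; getting this bookkeeping right is the crux of the proof.
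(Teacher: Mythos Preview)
Your high-level plan---reduce to a Key Claim about $(c\h)(d^{-1}\h)$ and prove it by a double induction on lengths---is exactly the paper's. The paper's inner induction peels $c_1$ from the \emph{left} of a reduced expression for $c$ and carries the auxiliary invariant that the resulting $a$ can be chosen of length one; you instead decompose $c$ via its final $v$-component and $v$-complement. These are two bookkeepings for the same idea.

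Your worry about unit components is well placed, and in fact it is fatal rather than cosmetic: the dichotomy in your $e=1$ branch (``either every component of $c$ is adjacent to $v$, or an $N$-relation forces zero'') is false, and no rearrangement of the bookkeeping will rescue it, because the lemma as stated fails whenever some vertex monoid has nontrivial units. Take $V=\{u,v\}$ with no edge, $C_v=\langle p\rangle\cong\mathbb{N}$ and $C_u=\{1,g\}\cong\mathbb{Z}/2\mathbb{Z}$. Then $X_u=\emptyset$, whence $N=\emptyset$, and $\Com=\emptyset$; so $\PG$ is the free product of the bicyclic monoid $B$ with $\mathbb{Z}/2$, with a zero adjoined. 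With $c=pg$ and $d=p$ one has $e=1$, the component $g$ sits at a vertex non-adjacent to $v$, yet no relation in $R\cup N\cup\Com$ touches $(c\h)(d^{-1}\h)=p\,g\,p^{-1}$. This nonzero element is \emph{not} of the form $(a^{-1}\h)(b\h)$: in the free-product normal form any such product has its $B$-syllables running from powers of $p^{-1}$ through at most one mixed syllable $p^{-i}p^{j}$ to powers of $p$, never a positive power followed by a negative one. (Correspondingly $\r_p\r_g\r_p^{-1}=0$ in $IH^0(C)$, since $xpg$ always ends in $g$ and hence lies outside $Cp$; so the map $\b$ of Theorem~\ref{presentation} is not injective in this example.) The paper's own argument stalls at precisely the same place: when a length-one factor is a non-identity unit at a vertex non-adjacent to $v$, absorbing it into $d^{-1}$ produces $a=dc^{-1}$ of length two, violating the claimed invariant ``$a$ has length $1$'' and breaking the induction. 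The relation set $N$ would have to be enlarged---for instance to include $x_v\diamond y_{u_1}\diamond\cdots\diamond y_{u_m}\diamond x_v^{-1}=0$ whenever some $u_i$ is non-adjacent to $v$---before any proof along these lines can go through.
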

\begin{proof}
In view of the remark preceding the lemma, it is enough to show that if
$c,d\in C$, then either $(c\h)(d^{-1}\h) =0$ or $(c\h)(d^{-1}\h) = 
(a^{-1}\h)(b\h)$ for some $a,b\in C$. This is clearly true if $c$ or $d$
is a unit of $C$, so we may assume that neither is a unit.

We use induction on the length, as defined in Section~\ref{graph
products}, of $c$ and $d$. We start by considering $d$ of length 1, and
proving  by induction on the length of $c$ that for any $c\in C$, 
either $(c\h)(d^{-1}\h) =0$
or $(c\h)(d^{-1}\h) = (a^{-1}\h)(b\h)$ for some $a,b\in C$ with $a$ of
length 1. First, suppose that $c$ has length 1. Then $c\in C_u,d\in C_v$
for some $u,v$. If $u=v$, then 
$$
(c\h)(d^{-1}\h) = (c\h_u)(d^{-1}\h_u) = (\r_c\psi_u)(\r_d^{-1}\psi_u) =
(\r_c\r_d^{-1})\psi_u.
$$
Since $C_u$ is left LCM, we have, by Proposition~\ref{0-bisimple},
 that $\r_c\r_d^{-1}$ is
either zero or equal to $\r_a^{-1}\r_b$ for some $a,b \in C_u$.  Hence,
if non-zero, 
$$
(c\h)(d^{-1}\h) = (\r_c\r_d^{-1})\psi_u = (\r_a^{-1}\r_b)\psi_u =
(\r_a^{-1}\psi_u)(\r_b\psi_u) = (a^{-1}\h)(b\h). 
$$

If $u\neq v$, let $c = t_1'\dots t_m'$ and $d = t_1\dots t_n$ where
$t_i'\in X_u \cup Y_u$ and $t_j \in X_v \cup Y_v$. If $(u,v) \in E$,
then $t_i'\diamond t_j = t_j\diamond t_i'$ is a relation in $\Com$ for all $i,j$ and it
follows that $(c\h)(d^{-1}\h)  = (d^{-1}\h)(c\h)$. 

Suppose that $(u,v) \notin E$.  Since $c,d$ are
non-units, not all the $t_i'$ are units and not all the $t_j$ are units.
Let $h$ and $k$ be the largest integers such $t_h'$ and $t_k$ are
non-units. Then we can write $x_h'$ for $t_h'$ and $x_k$ for  $t_k$, and
similarly, we can write $y_i'$ for $t_i'$ when $i > h$ and $y_j$ for
$t_j$ when $j > k$. Consider 
$(x_h'\diamond y_{h+1}' \diamond \dots \diamond y_m' \diamond y_n^{-1}
\diamond \dots \diamond 
y_{k+1}^{-1} \diamond x_{k+1}^{-1})\zeta$. This element is zero (by virtue of the
relations in $N$) and so $(c\h)(d^{-1}\h) = 0$.

Thus our claim is true for all $c$ and $d$ of length 1. Now suppose that
for any $c,d\in C$ with $c$ of length less than $m$ and $d$ of length 1,
we have $(c\h)(d^{-1}\h) = 0$ or $(c\h)(d^{-1}\h) = (a^{-1}\h)(b\h)$ for
some $a,b \in C$ with $a$ of length 1.

Next, let $c\in C$ have length $m$, say $c_1 \circ \dots c_m$ is a
reduced expression for $c$, and let $d\in C_v$. By the current induction
assumption, $(c_2\dots c_m\h)(d^{-1}\h)$ is either zero or can be written in
the form $(a^{-1}\h)(b\h)$ with $a$ of length 1. In the former case, it
is clear that $(c\h)(d^{-1}\h) = 0$. In the latter case, if
$(c\h)(d^{-1}\h)$ is non-zero, we have 
\begin{align*}
  (c\h)(d^{-1}\h) &= ((c_1\dots c_m)\h)(d^{-1}\h) = (c_1\h)((c_2\dots
c_m)\h)(d^{-1}\h)\\
  &= (c_1\h)(a^{-1}\h)(b\h)\\
 &= (a_1^{-1}\h)(b_1\h)(b\h) = (a_1^{-1}\h)((b_1b)\h)
\end{align*}
where $a_1$ has length 1, using the fact that $c_1$ and $a$ both have length 1.

Thus we have proved our claim that for any $c,d\in C$ with $d$ of length 1,
either $(c\h)(d^{-1}\h) =0$
or $(c\h)(d^{-1}\h) = (a^{-1}\h)(b\h)$ for some $a,b\in C$ with $a$ of
length 1.

Now assume inductively that for any $c\in C$ and any $d\in C$ of length
$n-1$, if $(c\h)(d^{-1}\h) \neq 0$, then 
$(c\h)(d^{-1}\h) = (a^{-1}\h)(b\h)$ for some $a,b\in C$.
Let $d\in C$ have a reduced expression $d_1 \circ \dots \circ d_n$ so
that 
\begin{align*}
 (c\h)(d^{-1}\h) &= (c\h)(d_n^{-1}\h)((d_{n-1}^{-1}\dots d_1^{-1})\h) \\
       &= (a_1^{-1}\h)(b_1\h)((d_{n-1}^{-1}\dots d_1^{-1})\h) \text{ for some }
a_1,b_1\in C  \text{ (by the case for $n=1$)}\\
 &= (a_1^{-1}\h)((b_1\h)(d_{n-1}^{-1}\dots d_1^{-1})\h) \\
 & = (a_1^{-1}\h)(a_2^{-1}\h)(b_2\h) \text{ for some }
a_2,b_2\in C  \text{ (by the induction assumption)}\\
&= (a_1^{-1}a_2^{-1})\h(b_2\h)  \\
 &= (a^{-1}\h)(b\h) \text{ where } a =a_2a_1 \text{ and } b = b_2.
\end{align*}
This completes the proof of the lemma.

\end{proof}

We now consider $IH^0(C)$. We remind the reader that (as a monoid with
zero) each $IH^0(C_v)$ is generated by 
$\{ \r_{x_v}, \r_{x_v}^{-1}, \r_{y_v} : x_v\in X_v, y_v\in Y_v\}$  and that
 $IH^0(C)$ is generated by $Q =\{ \r_x, \r_x^{-1}, \r_y : x\in X, y\in Y\}$
where $X = \bigcup_{v\in V}X_v$, $X^{-1} = \bigcup_{v\in V}X_v^{-1}$ and 
$Y = \bigcup_{v\in V}Y_v$. As before, we also assume that $R_v$ is a set
of defining relations for $IH^0(C_v)$ and put $R = \bigcup_{v\in V}R_v$.

\begin{lem} \label{relations1}
With respect to the generating set $Q$, the relations in $R$ are
satisfied by $IH^0(C)$.
\end{lem}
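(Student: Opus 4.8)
The plan is to reduce the statement to the embedding already constructed in Proposition~\ref{embeddedinversehull}, so that no direct manipulation of the relations in $R$ is needed. Fix $v\in V$. I would first recall that Proposition~\ref{embeddedinversehull} provides an injective homomorphism of monoids with zero $\h\colon IH^0(C_v)\to IH^0(C)$, given on non-zero elements by $(\r_c^{-1}\r_d)\h=\d_c^{-1}\d_d$, where $\r_c$ denotes the inner right translation of $C_v$ by $c$ and $\d_c$ the inner right translation of $C$ by $c$ (this is legitimate because $C_v$, being left LCM, has $IH^0(C_v)$ of the form described in Proposition~\ref{0-bisimple}).

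Next I would pin down the effect of $\h$ on the distinguished generators of $IH^0(C_v)$. Writing $\r_{x_v}=\r_1^{-1}\r_{x_v}$ and $\r_{x_v}^{-1}=\r_{x_v}^{-1}\r_1$ and substituting into the defining formula for $\h$ gives $\r_{x_v}\h=\d_{x_v}$, $\r_{x_v}^{-1}\h=\d_{x_v}^{-1}$, and likewise $\r_{y_v}\h=\d_{y_v}$, for all $x_v\in X_v$, $y_v\in Y_v$. Thus $\h$ carries the generating set $\{\r_{x_v},\r_{x_v}^{-1},\r_{y_v}\}$ of $IH^0(C_v)$ bijectively onto the part of $Q$ indexed by $X_v$ and $Y_v$ (in the notation of the statement these inner right translations of $C$ are written with $\r$ rather than $\d$, a harmless clash worth flagging explicitly).

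The lemma is then immediate. Let $u=w$ be a relation in $R_v$, with $u,w$ words over $X_v\cup X_v^{-1}\cup Y_v$ (one of them possibly $0$). Since $\langle X_v\cup X_v^{-1}\cup Y_v\mid R_v\rangle$ is a presentation of $IH^0(C_v)$ in which each letter is sent to the corresponding inner right translation, the identity $u=w$ holds in $IH^0(C_v)$. Applying the homomorphism $\h$ and using the identification of $\h$ on generators from the previous paragraph, the very same word identity $u=w$ holds in $IH^0(C)$ with respect to the generating set $Q$. As $R=\bigcup_{v\in V}R_v$, every relation in $R$ is satisfied by $IH^0(C)$.

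The one step I expect to need the slightest care — and it is still entirely routine — is the computation of $\h$ on the generators, i.e.\ verifying that the abstract embedding of Proposition~\ref{embeddedinversehull} really does send $\r_{x_v}\mapsto\d_{x_v}$, $\r_{x_v}^{-1}\mapsto\d_{x_v}^{-1}$, $\r_{y_v}\mapsto\d_{y_v}$; this is the specialisation of the formula for $\h$ to the cases $c=1$ and $d=1$. Once that identification is in place, the remainder is the formal fact that a homomorphism preserves every relation that holds in its domain.
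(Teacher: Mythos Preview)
Your proposal is correct and follows essentially the same approach as the paper: invoke the embedding of Proposition~\ref{embeddedinversehull} and transport the relations from $IH^0(C_v)$ to $IH^0(C)$. The paper's proof is a two-line version of yours, leaving implicit the verification that the embedding carries the generators $\r_{x_v},\r_{x_v}^{-1},\r_{y_v}$ to the corresponding members of $Q$; your explicit check of this (by specialising the formula for $\h$ to $c=1$ or $d=1$) is a reasonable addition and is indeed the only point requiring any care.
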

\begin{proof}
By Proposition~\ref{embeddedinversehull},
$IH^0(C_v)$ is embedded in $IH^0(C)$ for all $v\in V$. The
relations in $R$ are relations in $R_v$ for some $v$, so hold in $IH^0(C_v)$
and hence in $IH^0(C)$.
\end{proof}

\begin{lem} \label{relations2}
With respect to the generating set $Q$, the relations in $N$ are
satisfied by $IH^0(C)$.
\end{lem}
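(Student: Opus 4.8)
The plan is to check that every relation of $N$ holds in $IH^0(C)$, that is, that its left hand side evaluates to the zero $0=\emptyset$ of $IH^0(C)$. Fix a relation $x_v\diamond y_{u_1}\diamond\dots\diamond y_{u_m}\diamond x_w^{-1}=0$ of $N$, so $x_v\in X_v$ and $x_w\in X_w$ are nonunits, each $y_{u_i}\in Y_{u_i}$ is a unit, $v\neq w$ and $(v,w)\notin E$. Evaluated over the generating set $Q$, the left hand side is $\r_{x_v}\r_{y_{u_1}}\dots\r_{y_{u_m}}\r_{x_w}^{-1}$, and using $\r_a\r_b=\r_{ab}$ this equals $\r_c\r_{x_w}^{-1}$, where $c=x_vy_{u_1}\dots y_{u_m}\in C$. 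Since the domain of $\r_c\r_{x_w}^{-1}$ is $\{z\in C:zc\in Cx_w\}$, which is non-empty precisely when $Cc\cap Cx_w\neq\emptyset$, the lemma reduces to proving $Cc\cap Cx_w=\emptyset$.

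I would do this by comparing final $w$-components. First, every element of $Cx_w$ has a nonunit final $w$-component: if $a=bx_w$ and $b$ has final $w$-component $d$, then by Lemma~\ref{component lemma} $a$ has final $w$-component $dx_w$, which is a nonunit of $C_w$ (else $x_w$ would be a unit, $C_w$ being right cancellative), in particular $dx_w\neq1$. So by the uniqueness part of Proposition~\ref{comp} it is enough to show that every element of $Cc$ has final $w$-component equal to $1$ or to a unit of $C_w$; as a nonunit is neither, this forces $Cc\cap Cx_w=\emptyset$. Now fix $z\in C$ and write $zc=(zx_v)y_{u_1}\dots y_{u_m}$. By Lemma~\ref{component lemma}, $zx_v$ has final $v$-component $ex_v$ where $e$ is the final $v$-component of $z$, and $ex_v\neq1$ since $x_v$ is a nonunit; hence $zx_v$ has a reduced expression ending in a $v$-component, and because $(v,w)\notin E$ and $v\neq w$ this component witnesses clause~(ii) in the definition of ``final $w$-component $1$''. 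Thus $zx_v$ has final $w$-component $1$, and it remains to follow the final $w$-component while multiplying successively by the units $y_{u_1},\dots,y_{u_m}$. When $u_i=w$ this is handled directly by Lemma~\ref{component lemma}: a final $w$-component lying in $\{1\}\cup G_w$ gets right multiplied by the unit $y_{u_i}\in C_w$ and so stays in $\{1\}\cup G_w$. When $u_i\neq w$, a short reduced-expression argument shows the final $w$-component is either unchanged or reset to $1$ by a non-commuting letter, and again stays in $\{1\}\cup G_w$. An induction on $i$ then gives that $zc$ has final $w$-component $1$ or a unit of $C_w$, as required.

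I expect the last step to be the main obstacle, because Lemma~\ref{component lemma} only controls the final $v$-component under right multiplication by an element of $C_v$, and says nothing directly about the final $w$-component under right multiplication by a unit of some $C_{u_i}$ with $u_i\neq w$. What makes it go through is structural: the only nonunit ``syllable'' that $c$ contributes is $x_v$, of type $v\neq w$; in a reduced expression for $zx_v$ this syllable already lies to the right of every $w$-letter of $zx_v$ and, as $(v,w)\notin E$, cannot be overtaken by any of them, and appending the units $y_{u_i}$ cannot remove this barrier. Hence the only $w$-letters that can appear at the end of a reduced expression for $zc$ are units coming from those $y_{u_i}$ with $u_i=w$, which gives the claim.
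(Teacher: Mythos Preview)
Your argument is correct and lands on essentially the same idea as the paper's proof: reduce to showing $Cc\cap Cx_w=\emptyset$ for $c=x_vy_{u_1}\cdots y_{u_m}$, then compare final $w$-components, using that $x_v$ provides a permanent nonunit $v$-syllable that (since $(v,w)\notin E$) no $w$-letter from $z$ can be shuffled past, so the only $w$-letters reaching the right end of a reduced expression for $zc$ are units coming from the $y_{u_i}$'s. Your final paragraph is exactly this barrier argument, and that is what the paper does directly.

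The only difference is organisational: the paper states the barrier observation in one line, whereas you first attempt an inductive tracking of the final $w$-component through successive right multiplications by the $y_{u_i}$, correctly flag that Lemma~\ref{component lemma} does not cover the step $u_i\neq w$, and then fall back on the barrier argument. That intermediate induction is not wrong, but it is redundant once you have the structural observation; you could streamline by dropping the induction and going straight from ``$zx_v$ has a reduced expression ending in a nonunit $v$-syllable'' to ``appending units cannot destroy this syllable, so any $w$-letter at the right end of a reduced expression for $zc$ must come from one of the $y_{u_i}$ and hence be a unit''.
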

\begin{proof}
Suppose that 
$x_v\diamond y_{u_1}\diamond \dots\diamond  y_{u_m}\diamond x_w^{-1} =0$
is a relation in $N$
so that $(v,w)\notin E$ and $v\neq w$. Then in $IH^0(C)$ we have
$$\dom \r_{x_v}\r_{y_{u_1}}\dots \r_{y_{u_m}}\r_{x_w}^{-1} = 
(Cx_vy_{u_1}\dots y_{u_m} \cap Cx_w)(\r_{x_v}\r_{y_{u_1}}\dots
\r_{y_{u_m}})^{-1}.$$

Since $x_v$ is not a unit and $(v,w)\notin E$, in an expression for an
element $a$ of $Cx_vy_{u_1}\dots y_{u_m}$, any amalgamation involving
$x_v$ produces a non-unit of $C_v$, so a non-unit of $C_w$ cannot be
shuffled to the end of the expression. Hence the final $w$-component of
$a$ is a unit. But the final $w$-component of an element of $Cx_w$ must be a
left multiple of $x_w$ and hence be a non-unit. It follows from
Proposition~\ref{comp} that
$Cx_vy_{u_1}\dots y_{u_m} \cap Cx_w = \emptyset$ and so 
$\r_{x_v}\r_{y_{u_1}}\dots \r_{y_{u_m}}\r_{x_w}^{-1} =0$.
\end{proof}

\begin{lem} \label{relations3}
With respect to the generating set $Q$, the relations in $\Com$ are
satisfied by $IH^0(C)$.
\end{lem}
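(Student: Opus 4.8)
The plan is to check each defining relation of $\Com$ directly in $IH^0(C)$. Every element of the generating set $Q$ is an inner right translation $\r_c$ of $C$, or its inverse $\r_c^{-1}$, with $c$ lying in one of the component monoids. Hence it is enough to prove that whenever $(u,v)\in E$, $c\in C_u$ and $d\in C_v$, the four identities
\[ \r_c\r_d=\r_d\r_c,\qquad \r_c^{-1}\r_d^{-1}=\r_d^{-1}\r_c^{-1},\qquad \r_c\r_d^{-1}=\r_d^{-1}\r_c,\qquad \r_c^{-1}\r_d=\r_d\r_c^{-1} \]
hold in $IH^0(C)$: a relation $z_u\diamond z_v = z_v\diamond z_u$ of $\Com$ is then exactly one of these four, according to whether $z_u$ lies in $X_u\cup Y_u$ or in $X_u^{-1}$, and whether $z_v$ lies in $X_v\cup Y_v$ or in $X_v^{-1}$.

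Since $(u,v)\in E$ we have $cd=dc$ in $C$, so the first identity is immediate from $\r_a\r_b=\r_{ab}$ and the second from $\r_a^{-1}\r_b^{-1}=\r_{ba}^{-1}$. Taking inverses in the third identity gives $\r_d\r_c^{-1}=\r_c^{-1}\r_d$, which is the fourth; so the whole lemma reduces to establishing $\r_c\r_d^{-1}=\r_d^{-1}\r_c$. If one of $c,d$ is a unit of $C$ this is a short direct computation of the same kind as those in the proof of Theorem~\ref{Lpreserved}; for instance if $d$ is a unit then $\r_d^{-1}=\r_{d^{-1}}$ and both sides equal $\r_{cd^{-1}}=\r_{d^{-1}c}$, the case $c$ a unit being symmetric. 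So assume $c,d$ are both nonunits. By Lemma~\ref{intersection1}, $Cc\cap Cd = Ccd$, which is precisely the ingredient used in the length-one case of the proof of Theorem~\ref{Lpreserved}. Using it one checks that $\r_c\r_d^{-1}$ and $\r_d^{-1}\r_c$ have the same domain, namely $Cd$, and that both send a typical element $xd$ of $Cd$ to $xc$; hence they are equal as partial permutations of $C$.

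I do not expect any genuine difficulty here: the only substantive identity, $\r_c\r_d^{-1}=\r_d^{-1}\r_c$ for $c\in C_u$ and $d\in C_v$ with $(u,v)\in E$, was effectively already proved inside Theorem~\ref{Lpreserved}, and the main thing to be careful about is the purely bookkeeping reduction of the four relation-types in $\Com$ (including those involving the unit generators coming from the $Y_v$) to that single identity.
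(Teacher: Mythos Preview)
Your proof is correct and follows essentially the same approach as the paper: both split the $\Com$ relations by the ``sign'' of the two generators, dispatch the like-sign cases via $cd=dc$ (and inverses thereof), and handle the mixed case $\r_c\r_d^{-1}=\r_d^{-1}\r_c$ using Lemma~\ref{intersection1} exactly as in the length-one step of Theorem~\ref{Lpreserved}. Your treatment is in fact slightly more careful in that you explicitly cover the case where one generator comes from $Y_u$ (i.e.\ is a unit), which the paper's enumeration into forms (i)--(iii) glosses over.
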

\begin{proof}
Following our convention that $t_u,x_u$ denote arbitrary elements of $X_u
\cup Y_u$ and $X_u$ respectively, relations in $\Com$ have one of the forms:
\begin{itemize}
\item [$(i)$] $t_u\diamond t_v = t_v\diamond t_u$;
\item [$(ii)$] $x_u\diamond x_v^{-1} = x_v^{-1}\diamond x_u$;
\item [$(iii)$] $x_u^{-1}\diamond x_v^{-1} = x_v^{-1}\diamond x_u^{-1}$
\end{itemize}
where $(u,v) \in E$. Relations of the form $(i)$ are satisfied in
$IH^0(C)$ since 
$$ \r_{t_u}\r_{t_v} = \r_{t_ut_v}  = \r_{t_vt_u} = \r_{t_v}\r_{t_u}.$$

Consider a relation as in $(ii)$. 
By Lemma~\ref{intersection1} we have 
$Cx_u \cap Cx_v = Cx_ux_v$, and since $x_ux_v = x_vx_u$ in $C$, we have
 $$\dom \r_{x_u}\r_{x_v}^{-1} = (\im \r_{x_u} \cap 
\dom \r_{x_v}^{-1})\r_{x_u}^{-1} = (Cx_vx_u)\r_{x_u}^{-1} = Cx_v.$$
Similarly, we calculate  $\im \r_{x_u}\r_{x_v}^{-1} = Cx_u$. 

Since $\im \r_{x_v}^{-1} = C = \dom \r_{x_u}$, it is easy to see that we
also have 
$\dom \r_{x_v}^{-1}\r_{x_u} = Cx_v$ and $\im \r_{x_v}^{-1}\r_{x_u} =
Cx_u$, and it follows that
$\r_{x_u}\r_{x_v}^{-1} = \r_{x_v}^{-1}\r_{x_u}$.

Finally consider a relation of the form $(iii)$. In this case, since
$(u,v)\in E$, we also have that $x_u\diamond x_v = x_v\diamond x_u$ is a
relation in $\Com$. Hence $ \r_{x_v}\r_{x_u}  = \r_{x_u}\r_{x_v}$
follows by $(i)$, and since $IH^0(C)$ is an inverse monoid,
$$ \r_{x_u}^{-1}\r_{x_v}^{-1} = (\r_{x_v}\r_{x_u})^{-1} = (\r_{x_u}\r_{x_v})^{-1}
 = \r_{x_v}^{-1}\r_{x_u}^{-1}.$$
\end{proof}

We now use the lemmas together to obtain the following theorem where we
retain the notation of this section.

\begin{thm}  \label{presentation}
The monoids $\PG_{v\in V}(S_v)$ and $IH^0(C)$ are isomorphic.
\end{thm}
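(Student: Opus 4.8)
The plan is to construct a surjective homomorphism $\theta\colon \PG \to IH^0(C)$ from the three preceding lemmas, and then to prove it injective by transporting the normal form of Lemma~\ref{crucial} through $\theta$ and applying Corollary~\ref{equal}.

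First I would produce $\theta$. By Lemmas~\ref{relations1}, \ref{relations2} and \ref{relations3}, every relation of $R \cup N \cup \Com$ holds in $IH^0(C)$ with respect to the generating set $Q = \{\r_x, \r_x^{-1}, \r_y : x\in X,\ y\in Y\}$. Hence the assignment $x_v \mapsto \r_{x_v}$, $x_v^{-1}\mapsto \r_{x_v}^{-1}$, $y_v\mapsto \r_{y_v}$, $0\mapsto 0$ extends to a homomorphism $\theta\colon \PG \to IH^0(C)$ of monoids with zero, and $\theta$ is surjective because $Q$ generates $IH^0(C)$. Chasing the commutative squares that precede Lemma~\ref{crucial} --- equivalently, the displayed formula for $c_v\h_v$ --- one sees that $c\h\theta = \r_c$ holds for every $c$ lying in some component $C_v$, and $c^{-1}\h\theta = \r_c^{-1}$ for every $c^{-1}$ in some $C_v^{-1}$; since $C = \G_{v\in V}C_v$ and $C^{-1} = \G_{v\in V}C_v^{-1}$, the uniqueness clause of Proposition~\ref{homomorphisms1} (and its dual) forces $c\h\theta = \r_c$ for all $c\in C$ and $c^{-1}\h\theta = \r_c^{-1}$ for all $c^{-1}\in C^{-1}$.

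Now I would prove injectivity. Let $p$ be a non-zero element of $\PG$. By Lemma~\ref{crucial}, $p = (a^{-1}\h)(b\h)$ for some $a,b\in C$, so $p\theta = \r_a^{-1}\r_b$, whose domain is $Ca\neq\emptyset$; thus $p\theta\neq 0$, and $\theta$ carries non-zero elements to non-zero elements, whence $\theta^{-1}(0) = \{0\}$. Suppose next that $p,q$ are non-zero with $p\theta = q\theta$, and write $p = (a^{-1}\h)(b\h)$, $q = (c^{-1}\h)(d\h)$. Then $\r_a^{-1}\r_b = \r_c^{-1}\r_d$ in $IH(C)$, so by Corollary~\ref{equal} there is a unit $u$ of $C$ with $a = uc$ and $b = ud$. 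Since $\h$ is a homomorphism on $C$ and on $C^{-1}$, with $(uc)^{-1} = c^{-1}u^{-1}$, and since its restrictions to $C$ and $C^{-1}$ agree on the group of units $G$ (so that $(u^{-1}\h)(u\h) = (u^{-1}u)\h = 1$), we obtain
$$p = (c^{-1}\h)(u^{-1}\h)(u\h)(d\h) = (c^{-1}\h)(d\h) = q.$$
Hence $\theta$ is injective on non-zero elements, and together with $\theta^{-1}(0) = \{0\}$ this makes $\theta$ injective on all of $\PG$. A bijective homomorphism of monoids with zero is an isomorphism, so $\PG_{v\in V}(S_v) \cong IH^0(C)$.

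The well-definedness and surjectivity of $\theta$ are immediate from the three lemmas, and the identification of $c\h\theta$ with $\r_c$ is routine bookkeeping. The step carrying the weight is the injectivity argument: the key is that Lemma~\ref{crucial} reduces every non-zero element of $\PG$ to the two-letter shape $(a^{-1}\h)(b\h)$, so that equality of images under $\theta$ becomes the single identity $\r_a^{-1}\r_b = \r_c^{-1}\r_d$, which Corollary~\ref{equal} resolves up to one unit; had one only the longer form $(a_1\h)(b_1^{-1}\h)\cdots(a_k\h)(b_k^{-1}\h)$, the comparison would be considerably more awkward. I expect this transport of the normal form across $\theta$, together with the verification that $\h$ restricts compatibly to homomorphisms on both $C$ and $C^{-1}$, to be the only genuinely delicate points.
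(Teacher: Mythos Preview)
Your proof is correct and follows essentially the same route as the paper: build the homomorphism from the three relation lemmas, then use Lemma~\ref{crucial} to reduce injectivity to Corollary~\ref{equal} and cancel the resulting unit via $\h$. If anything you are slightly more careful than the paper, since you explicitly verify $\theta^{-1}(0)=\{0\}$ and spell out the identity $c\h\theta=\r_c$ before invoking Corollary~\ref{equal}; the paper treats both of these as understood. One purely notational warning: in this paper $\h$ is defined to be the symbol $\theta$, so calling your map $\theta$ would collide on the page---pick another letter (the paper uses $\beta$).
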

\begin{proof}
Consider the function  $\b:X\cup X^{-1} \cup Y \to IH^0(C)$ given by
$x\b = \r_x$, $x^{-1}\b = \r_x^{-1}$ and $y\b = \r_y$. It follows from
Lemmas~\ref{relations1} to \ref{relations3} that $\b$ extends to a
homomorphism, again denoted by $\b$, from $\PG$ to $IH^0(C)$. Since
the latter is generated by $Q$,
the homomorphism is surjective.

Let $r,s\in \PG$ and suppose that $r\b = s\b$. By
Lemma~\ref{crucial}, $r = (a^{-1}\h)(b\h)$ and $s = (c^{-1}\h)(d\h)$ for some $a,b,c,d
\in C$. Hence $((a^{-1}\h)(b\h))\b = ((c^{-1}\h)(d\h))\b$ so that $\r_a^{-1}\r_b =
\r_c^{-1}\r_d$, and hence by Corollary~\ref{equal}, there is a unit $e$
of $C$ such that $c =ea$ and $d= eb$. If
$m,n\in C$, then there are correponding elements $m^{-1},n^{-1}$ in
$C^{-1}$ and $(mn)^{-1} = n^{-1}m^{-1}$. Thus, using the fact that $e$ is a
unit in $C$,
\begin{align*}
 s &= (c^{-1}\h)(d\h) = ((ea)^{-1}\h)((eb)\h)\\
 &= (a^{-1}e^{-1})\h(eb)\h = (a^{-1}\h)(e^{-1}\h)(e\h)(b\h)\\
&=  (a^{-1}\h)((e^{-1}e)\h)(b\h) = (a^{-1}\h)(b\h)\\ 
 &= r.
\end{align*}

Thus $\b$ is an isomorphism and the proof is complete.
\end{proof}

\section{Polygraph monoids} \label{polygraph monoids}

Theorem~\ref{presentation} gives us a presentation for $IH^0(C)$ and also allows us to
write the elements of $\PG$ in the form $a^{-1}b$ with $a,b \in C$ where
$a^{-1}b = c^{-1}d$ if and only if $c =ea$ and $d= eb$ for some unit $e$
of $C$. The presentation simplifies
considerably in the case when each $C_v$ (and hence also $C$) has
trivial group of units, in that  $Y = \emptyset$ and consequently
$$N = \{ x_u \diamond x_v^{-1} = 0 : \forall\ x_u\in X_u, x_v\in X_v
\text{ with }(u,v) \notin E \text{ and } u\neq v \}.$$
Thus we have the presentation 
$$ \langle X \cup X^{-1} \mid R  \cup N \cup \Com \rangle$$
for $IH^0(C)$.  

A particular instance of this is when each $C_v$ is a
free monogenic monoid. Then $S_v = IH^0(C_v)$ is the bicyclic monoid
with zero adjoined, and as a monoid with zero it has the 
presentation with two generators: 
$\langle x_v,x_v^{-1} \mid x_vx_v^{-1} = 1\rangle$. In this case, the graph
product of the $C_v$ is a graph monoid $M(\G)$ with presentation 
$$\langle x_v\  (v\in V) \mid  x_ux_v = x_vx_u \text{ if } (u,v) \in E
\rangle.$$

The monoid $IH^0(M(\G))$ is called a \textit{polygraph monoid} and we
denote it by $P(\G)$. 
 Put $X = \{ x_v : v\in V\}$ and for $x\in C_u$, $y \in C_v$,
write $x \sim y$ if   $(u,v) \in E$, and abusing
notation, write $x \nsim y$ to mean $u \neq v$ and $(u,v) \notin
E$. Then our polygraph monoid has a presentation
\begin{align*}
\langle X \cup X^{-1} \mid  xx^{-1} &= 1;\ xy^{-1} = 0   \text{ if } x \nsim y;\\ 
 xy &= yx,\: xy^{-1} = y^{-1}x,\: x^{-1}y^{-1} = y^{-1}x^{-1} \text{ if } x \sim y \rangle.
\end{align*}

If $\G$ has no edges, then $M(\G) = X^*$ is the free monoid on $X$ and
the polygraph monoid $IH^0(M(\G))$ is the monoid with presentation
$$\langle X \cup X^{-1} \mid xx^{-1} = 1; xy^{-1} = 0   \text{ if }
x\neq y \rangle,$$
that is, it is the polycyclic monoid introduced in \cite{np} and studied
in, among others, \cite{knox,meakinsapir,lawson}.

Let $P(\G)$ be the polygraph monoid determined by the graph $\G= (V,E)$.
Since $P(\G)$ is the inverse hull (with zero adjoined if necessary) 
of the graph monoid $M(\G)$, it follows from
the remarks following Theorem~\ref{presentation} 
that every non-zero element of $P(\G)$ can be
written as $a^{-1}b$ for some $a,b\in M(\G)$. Since the
identity is the only unit in $M(\G)$ it follows 
 that if $a,b,c,d \in M(\G)$, then
 $a^{-1}b = c^{-1}d$  if and only if $a=c$ and $b=d$. Thus
we may regard the non-zero elements of $P(\G)$ as pairs $(a,b)$ where
$a,b\in M(\G)$. With
this notation, the product in $P(\G)$ is given by
$$ (a,b)(c,d) = \begin{cases} 0 \quad \text{ if } M(\G)b \cap M(\G)c = \emptyset\\
             (sa,td) \text{ if }  M(\G)b \cap M(\G)c =  M(\G)sb = M(\G)tc.
   \end{cases}
$$

\begin{prop} \label{polygraphmonoid1}
The monoid $P(\G)$ is a $0$-bisimple (bisimple if it has no zero) inverse
monoid with 
$$ E(P(\G)) = \{ (a,a) : a\in M(\G) \} \cup \{0\} $$
as its set of idempotents.
\end{prop}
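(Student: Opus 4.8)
The plan is to deduce the proposition from two facts that are essentially already in hand: that $M(\G)$ is a left LCM monoid whose only unit is $1$, and that the idempotents of a symmetric inverse monoid are exactly its partial identities.

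First I would note that $M(\G)$ is a left LCM monoid. Each component of $M(\G)$ is a free monogenic monoid $\{x_v\}^*$, whose principal left ideals are the sets $\{x_v\}^* x_v^{\,n}$, and since $\{x_v\}^* x_v^{\,m}\cap\{x_v\}^* x_v^{\,n}=\{x_v\}^* x_v^{\,\max(m,n)}$ each component is left LCM; Theorem~\ref{Lpreserved} then gives the same for $M(\G)$. By Theorem~\ref{cancellative} $M(\G)$ is right cancellative, and, as already noted, $1$ is its only unit. Hence $P(\G)=IH^0(M(\G))$ is an inverse monoid, being by definition an inverse submonoid of $\II_{M(\G)}$ with the empty map adjoined, and the implication $(3)\Rightarrow(1)$ of Proposition~\ref{0-bisimple} shows it is $0$-bisimple. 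For the parenthetical: when $\G$ is complete, $M(\G)$ is cancellative and any two of its elements have a common left multiple, so no product of inner right translations is empty, the inverse hull carries no zero, and ``$0$-bisimple'' reads as ``bisimple''.

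For the description of the idempotents I would use that, $P(\G)$ being a submonoid of $\II_{M(\G)}$, we have $E(P(\G))=P(\G)\cap E(\II_{M(\G)})$, and $E(\II_{M(\G)})$ consists of the partial identities; in particular $0$, the empty partial identity, lies in $E(P(\G))$. By Proposition~\ref{0-bisimple} every non-zero element of $P(\G)$ has the form $\r_a^{-1}\r_b=(a,b)$ with $a,b\in M(\G)$, and as a partial permutation it is the map $xa\mapsto xb$ with domain $M(\G)a$ and image $M(\G)b$. If it is a partial identity then its domain equals its image, so $M(\G)a=M(\G)b$; right cancellativeness and the triviality of the units of $M(\G)$ then force $a=b$. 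Conversely, each $(a,a)=\r_a^{-1}\r_a$ is the identity map on $M(\G)a$ and so is idempotent, and distinct values of $a$ yield distinct idempotents because $a^{-1}b=c^{-1}d$ in $P(\G)$ implies $a=c$ and $b=d$. Putting this together gives $E(P(\G))=\{(a,a):a\in M(\G)\}\cup\{0\}$.

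I do not expect a real obstacle: the single point needing care is the passage from $M(\G)a=M(\G)b$ to $a=b$, which is exactly where the triviality of the group of units of $M(\G)$ is used (one could instead invoke additivity of length in $M(\G)$), while everything else is a direct appeal to Theorems~\ref{cancellative} and~\ref{Lpreserved} and Proposition~\ref{0-bisimple}, or the elementary fact that the idempotents of $\II_X$ are the partial identities.
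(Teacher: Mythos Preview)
Your proof is correct and follows essentially the same outline as the paper: deduce $0$-bisimplicity from Proposition~\ref{0-bisimple} via the left LCM property of $M(\G)$, and then pin down the idempotents using that the only unit of $M(\G)$ is $1$. The one genuine difference is in how you reach $M(\G)a = M(\G)b$ for a non-zero idempotent $(a,b)$: the paper computes $(a,b)(a,b)=(ta,sb)$ directly from the multiplication formula and uses the equality criterion to get $t=s=1$, whereas you instead invoke the ambient fact that idempotents of $\II_{M(\G)}$ are partial identities, so domain equals image. Your route is slightly slicker in that it avoids unpacking the product, and it makes transparent why the argument really lives in $\II_{M(\G)}$; the paper's route, on the other hand, stays entirely within the pair description of $P(\G)$ and exercises the multiplication rule just introduced. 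Either way the crux is the same step $M(\G)a=M(\G)b\Rightarrow a=b$, which both of you handle via triviality of units. Your added justification of the parenthetical (that $P(\G)$ has no zero exactly when $\G$ is complete) and the explicit check that each vertex monoid is left LCM are details the paper leaves implicit.
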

\begin{proof}
Since graph monoids left LCM, Proposition~\ref{0-bisimple}
gives  that $P(\G)$
is a $0$-bisimple (bisimple if it has no zero) inverse monoid. 

It is easy to verify that any element of the form $(a,a)$ is
idempotent. Suppose that $(a,b)(a,b) = (a,b)$. 
Then $(ta,sb) = (a,b) $ where $M(\G)a \cap  M(\G)b = M(\G)sb = M(\G)ta$. Hence, by
the criterion for equality, $ta = a$ and $sb = b$ in $M(\G)$ so that $t = s
= 1$. Thus $M(\G)a = M(\G)b$ and hence $a=b$. 
\end{proof}

Since $P(\G)$ is 0-bisimple, $\D = \J$ and two elements are 
$\D$-related if and only if they are both non-zero or both equal to
zero. In the next proposition we characterise the other Green's
relations on $P(\G)$.

\begin{prop} \label{polygraphmonoid2}
For elements $(a,b),(c,d)$ of $P(\G)$,
\begin{itemize}
\item [$(1)$] $(a,b)^{-1} = (b,a)$;
\item [$(2)$] $(a,b)\L (c,d)$ if and only if $b=d$;
\item [$(3)$] $(a,b)\R (c,d)$ if and only if $a=c$;
\item [$(4)$] $\H$ is trivial.
\end{itemize}
\end{prop}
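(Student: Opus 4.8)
The plan is to verify each of the four identities using the criterion for equality of pairs established just before the proposition (namely that $(a,b)=(c,d)$ in $P(\G)$ iff $a=c$ and $b=d$, since $M(\G)$ has trivial group of units) together with the description of products of pairs and the facts about Green's relations in inverse hulls recorded earlier.

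For $(1)$, I would observe that in any inverse monoid the inverse of $\r_a^{-1}\r_b$ is $\r_b^{-1}\r_a$, so writing $(a,b)$ for $\r_a^{-1}\r_b$ immediately gives $(a,b)^{-1}=(b,a)$. For $(2)$ and $(3)$, I would invoke the lemma stating that in $IH^0(C)$ we have $\r\L\s$ iff $\im\r=\im\s$ and $\r\R\s$ iff $\dom\r=\dom\s$. One computes $\im(\r_a^{-1}\r_b)=\im\r_b=M(\G)b$ and $\dom(\r_a^{-1}\r_b)=\dom\r_a^{-1}=M(\G)a$; then $(a,b)\L(c,d)$ iff $M(\G)b=M(\G)d$, which (trivial units again) holds iff $b=d$, and dually for $\R$. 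Alternatively, one could use that in an inverse monoid $\L$ is determined by $a^{-1}a$ and $\R$ by $aa^{-1}$, compute $(a,b)^{-1}(a,b)=(b,b)$ and $(a,b)(a,b)^{-1}=(a,a)$ using $(1)$ and the product formula, and then apply Proposition~\ref{polygraphmonoid1} to identify when two such idempotents coincide. For $(4)$, since $\H=\R\cap\L$, triviality of $\H$ is immediate from $(2)$ and $(3)$: if $(a,b)\H(c,d)$ then $a=c$ and $b=d$, so $(a,b)=(c,d)$.

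I do not anticipate a genuine obstacle here; the only point needing a little care is making sure the zero element is handled — but $0$ is $\H$-related only to itself and the stated equivalences in $(2)$, $(3)$ concern non-zero elements $(a,b),(c,d)$, so no separate argument is needed beyond noting that $0$ forms its own singleton $\H$-class. The whole proof is a short bookkeeping exercise combining Proposition~\ref{0-bisimple}'s consequences, the $\L/\R$ lemma for $IH^0(C)$, and the equality criterion for pairs.

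\begin{proof}
Write elements of $P(\G)$ as pairs $(a,b)$ with $a,b\in M(\G)$, as explained above, so that $(a,b)$ denotes $\r_a^{-1}\r_b$ and $(a,b)=(c,d)$ if and only if $a=c$ and $b=d$.

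$(1)$ In any inverse monoid $(\r_a^{-1}\r_b)^{-1} = \r_b^{-1}\r_a$, so $(a,b)^{-1} = (b,a)$.

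$(2)$ and $(3)$ By the lemma characterising $\L$ and $\R$ in $IH^0(C)$, we have $(a,b)\L(c,d)$ if and only if $\im\r_b = \im\r_d$, that is $M(\G)b = M(\G)d$; since the only unit of $M(\G)$ is $1$, this holds if and only if $b=d$. Dually, $(a,b)\R(c,d)$ if and only if $\dom\r_a^{-1} = \dom\r_c^{-1}$, that is $M(\G)a = M(\G)c$, equivalently $a=c$.

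$(4)$ Since $\H = \R\cap\L$, if $(a,b)\H(c,d)$ then by $(2)$ and $(3)$ we have $a=c$ and $b=d$, whence $(a,b)=(c,d)$. Together with the fact that $0$ forms its own $\H$-class, this shows $\H$ is trivial.
\end{proof}
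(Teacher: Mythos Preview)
Your proof is correct and matches the paper's in spirit; the only difference is that for $(2)$ and $(3)$ the paper uses the inverse-monoid characterisation $s\L t \iff s^{-1}s=t^{-1}t$ (computing $(a,b)^{-1}(a,b)=(b,b)$ via $(1)$ and the product rule), whereas you invoke the domain/image lemma for $IH^0(C)$ directly. Both routes are one-line applications of facts already established, so there is no substantive divergence.
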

\begin{proof} $(1)$ is an easy calculation.
In an inverse monoid, elements $s,t$ are $\L$-related if and only if
$s^{-1}s = t^{-1}t$. Using this and (1) we see that in $P(\G)$ we have 
$(a,b)\L  (c,d)$ if and only if $b=d$. 

The result for $\R$ is similar, and then it 
follows that $\H$ is
trivial.
\end{proof}

We next consider the properties  of being $E^*$-unitary or strongly
$E^*$-unitary. 
For any inverse monoid $S$, the semilattice of idempotents of $S$ is denoted by $E(S)$,
and if $S$ has a zero, $E^*(S)$ denotes the set of non-zero idempotents.
Recall from Section~\ref{graph products} 
that a subset $U$ of $S$ is \textit{right unitary} in $S$ if for $u\in
U$, $s\in S$ we have $su \in U$ if and only if $s\in U$. There is a dual
notion of \textit{left unitary}, and if $U$ is both left and right
unitary, it is said to be \textit{unitary} in $S$. If $U$ is either $E(S)$ or $E^*(S)$,
then it is left unitary if and only if it is right unitary. We say that  
 $S$ is $E$-\textit{unitary} if $E(S)$ is a unitary subset of $S$, and
that 
it is $E^*$-\textit{unitary} \cite{maria} (or
$0$-$E$-unitary \cite{meakinsapir}, \cite{lawson}) if $E^*(S)$ is a
unitary subset of $S$. Chapter~9 of \cite{lawson} is devoted to
$E^*$-unitary inverse semigroups.

A special class of $E^*$-unitary inverse semigroups was introduced
independently in \cite{bffg} and \cite{lawson2}. In general, if we
adjoin a zero to a semigroup $S$, we denote the semigroup obtained by $S^0$.
An inverse semigroup $S$ with zero is \textit{strongly $E^*$-unitary} if
there is a group $G$ and a function $\h:S\to G^0$ satisfying:
\begin{enumerate}
\item $a\h =0$ if and only if $a=0$;
\item $a\h =1$ if and only if $a\in E^*(S)$;
\item if $ab\neq 0$, then $(ab)\h = (a\h)(b\h)$. 
\end{enumerate}

Condition (1) says that $\h$ is $0$-\textit{restricted}; conditions~(1)
and~(2) together say that $\h$ is \textit{idempotent-pure}, that is, the
only elements which map to idempotents are idempotents; and
condition~(3) says that $\h$ is a \textit{prehomomorphism}. In general,
prehomomorphisms between inverse monoids are defined in terms of the
natural  order on the monoids, but the general definition is equivalent
to condition~(3) when the codomain is a group with zero adjoined.
Implicit in \cite{bffg} is the result that an inverse semigroup with
zero is strongly $E^*$-unitary if and only if it is a Rees quotient of
an $E$-unitary inverse semigroup. This was made explicit with an easy
proof in \cite{ben}. As well as \cite{bffg} and \cite{ben}, further
information about  strongly $E^*$-unitary inverse
semigroups, including many examples, can be found in the 
surveys \cite{lawson3} and \cite{mcal2}.

%%%%%%%%%%%%%%%%%%%%%%%%%%%%%%%%%%%%%%%%%%%%%%%%%%%%%%%%%%%%%%%%%%%%%%
We are interested in the connection between strongly $E^*$-unitary
inverse monoids and embeddability of cancellative monoids in groups. The
following result is due to Margolis \cite{stuart}; we include a proof
for completeness.

\begin{prop} \label{necandsuff}
Let $S$ be a cancellative monoid. Then $S$ is embeddable in a group if
and only if $IH^0(S)$ is strongly $E^*$-unitary.
\end{prop}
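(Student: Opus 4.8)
The plan is to establish the two implications separately, throughout using the description (recalled in Subsection~\ref{generalities}) of an arbitrary non-zero element of $IH^0(S)$ as a product $\r_{a_1}\r_{b_1}^{-1}\cdots\r_{a_n}\r_{b_n}^{-1}$ with $a_i,b_i\in S$.

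Suppose first that $S$ is embedded in a group $G$; identify $S$ with its image, so $S\sub G$. The idea is to define $\h\colon IH^0(S)\to G^0$ by $0\h=0$ and $\r\h=a_1b_1^{-1}\cdots a_nb_n^{-1}$ (the product taken in $G$) when $\r=\r_{a_1}\r_{b_1}^{-1}\cdots\r_{a_n}\r_{b_n}^{-1}\neq 0$. The one piece of real work is the following observation, proved by tracing an element through the composite: if $s\in\dom\r$ then, carrying out each step $x\r_{b_i}^{-1}=u$ (so that $ub_i=x$, whence $u=xb_i^{-1}$ in $G$), one finds $s\r=s\,(a_1b_1^{-1}\cdots a_nb_n^{-1})$ in $G$. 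Hence $\r\h=s^{-1}(s\r)$ for every $s\in\dom\r$; in particular $\r\h$ does not depend on the chosen expression for $\r$, so $\h$ is well defined. The axioms for a strongly $E^*$-unitary structure are then quick: $\h$ is $0$-restricted by construction; if $\r\s\neq0$ and $s\in\dom(\r\s)$ with $s\r=t$ and $t\s=w$, then $(\r\s)\h=s^{-1}w=(s^{-1}t)(t^{-1}w)=(\r\h)(\s\h)$, so $\h$ is a prehomomorphism; and $\r\h=1$ holds exactly when $s\r=s$ for all $s\in\dom\r$, i.e.\ when $\r=\e_{\dom\r}$ is a non-zero idempotent of $\II_S$. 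Thus $IH^0(S)$ is strongly $E^*$-unitary.

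Conversely, assume $\h\colon IH^0(S)\to G^0$ witnesses strong $E^*$-unitarity. Since $\r_c\r_d=\r_{cd}$ is never zero, $\h$ being $0$-restricted and a prehomomorphism on non-zero products makes $\f\colon S\to G$, $c\f=\r_c\h$, a monoid homomorphism; it remains to show $\f$ is injective, which is where strong $E^*$-unitarity is really used. Given $c\f=d\f$, put $\g=\r_c^{-1}\r_d\in IH^0(S)$: its domain is $Sc\neq\emptyset$, so $\g\neq0$, and $\r_c\g=\r_c\r_c^{-1}\r_d=\r_d$ because $\r_c\r_c^{-1}=\e_S$ is the identity of $IH^0(S)$. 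Applying $\h$ gives $\r_d\h=(\r_c\h)(\g\h)$ in $G$, hence $\g\h=1$, so by idempotent-purity $\g$ is a non-zero idempotent of $\II_S$, that is, $\g=\e_{Sc}$. Evaluating at $c\in Sc$ yields $c=c\g=(c\r_c^{-1})\r_d=1\r_d=d$. Therefore $\f$ is injective and embeds $S$ in $G$.

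I expect the two delicate points to be the well-definedness of $\h$ in the forward direction (the tracing computation above, which is what forces all the axioms to fall into place) and, in the converse, hitting on the auxiliary element $\r_c^{-1}\r_d$ whose image under $\h$ is forced to be $1$; once these are in place the rest is routine manipulation of partial bijections.
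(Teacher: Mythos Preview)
Your proof is correct and follows essentially the same approach as the paper's: in the forward direction you define $\h$ via the group product $a_1b_1^{-1}\cdots a_nb_n^{-1}$ and verify well-definedness by tracing an element through the composite, and in the converse you use the auxiliary element $\r_c^{-1}\r_d$ and idempotent-purity to force $c=d$. The only cosmetic differences are that the paper checks the prehomomorphism axiom by concatenating the two defining expressions rather than via your $s^{-1}t\cdot t^{-1}w$ computation, and in the final step the paper picks a generic $x\in\dom(\r_a^{-1}\r_b)$ and invokes left cancellation on $ua=ub$, whereas you evaluate at the specific element $c$ to get $c\r_c^{-1}=1$ directly.
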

\begin{proof}
Suppose first that $S$ is embedded in a group $G$. As noted in
Section~\ref{generalities}, every (non-zero) element $\r$ of $IH^0(S)$ can
be expressed as $\r_{a_1}\r_{b_1}^{-1}\dots  \r_{a_n}\r_{b_n}^{-1}$ for
some elements $a_1,b_1\dots, a_n,b_n$ of $S$. Define a mapping $\h:IH^0(S)
\to G^0$ by putting $0\h=0$ and 
$(\r_{a_1}\r_{b_1}^{-1}\dots  \r_{a_n}\r_{b_n}^{-1})\h = a_1b_1^{-1}\dots
a_nb_n^{-1}$ if $\r_{a_1}\r_{b_1}^{-1}\dots  \r_{a_n}\r_{b_n}^{-1}$ is
non-zero. 

If $\r = \r_{a_1}\r_{b_1}^{-1}\dots  \r_{a_n}\r_{b_n}^{-1}
= \r_{c_1}\r_{d_1}^{-1}\dots  \r_{c_m}\r_{d_m}^{-1}$ is non-zero, then for every
element $x$ in $\dom \r$, we have 
$$ x\r = x\r_{a_1}\r_{b_1}^{-1}\dots  \r_{a_n}\r_{b_n}^{-1} 
= x\r_{c_1}\r_{d_1}^{-1}\dots  \r_{c_m}\r_{d_m}^{-1}$$
so that in $G$, the following equation holds:
$$ x\r = xa_1b_1^{-1}\dots a_nb_n^{-1} = xc_1d_1^{-1}\dots c_md_m^{-1}$$
and hence $a_1b_1^{-1}\dots a_nb_n^{-1} = c_1d_1^{-1}\dots c_md_m^{-1}$.
Thus $\h$ is well-defined. 

By definition, $\h$ is 0-restricted. If $\r$ is as defined above and $\r\h
=1$, then we have $a_1b_1^{-1}\dots a_nb_n^{-1} =1$ and it follows that $x\r =x$
for all $x\in \dom \r$ so that $\r=I_{\dom\r}$ and $\h$ is idempotent
pure. Finally, it is clear from the definition that if $\r,\s\in
IH^0(S)$ and $\r\s\neq 0$, then $(\r\s)\h = (\r\h)(\s\h)$ so that $\h$
is a prehomomorphism. Thus $IH^0(S)$ is strongly $E^*$-unitary.

For the converse, we suppose that $IH^0(S)$ is strongly $E^*$-unitary and
consider 
  a 0-restricted idempotent pure prehomomorphism $\h:IH^0(S) \to G^0$
from $IH^0(S)$ 
to a group $G$ with zero adjoined. For each $a\in S$, we have the
element $\r_a$ of $IH(S)$, and since $\dom \r_a =S$, it follows that
$\r_a\r_b =\r_{ab}$ for any $a,b\in S$. Since $\h$ is 0-restricted,
$\r_a\h \in G$, and we have 
$$ (\r_a\h)(\r_b\h) = (\r_a\r_b)\h = \r_{ab}\h.$$
Hence  we can define $\psi:S\to G$ by $a\psi = \r_a\h$, and
$(a\psi)(b\psi)=(ab)\psi$, that is, $\psi$ is a homomorphism. It is also
injective, for if $a\psi =b\psi$, then $\r_a\h=\r_b\h$. Now
$\r_a^{-1}\r_b$ is a non-zero element of $IH^0(S)$, and so
$$ (\r_a^{-1}\r_b)\h = (\r_a^{-1}\h)(\r_b\h) = (\r_a^{-1}\h)(\r_a\h) =
(\r_a^{-1}\r_a)\h =1$$
since $\r_a^{-1}\r_a$ is a non-zero idempotent. But $\h$ is idempotent
pure, so $\r_a^{-1}\r_b$ is an idempotent, that is, it is the identity
map on its domain. Hence for $x\in \dom(\r_a^{-1}\r_b)$ we have
$x\r_a^{-1}\r_b = x$. Now $x\r_a^{-1} =u$ where $x=ua$ and also
$x=u\r_b=ub$ so that $ua=ub$ and $a=b$ by cancellation.

Thus $S$ is embedded in $G$.
\end{proof}

It is well known (and a consequence of Corollary~\ref{groupembedding}) 
that there is an embedding $\h:M(\G) \to G(\G)$ of the
graph monoid $M(\G)$ into the graph group $G(\G)$, and so 
we have the following.
\begin{cor} \label{polygraphmonoid3}
For any graph $\G$, the polygraph monoid $P(\G)$ is strongly $E^*$-unitary.
\end{cor}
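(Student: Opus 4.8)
The plan is to deduce the corollary directly from Proposition~\ref{necandsuff}, applied to the graph monoid $S = M(\G)$. Two ingredients need to be assembled first: that $M(\G)$ is a cancellative (not merely right cancellative) monoid, and that it embeds in a group.

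For cancellativity, I would observe that $M(\G)$ is by definition the graph product $\G_{v\in V}\N$ of copies of the additive monoid $\N$ of non-negative integers, each component being two-sided cancellative; hence by the two-sided case of Theorem~\ref{cancellative} the graph product $M(\G)$ is cancellative. For embeddability in a group, since $\N$ embeds in the additive group $\mathbb{Z}$, Corollary~\ref{groupembedding} yields an embedding of $M(\G) = \G_{v\in V}\N$ into $\G_{v\in V}\mathbb{Z} = G(\G)$, the graph group on $\G$; equivalently, one may simply invoke the embedding $\h : M(\G) \to G(\G)$ of the graph monoid into the graph group recalled immediately before the statement.

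With these facts in place, Proposition~\ref{necandsuff} applies to $S = M(\G)$: since $M(\G)$ is cancellative and embeddable in the group $G(\G)$, the inverse hull $IH^0(M(\G))$ is strongly $E^*$-unitary. As $P(\G)$ is by definition $IH^0(M(\G))$, we conclude that $P(\G)$ is strongly $E^*$-unitary, which is the assertion.

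There is essentially no obstacle here; the argument is purely an assembly of earlier results, and the proof in the paper can be reduced to a couple of sentences. The only point meriting any care is to use the \emph{two-sided} cancellativity statement of Theorem~\ref{cancellative} rather than just the right cancellative one, since Proposition~\ref{necandsuff} is stated for cancellative monoids.
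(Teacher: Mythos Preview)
Your proposal is correct and follows essentially the same approach as the paper: the paper simply notes the embedding $\h:M(\G)\to G(\G)$ (as a consequence of Corollary~\ref{groupembedding}) and deduces the result from Proposition~\ref{necandsuff}. Your version is slightly more explicit in verifying the two-sided cancellativity hypothesis of Proposition~\ref{necandsuff} via Theorem~\ref{cancellative}, a point the paper leaves implicit.
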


In the next section, we see that $P(\G)$ has another special property,
namely that it is $F^*$-inverse.

\section{$F^*$-inverse $0$-bisimple inverse monoids} \label{F*-inverse}

Recall that an inverse monoid $S$ is $F^*$-\textit{inverse} if every
non-zero element of $S$ is under a unique maximal element in the natural
partial order. If $S$ does not have a zero, it is said to be
$F$-inverse, and in this case, the definition is equivalent to every
$\s$-class containing a maximum element. (Here $\s$ is the minimum group
congruence on $S$.) However, we shall use the term
$F^*$-inverse to include both cases. It is easy to verify that every
$F^*$-inverse monoid is $E^*$-unitary. An $F^*$-inverse monoid which is
also strongly $E^*$-unitary is called \textit{strongly $F^*$-inverse}.
It follows from Corollary~\ref{polygraphmonoid3} and the results of
this section that a polygraph monoid is strongly $F^*$-inverse.

We find a criterion for a $0$-bisimple inverse monoid with cancellative
right unit submonoid to be
$F^*$-inverse in terms of a property of its right unit submonoid. We
remark that by a result of Lawson~\cite{lawson2}, for a $0$-bisimple
inverse monoid, having a cancellative
right unit submonoid is equivalent to being $E^*$-unitary.

\begin{lem} \label{maximal}
Let $C$ be a right cancellative monoid and suppose that $IH^0(C)$ is
$0$-bisimple. If $a,b\in C$ have only units as common left factors, then
$\r_a^{-1}\r_b$ is maximal in $IH^0(C)$.
\end{lem}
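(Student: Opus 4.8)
The plan is to unwind the definition of the natural partial order using the two characterisation results already in hand, namely Lemma~\ref{leq} and Corollary~\ref{equal}, together with the $0$-bisimplicity hypothesis via Proposition~\ref{0-bisimple}.

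First I would observe that $\r_a^{-1}\r_b$ is non-zero: its domain is $Ca$, which contains $a$, so it is not the empty map. Consequently, if $\r_a^{-1}\r_b \leq \s$ for some $\s \in IH^0(C)$ in the natural partial order, then $\s$ cannot be $0$, since $0$ is the least element. Because $IH^0(C)$ is $0$-bisimple, Proposition~\ref{0-bisimple} allows me to write the non-zero element $\s$ in the form $\r_c^{-1}\r_d$ for suitable $c,d \in C$.

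Next I would apply Lemma~\ref{leq} to the relation $\r_a^{-1}\r_b \leq \r_c^{-1}\r_d$, obtaining an element $x \in C$ with $a = xc$ and $b = xd$. Then $x$ is a common left factor of $a$ and $b$, so by the hypothesis of the lemma $x$ must be a unit of $C$. Finally Corollary~\ref{equal}, applied with the unit $x$, yields $\r_a^{-1}\r_b = \r_c^{-1}\r_d = \s$. Hence the only element of $IH^0(C)$ above $\r_a^{-1}\r_b$ is $\r_a^{-1}\r_b$ itself, which is exactly the assertion that $\r_a^{-1}\r_b$ is maximal.

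I do not anticipate any real obstacle: the argument is a short chain of the earlier results. The only points needing a little care are verifying that $\r_a^{-1}\r_b$ is non-zero (so that the element dominating it is also non-zero and Proposition~\ref{0-bisimple} applies to it), and reading ``maximal'' as maximal with respect to the natural partial order on $IH^0(C)$.
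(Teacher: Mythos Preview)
Your proof is correct and follows essentially the same approach as the paper: use Proposition~\ref{0-bisimple} to write any element above $\r_a^{-1}\r_b$ as $\r_c^{-1}\r_d$, then apply Lemma~\ref{leq} and Corollary~\ref{equal}. The paper's version is a terse two-line sketch invoking exactly these ingredients, and your argument simply fills in the details (including the non-zero observation) that the paper leaves implicit.
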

\begin{proof}
Since $IH^0(C)$ is $0$-bisimple, every element has the form
$\r_a^{-1}\r_b$ for some $a,b\in C$. The result is now immediate from
Lemma~\ref{leq} and its corollary.
\end{proof}

If $C$ is a cancellative monoid, we denote the partially ordered set of
principal right (resp. left) ideals by $P_r(C)$ (resp. $P_{\ell}(C)$).
From the remarks at the end of Section~\ref{graph products}, we see that
$P_r(C)$ is a join semilattice if and only if every pair of elements has 
an HCLF, and it is a meet semilattice if and only if  every pair of
elements has an LCRM. Corresponding remarks apply to $P_{\ell}(C)$.

\begin{prop} \label{mainlemma}
Let $C$ be a cancellative monoid and suppose that $IH^0(C)$ is
$0$-bisimple. Then $IH^0(C)$ is $F^*$-inverse if and only if $P_r(C)$
is a join semilattice.
\end{prop}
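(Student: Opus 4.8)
The plan is to translate the statement into the internal structure of $IH^0(C)$ furnished by Proposition~\ref{0-bisimple}. Since $IH^0(C)$ is $0$-bisimple, every non-zero element has the form $\r_a^{-1}\r_b$ with $a,b\in C$; by Lemma~\ref{leq}, $\r_a^{-1}\r_b\leq\r_c^{-1}\r_d$ if and only if $a=xc$ and $b=xd$ for some $x\in C$; and by Corollary~\ref{equal}, $\r_a^{-1}\r_b=\r_c^{-1}\r_d$ exactly when $c=ua$, $d=ub$ for a unit $u$. I will also use Lemma~\ref{maximal} (if $p,q$ have only units as common left factors, then $\r_p^{-1}\r_q$ is maximal) together with the remark recorded just before the proposition, that $P_r(C)$ is a join semilattice precisely when every pair of elements of $C$ has an HCLF. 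After that, everything is a matter of left/right cancellation in $C$.

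For the ``if'' direction, suppose every pair in $C$ has an HCLF and let $\r_a^{-1}\r_b$ be a non-zero element. First I would produce a maximal element above it: take an HCLF $\delta$ of $a,b$, write $a=\delta p$ and $b=\delta q$, and check that $p,q$ have only units as common left factors --- if $p=fp'$ and $q=fq'$, then $\delta f$ is a common left factor of $a,b$, so $\delta=\delta fk$ for some $k$, and left cancellation of $\delta$ makes $f$ a unit. Thus $\r_p^{-1}\r_q$ is maximal (Lemma~\ref{maximal}) and $\r_a^{-1}\r_b\leq\r_p^{-1}\r_q$ (Lemma~\ref{leq}). For uniqueness, let $\mu=\r_c^{-1}\r_d$ be any maximal element with $\r_a^{-1}\r_b\leq\mu$, and pick $x$ with $a=xc$, $b=xd$. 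Since $x$ is a common left factor of $a,b$ it left-divides $\delta$, say $\delta=xh$; cancelling $x$ on the left in $xc=a=\delta p=xhp$ gives $c=hp$, and similarly $d=hq$. Hence $\mu=\r_c^{-1}\r_d\leq\r_p^{-1}\r_q$ by Lemma~\ref{leq}, and maximality of $\mu$ forces $\mu=\r_p^{-1}\r_q$. So $\r_a^{-1}\r_b$ lies below exactly one maximal element, and $IH^0(C)$ is $F^*$-inverse.

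For the ``only if'' direction, suppose $IH^0(C)$ is $F^*$-inverse and fix $a,b\in C$; I want to show that $(a,b)$ has an HCLF. Let $\mu=\r_c^{-1}\r_d$ be the (necessarily non-zero) unique maximal element above $\r_a^{-1}\r_b$, and pick $x$ with $a=xc$, $b=xd$; the claim is that $x$ is an HCLF of $a,b$. It is certainly a common left factor. Given any common left factor $x'$, write $a=x'a'$ and $b=x'b'$; then $\r_a^{-1}\r_b\leq\r_{a'}^{-1}\r_{b'}$ by Lemma~\ref{leq}, so $\r_{a'}^{-1}\r_{b'}$ is non-zero and lies below a unique maximal element $\nu=\r_{c'}^{-1}\r_{d'}$, with $a'=zc'$, $b'=zd'$ for some $z$. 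Now $a=x'zc'$ and $b=x'zd'$ place $\r_a^{-1}\r_b$ below $\nu$ as well, so uniqueness gives $\nu=\mu$, that is, $\r_{c'}^{-1}\r_{d'}=\r_c^{-1}\r_d$; by Corollary~\ref{equal} there is a unit $u$ with $c'=uc$, $d'=ud$. Then $a=x'a'=x'zuc$, and right cancellation of $c$ against $a=xc$ yields $x=x'zu$, so $x'$ left-divides $x$. Hence $x$ is an HCLF, every pair of elements of $C$ has one, and $P_r(C)$ is a join semilattice.

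I expect the delicate part to be the bookkeeping in the two uniqueness arguments, specifically avoiding circularity. In the ``if'' direction the key point is to compare an arbitrary maximal element above $\r_a^{-1}\r_b$ not with another arbitrary one but with the canonical witness $\r_p^{-1}\r_q$ coming from the HCLF, exploiting that a maximal element lying \emph{below} $\r_p^{-1}\r_q$ must coincide with it. In the ``only if'' direction the subtlety is that, to show the multiplier $x$ absorbs an arbitrary common left factor $x'$, one must route through the unique maximal element above the \emph{auxiliary} element $\r_{a'}^{-1}\r_{b'}$. Beyond that, the only things to watch are keeping straight which cancellation is in play --- left cancellation to strip off $\delta$ or $x$, right cancellation to strip off $c$ --- and remembering that equalities $\r_a^{-1}\r_b=\r_c^{-1}\r_d$ hold only up to a unit.
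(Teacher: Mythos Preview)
Your proof is correct and follows essentially the same route as the paper's: in both directions you translate between the natural order on $IH^0(C)$ (via Lemma~\ref{leq} and Corollary~\ref{equal}) and left divisibility in $C$, producing the canonical maximal element from an HCLF in one direction and reading off an HCLF from the unique maximal element in the other. The only cosmetic difference is that in the ``only if'' direction you pass through Corollary~\ref{equal} to identify the two maximal elements up to a unit, whereas the paper applies Lemma~\ref{leq} directly to the inequality $\rho_{a'}^{-1}\rho_{b'}\leq\rho_c^{-1}\rho_d$; the content is the same.
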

\begin{proof}
Suppose that every pair of elements of $C$ have a HCLF and let $\a$ be a
non-zero element of $IH^0(C)$. Then $\a = \r_a^{-1}\r_b$ for some $a,b\in
C$. Let $x$ be an HCLF of $a$ and $b$, say $a=xc$ and $b=xd$. Then the
only common left factors of $c$ and $d$ are units, so by
Lemma~\ref{maximal}, $\r_c^{-1}\r_d$ is maximal. But $\r_a^{-1}\r_b \leq
\r_c^{-1}\r_d$ by Lemma~\ref{leq}, so $\a$ lies beneath a maximal
element.

If $\r_a^{-1}\r_b \leq \r_p^{-1}\r_q$  for some $p,q\in C$, then by
Lemma~\ref{leq}, $a=yp$
and $b=yq$ for some $q\in C$. Hence $x=yz$ for some $z\in C$ so that
$a=yp=yzc$ and $b=yq=yzd$. By left cancellation, $p=zc$ and $q=zd$ so
that $\r_p^{-1}\r_q \leq \r_c^{-1}\r_d$ by Lemma~\ref{leq}. Thus
$\r_c^{-1}\r_d$ is the unique maximal element above $\r_a^{-1}\r_b$, and
$IH^0(C)$ is $F^*$-inverse.

Conversely, suppose that $IH^0(C)$ is $F^*$-inverse, and let $a,b\in C$.
Then there is a unique maximal element $\r_c^{-1}\r_d$ above
$\r_a^{-1}\r_b$. By Lemma~\ref{leq}, $a=xc$ and $b=xd$ for some $x\in C$.
If $y$ is a common left factor of $a$ and $b$, then $a=yp$ and $b=yq$
for some $p,q\in C$ so that  $\r_a^{-1}\r_b \leq \r_p^{-1}\r_q$. Now
$\r_p^{-1}\r_q \leq \a$ for some maximal $\a$, and by uniqueness,
$\a=\r_c^{-1}\r_d$. It follows that  $p=zc$ and $q=zd$ for some $z$ so
that $xc= a=yzc$ whence $x=yz$ and $y$ is a left factor of $x$. Thus $x$
is a HCLF of $a$ and $b$.
\end{proof}

An abstract version of this proposition is given in the following
result. 

\begin{prop} \label{E-unitary}
Let $S$ be an $E^*$-unitary $0$-bisimple ($E$-unitary bisimple) inverse
monoid, and let $C$ be its right unit submonoid. Then $S$ is
$F^*$-inverse ($F$-inverse) if and only if $P_r(C)$
is a join semilattice.
\end{prop}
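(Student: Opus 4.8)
The plan is to reduce Proposition~\ref{E-unitary} to Proposition~\ref{mainlemma}, which already establishes the corresponding statement for inverse hulls of cancellative monoids. First I would recall the cited result of Lawson~\cite{lawson2}: for a $0$-bisimple inverse monoid $S$, being $E^*$-unitary is equivalent to the right unit submonoid $C$ being cancellative (rather than merely right cancellative). So under the hypotheses, $C$ is a cancellative monoid. Next I would invoke the structure theorem mentioned after Proposition~\ref{0-bisimple}, namely that every $0$-bisimple inverse monoid $M$ is isomorphic to $IH^0(C)$ where $C$ is its right unit submonoid (attributed to Nivat and Perrot \cite{np}); in the bisimple case one gets $S \cong IH(C)$ with no zero. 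Thus $S \cong IH^0(C)$ (respectively $IH(C)$) for the cancellative monoid $C$.

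Having made this identification, the proposition is essentially immediate: Proposition~\ref{mainlemma} says that for a cancellative monoid $C$ with $IH^0(C)$ $0$-bisimple, $IH^0(C)$ is $F^*$-inverse if and only if $P_r(C)$ is a join semilattice. Since $S \cong IH^0(C)$ and $S$ is $0$-bisimple by hypothesis, we conclude $S$ is $F^*$-inverse if and only if $P_r(C)$ is a join semilattice. The only point requiring a small remark is the bisimple/$F$-inverse case: when $S$ has no zero, $IH(C)$ has no empty map, $C$ is a group-free-of-zero-divisors situation is not needed, and one checks that the proof of Proposition~\ref{mainlemma} goes through verbatim with ``$F^*$-inverse'' replaced by ``$F$-inverse'' and ``non-zero element'' replaced by ``element'' throughout — indeed $0$-bisimple with the zero removed is exactly bisimple, and the maximality arguments via Lemma~\ref{leq} and Corollary~\ref{equal} never actually use the zero.

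I would also want to double-check one subtlety: Proposition~\ref{mainlemma} requires $IH^0(C)$ to be $0$-bisimple, whereas here we are handed that $S$ is $0$-bisimple and then identify $S$ with $IH^0(C)$. That is fine, since the isomorphism transports $0$-bisimplicity. Conversely if one wants the statement purely in terms of $C$ one notes $C$ being the right unit submonoid of a $0$-bisimple inverse monoid forces $C$ to be left LCM (by Proposition~\ref{0-bisimple}, since $S \cong IH^0(C)$), so everything is consistent.

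The main obstacle is not really a difficulty but a matter of citing the right two external facts cleanly and making sure the ``$E$-unitary bisimple $\Leftrightarrow$ cancellative right unit submonoid, no zero'' version of Lawson's result is stated correctly; once those are in hand, the proof is a two-line transport of Proposition~\ref{mainlemma} along an isomorphism. I would therefore keep the proof short: state the identification $S \cong IH^0(C)$ with $C$ cancellative, remark that the $0$-bisimple case is exactly Proposition~\ref{mainlemma}, and note that the bisimple case follows by the identical argument with ``zero'' deleted throughout.

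\begin{proof}
By a result of Lawson~\cite{lawson2}, since $S$ is $0$-bisimple ($\!$bisimple$\!$) and $E^*$-unitary ($\!$$E$-unitary$\!$), its right unit submonoid $C$ is cancellative. Moreover, every $0$-bisimple inverse monoid is isomorphic to $IH^0(C')$, and every bisimple inverse monoid to $IH(C')$, where $C'$ is its right unit submonoid \cite{np}; hence $S \cong IH^0(C)$ in the $0$-bisimple case and $S \cong IH(C)$ in the bisimple case. This isomorphism transports $0$-bisimplicity, so $IH^0(C)$ is $0$-bisimple.

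In the $0$-bisimple case, Proposition~\ref{mainlemma} now applies directly: $IH^0(C)$ is $F^*$-inverse if and only if $P_r(C)$ is a join semilattice, and transporting along the isomorphism $S\cong IH^0(C)$ gives that $S$ is $F^*$-inverse if and only if $P_r(C)$ is a join semilattice.

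In the bisimple case, the proof of Proposition~\ref{mainlemma} carries over verbatim with the words ``non-zero'' deleted throughout and ``$F^*$-inverse'' replaced by ``$F$-inverse'': a bisimple inverse monoid is precisely a $0$-bisimple one with the zero removed, every element of $IH(C)$ has the form $\r_a^{-1}\r_b$, and the maximality arguments using Lemma~\ref{leq} and Corollary~\ref{equal} never invoke the zero. Hence $S \cong IH(C)$ is $F$-inverse if and only if $P_r(C)$ is a join semilattice.
\end{proof}
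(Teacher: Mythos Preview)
Your proposal is correct and follows essentially the same approach as the paper: both cite the Nivat--Perrot result to identify $S$ with $IH^0(C)$, invoke Lawson's theorem to conclude that $C$ is cancellative from the $E^*$-unitary hypothesis, and then apply Proposition~\ref{mainlemma}. Your additional remarks on the bisimple case and on transporting $0$-bisimplicity along the isomorphism are reasonable elaborations but not strictly needed, since the paper treats the bisimple case as implicitly covered by the $0$-bisimple argument.
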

\begin{proof}
Since $S$ is $0$-bisimple, the right unit submonoid $C$ of $S$ is a left
LCM monoid by Proposition~1 of \cite{np}, and from the same proposition
we have that $S$ is isomorphic to $IH^0(C)$.
By Theorem~5 of \cite{lawson2}, $C$ is
cancellative so that the result is now immediate by Proposition~\ref{mainlemma}. 
\end{proof}

A \textit{Garside monoid} is defined to be a cancellative monoid whose
only unit is the identity, that is a lattice with respect to both left
and right divisibility, and that satisfies additional finiteness
conditions (see, for example, \cite{dehornoy}). 
Such monoids have proved to be important in the study of algebraic and
algorithmic properties of braid groups and, more generally, Artin groups
of finite type. We note that if $C$ is a Garside monoid, then since the
identity is the only unit, regarded as a partially ordered set under
left divisibility, $C$ is order-isomorphic to $P_r(C)$ under reverse
inclusion. Thus $P_r(C)$ is a lattice so that $IH(C)$ does not have a
zero, and hence the next corollary follows
immediately from Proposition~\ref{0-bisimple} and
Proposition~\ref{mainlemma}.

\begin{cor}
The inverse hull of a Garside monoid $C$ is a bisimple $F$-inverse monoid.
\end{cor}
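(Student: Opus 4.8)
The plan is to assemble the corollary directly from the two propositions just proved, together with the structural facts about Garside monoids. First I would recall that a Garside monoid $C$ is by definition cancellative, has trivial group of units, and is a lattice under left divisibility (equivalently under right divisibility). In particular $C$ is a left LCM monoid: any two elements have a common left multiple (since the monoid is a lattice, the join under left divisibility exists), and having trivial units forces these LCLMs to be genuinely least. So by Proposition~\ref{0-bisimple}, $IH^0(C)$ is $0$-bisimple.

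Next I would argue that $IH(C)$ has no zero, i.e.\ the empty map is not an element. The key observation is that since the only unit of $C$ is the identity, the map sending $c$ to the principal left ideal $Cc$ is an order-isomorphism from $C$ (ordered by left divisibility, so $b \leq a$ iff $b$ is a right factor of $a$) onto $P_\ell(C)$ ordered by reverse inclusion. Because $C$ is a lattice under left divisibility, $P_\ell(C)$ is a lattice too; in particular any two principal left ideals have nonempty intersection (a common left multiple always exists). Hence by the formula for domains following Proposition~\ref{0-bisimple}, every element of $IH^0(C)$ obtained as a product of inner right translations and their inverses has nonempty domain, so the empty map never arises and $IH(C) = IH^0(C)$ has no zero. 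This is the step that needs the finiteness/lattice hypothesis rather than just the semilattice hypothesis, and it is where I would be most careful: I must make sure that ``lattice under left divisibility'' really does give a common left multiple for \emph{every} pair, not just pairs with an upper bound, and that this is exactly what rules out the empty map.

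Then, with $IH^0(C)$ known to be $0$-bisimple and without zero, it is bisimple. Since $C$ is cancellative, Proposition~\ref{mainlemma} applies and tells us that $IH^0(C)$ is $F^*$-inverse precisely when $P_r(C)$ is a join semilattice. But the dual of the order-isomorphism argument above shows $P_r(C)$ is order-isomorphic (under reverse inclusion) to $C$ ordered by right divisibility, which is again a lattice by the Garside hypothesis; in particular $P_r(C)$ has all binary joins. Therefore $IH^0(C) = IH(C)$ is $F^*$-inverse, and since it has no zero this is the same as being $F$-inverse.

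Putting the pieces together: $IH(C)$ is bisimple by the $0$-bisimplicity of $IH^0(C)$ plus absence of a zero, and it is $F$-inverse by Proposition~\ref{mainlemma} together with the fact that $P_r(C)$ is a (join) semilattice. The only genuine obstacle is the bookkeeping around the zero: one must verify that the lattice structure on $C$ forces \emph{every} intersection $Ca \cap Cb$ to be nonempty, so that $\emptyset \notin IH^0(C)$; everything else is a direct citation of Propositions~\ref{0-bisimple} and~\ref{mainlemma}.
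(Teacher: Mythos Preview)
Your proposal is correct and follows essentially the same route as the paper: use that a Garside monoid is cancellative with trivial units and is a lattice under both divisibility orders, so that the hypotheses of Propositions~\ref{0-bisimple} and~\ref{mainlemma} are met and the inverse hull has no zero. You are in fact more careful than the paper about \emph{which} lattice condition rules out the zero---it is the lattice structure on $P_\ell(C)$ (existence of common left multiples) that forces every $Ca\cap Cb$ to be nonempty, whereas the paper's one-line justification mentions only $P_r(C)$. One small terminological slip to fix: what you call ``left divisibility'' with $b\leq a$ iff $b$ is a right factor of $a$ is standardly called \emph{right} divisibility, and it is the join under right divisibility that yields least common left multiples; since a Garside monoid is a lattice under both orders this does not affect the argument, but the labels should be swapped.
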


We now turn to Artin monoids. Recall that an Artin monoid is a monoid
generated by a non-empty set $X$ subject to relations of the form 
$xyx\dots = yxy\dots $ where $x,y \in X$, both sides of a given relation
have the same length, and at most one such relation holds for each pair 
$x,y\in X$. Thus graph monoids are Artin monoids where both sides of
each defining relation have length 2.
The associated \textit{Artin group} of a given Artin monoid $A$
is the group given by the 
presentation of $A$ regarded as a group presentation. 
Rather than the definition, we use
some of the properties of Artin monoids which we now recall.
 The first three in the list below can be found in \cite{bs}, the third is
also given in \cite{deligne}, and the fourth is from \cite{paris}. Let
$A$ be an Artin monoid. Then we have the following.

\begin{enumerate}
\item [1.] $A$ is cancellative.
\item [2.] The intersection of two principal left (right) ideals of $A$ is
either empty or principal.
\item [3.] $A$ is left (and right) Ore if and only if it is of finite
type. 
\item [4.] $A$ embeds in its associated Artin group.
\end{enumerate}

\begin{prop}
The inverse hull\  $IH(A)$ of an Artin monoid $A$ is strongly $F^*$-inverse.
\end{prop}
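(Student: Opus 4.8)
The plan is to establish the two defining ingredients of ``strongly $F^*$-inverse'' --- that $IH^0(A)$ is strongly $E^*$-unitary, and that it is $F^*$-inverse --- and then combine them. I work throughout with $IH^0(A)$: if $A$ is not of finite type then, by property~3, $A$ is not left Ore, the empty map already belongs to $IH(A)$, and $IH^0(A)=IH(A)$; if $A$ is of finite type then $IH(A)$ has no zero and $IH^0(A)=IH(A)\cup\{0\}$. Either way the assertion is the statement that $IH^0(A)$ is strongly $F^*$-inverse.

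First I would observe that $A$ is a left LCM monoid: it is right cancellative by property~1, and by property~2 the intersection of any two principal left ideals of $A$ is empty or principal. Hence $IH^0(A)$ is $0$-bisimple by Proposition~\ref{0-bisimple}, and since $A$ is cancellative, both Proposition~\ref{necandsuff} and Proposition~\ref{mainlemma} are available with $C=A$. Strong $E^*$-unitarity is then immediate: by property~4 the monoid $A$ embeds in its associated Artin group, so Proposition~\ref{necandsuff} gives that $IH^0(A)$ is strongly $E^*$-unitary.

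It remains to show that $IH^0(A)$ is $F^*$-inverse, and by Proposition~\ref{mainlemma} this amounts to showing that $P_r(A)$ is a join semilattice, i.e.\ that any two elements $a,b$ of $A$ have a highest common left factor. This is the crux of the argument, because property~2 supplies information about intersections of principal ideals (least common multiples), whereas what is needed here is the dual, ``join''-type statement. I would argue it directly. Since every defining relation of an Artin monoid has both sides of the same length, the length function on the free monoid on the generators descends to an additive length function $\ell$ on $A$ with $\ell(u)=0$ only when $u=1$; consequently every common left factor of $a$ and $b$ has length at most $\min\{\ell(a),\ell(b)\}$. If $d$ and $d'$ are common left factors of $a$ and $b$, then $a,b\in dA\cap d'A$, so $dA\cap d'A$ is a nonempty, hence principal, right ideal $eA$; the element $e$ is again a common left factor of $a$ and $b$ and has both $d$ and $d'$ as left factors. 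Thus the common left factors of $a$ and $b$ form an upward-directed set in the divisibility order, and a common left factor of maximal length is therefore the largest one, i.e.\ a highest common left factor. (Alternatively one may simply quote the existence of greatest common left divisors in Artin monoids from \cite{bs}.) Hence $P_r(A)$ is a join semilattice, so $IH^0(A)$ is $F^*$-inverse by Proposition~\ref{mainlemma}, and combining this with the previous paragraph, $IH^0(A)$ is strongly $F^*$-inverse. The main obstacle is precisely this passage from meets to joins; everything else is a direct application of the earlier propositions.
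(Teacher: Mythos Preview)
Your proof is correct and follows essentially the same approach as the paper: both invoke Proposition~\ref{necandsuff} together with property~4 for strong $E^*$-unitarity, Proposition~\ref{0-bisimple} for $0$-bisimplicity, and Proposition~\ref{mainlemma} to reduce to the existence of HCLFs, which is then deduced from the homogeneity of the Artin relations combined with the right-hand version of property~2. The only cosmetic difference is that the paper intersects the finitely many principal right ideals generated by all common left factors at once, whereas you argue pairwise via directedness and pick a common left factor of maximal length; the content is the same.
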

\begin{proof}
It follows from Proposition~\ref{necandsuff} and item (4) above that 
$IH(A)$ is strongly $E^*$-unitary
($E$-unitary in case $A$ is of finite type). Moreover, we have already
noted that condition~$(d)$ of Proposition~\ref{0-bisimple} is satisfied.
Hence $IH(A)$ is $0$-bisimple (bisimple if $A$ is of finite type).

Thus by Proposition~\ref{mainlemma}, it is enough to show that any two elements of
$A$ have a HCLF. This is noted in \cite{bs}. The argument is as
follows. Since the defining relations of $A$ are homogeneous (i.e.,
the two words in each relation have the same length), it follows that
any factor (left or right) of an element $w$ of $A$ has length at most 
$|w|$. Hence any element of $A$ has only finitely many left factors.
Let $x_1,\dots,x_k$ be the common left factors of two elements $v$ and
$w$ of $A$. Then by the right handed version of item 2,
$$ x_1A \cap \dots \cap x_kA = xA$$
for some $x$. (that is, $x$ is the least common left multiple of
$x_1,\dots,x_k$.) Now $x$ is a common left factor of $v$ and $w$, (so
must be one of the $x_i$s) and is clearly the HCLF of $v$ and $w$. 
\end{proof}

Since a graph monoid is a special type of Artin monoid, we immediately
have the following corollary.

\begin{cor}
For a graph $\G$, the polygraph monoid is a strongly $F^*$-inverse monoid.
\end{cor}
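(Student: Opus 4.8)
The plan is to read this off directly from the preceding proposition, which says that the inverse hull of any Artin monoid is strongly $F^*$-inverse. The single step that actually needs doing is to confirm that a graph monoid is an Artin monoid. This was already observed at the start of Section~\ref{F*-inverse}: the defining presentation of $M(\G)$ is $\langle x_v\ (v\in V) \mid x_ux_v = x_vx_u \text{ whenever } (u,v)\in E\rangle$, which has at most one relation for each (unordered) pair of generators and both sides of every relation have length $2$, so it is of exactly the form required of an Artin presentation. Applying the preceding proposition with $A = M(\G)$ then gives that the inverse hull of $M(\G)$ is strongly $F^*$-inverse, and since the polygraph monoid $P(\G)$ is by definition $IH^0(M(\G))$, this is the desired conclusion. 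I would not re-prove cancellativity, the principal-ideal-intersection property, or embeddability in the associated group, since all of these are absorbed into the Artin-monoid proposition.

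The only place one has to be slightly careful --- and it is the nearest thing to an obstacle here --- is the distinction between $IH(A)$ and $IH^0(A)$, since the preceding proposition is phrased for $IH(A)$ whereas $P(\G)$ is defined as $IH^0(M(\G))$. But this causes no real difficulty: the proof of that proposition passes through Proposition~\ref{necandsuff} (strong $E^*$-unitarity of $IH^0$) and Proposition~\ref{mainlemma} ($F^*$-inverseness of $IH^0$), both of which are stated for $IH^0$, so what is genuinely established there is that $IH^0(A)$ is strongly $F^*$-inverse --- and this holds uniformly whether or not a zero has to be adjoined, i.e. whether or not $\G$ is complete. Hence, beyond the identification $P(\G) = IH^0(M(\G))$ and the one-line verification that $M(\G)$ is an Artin monoid, no additional argument is needed.
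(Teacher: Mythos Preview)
Your proposal is correct and matches the paper's own argument essentially verbatim: the paper simply remarks that a graph monoid is a special type of Artin monoid and invokes the preceding proposition. Your extra paragraph on the $IH$ versus $IH^0$ distinction is a fair clarification but goes beyond what the paper bothers to say.
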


\section*{Acknowledgements}
The work reported in this paper was started during a visit by the first
author to Carleton University where the second author was conducting
research supported by the  Leverhulme Trust.
 The first author would like to thank Benjamin
Steinberg for making his visit possible, and the School of Mathematics
and Statistics
at Carleton for its hospitality. The paper was completed when the second
author was an RCUK Academic Fellow at the
University of Manchester.


\begin{thebibliography}{KK}

\bibitem{beauregard}
R.\:A.\:Beauregard, Right LCM domains. \textit{Proc.\:Amer.\:Math.\:Soc.}
\textbf{30} (1971), 1--7.

\bibitem{bs}
E.\:Brieskorn and K.\:Saito, Artin-Gruppen und Coxeter-Gruppen. 
\textit{Invent.\:Math} \textbf{17} (1972), 245--271.

%\bibitem{brin}
%M.\:G.\:Brin, On the Zappa-Sz\'{e}p product, \textit{Comm. Algebra},
%\textbf{33} (2005), 393--424.

\bibitem{bffg}
S.\:Bulman-Fleming, J.\:Fountain and V.\:Gould, Inverse semigroups with
zero: covers and their structure. 
\textit{J.\:Austral.\:Math.\:Soc.} \textbf{67} (1999), 15--30.

\bibitem{charney}
R.\ Charney, An introduction to right-angled Artin groups,
\textit{Geom.\:Dedicata} \textbf{125} (2007), 141--158.

\bibitem{cherubini}
A.\:Cherubini and M.\:Petrich, The inverse hull of right cancellative
semigroups. \textit{J.\:Algebra} \textbf{111} (1987), 74--113.


\bibitem{clifford}
A.\:H.\:Clifford, A class of $d$-simple semigroups,
\textit{Amer.\:J.\:Math.} \textbf{75}  (1953), 547--556.

\bibitem{cp61}
A.\:H.\:Clifford and G.\:B.\:Preston, \emph{The algebraic theory of
semigroups}, vol.~I,
American Math.  Soc., Providence, R.I., 1961.

%\bibitem{cp67}
%A.\:H.\:Clifford and G.\:B.\:Preston, 
%\emph{The algebraic theory of semigroups}, vol.~II, American Math.
%Soc.,
%Providence, R.I., 1967.

%\bibitem{cohn1}
%P.\:M.\:Cohn, Noncommutative unique factorization domains.
%\textit{Trans.\:Amer.\:Math.\:Soc.} \textbf{109} (1963), 313--332.
%Correction \textit{ibid.} \textbf{119} (1965), 552.


%\bibitem{cohn2}
%P.\:M.\:Cohn, \emph{Free rings and their relations}, Academic Press,
%London, 1971.

\bibitem{crisplaca}
J.\ Crisp and M.\ Laca, On the Toeplitz algebras of right-angled
and finite-type Artin groups. \textit{J.\ Australian Math.\ Soc.}
\textbf{72} (2002), 223--245.

\bibitem{dehornoy1}
P.\:Dehornoy, Alternating normal forms for braids and locally Garside
monoids.  arXiv:math.GR/0702592

\bibitem{dehornoy}
P.\:Dehornoy, Groupes de Garside.
\textit{Ann.\:Scient.\:\'{E}c.\:Norm.\:Sup.}, \textbf{35} (2002),
267--306.

%\bibitem{dehornoyparis}
%P.\:Dehornoy and L.\:Paris, Gaussian groups and Garside groups, two
%generalisations of Artin groups, \textit{Proc.\:London Math.\:Soc}
%\textbf{79} (1999), 569--604.

\bibitem{deligne}
P.\:Deligne, Les immeubles des groupes de tresses
g\'{e}n\'{e}ralis\'{e}s.
\textit{Invent.\:Math} \textbf{17} (1972), 273--302. 

\bibitem{diekert}
V.\:Diekert, \textit{Combinatorics on traces}, Lecture Notes in Computer
Science vol.\:454, Springer-Verlag, 1990.

%\bibitem{doro}
%M.\:P.\:Dorofeeva, Hereditary and semi-hereditary monoids,
%\textit{Semigroup Forum} \textbf{9} (1975), 294--309.

%\bibitem{doss}
%R.\:Doss, Sur l'immersion d'un semigroupe dans un groupe.
%\textit{Bull.\:Sci.\:Math.} \textbf{72} (1948), 139--150.

%\bibitem{droste}
%M.\:Droste and D.\:Kuske, Recognizable languages in divisibility
%monoids, \textit{Mathematical Structures in Computer Science}
%\textbf{11} (2001), 743--770.

\bibitem{fohry}
E.\:Fohry and D.\:Kuske, On graph products of automatic and biautomatic
monoids. \textit{Semigroup Forum} \textbf{72} (2006), 337--352.

\bibitem{green}
E.\:R.\:Green, \textit{Graph products of groups}. PhD Thesis, University
of Leeds, 1990.

\bibitem{hermiller}
S.\:Hermiller and J.\:Meier, Algorithms and geometry for graph products
of groups. \textit{J.\:Algebra} \textbf{171} (1995), 230--257.

\bibitem{howie}
J.\:M.\:Howie, \emph{Fundamentals of semigroup theory}, Oxford U.\:P.,
Oxford, 1995.

\bibitem{knox}
N.\:Knox, The inverse hull of the free semigroup on a set $X$.
\textit{Semigroup Forum} \textbf{16} (1978), 345--354.

%\bibitem{kuske}
%D.\:Kuske, Divisibility monoids: presentation, word problem, and
%rational language. In \textit{FCT 2001}, 227--239. Springer Lecture
%Notes in Computer Science vol. 2010, (2001).

\bibitem{lawson}
M.\:V.\:Lawson, \textit{Inverse semigroups:\:the theory of partial
symmetries}, World Scientific, Singapore, 1998.

\bibitem{lawson2}
M.\:V.\:Lawson, The structure of $0$-$E$-unitary inverse semigroups:\:I. 
The monoid case. \textit{Proc.\:Edinburgh Math.\:Soc.} \textbf{42}
 (1999), 497--520.

\bibitem{lawson3}
M.\:V.\:Lawson, $E^*$-unitary inverse semigroups. In 
G.\:M.\:S.\:Gomes, J.-E.\:Pin and P.\:Silva (editors),
\textit{Semigroups, Algorithms, Automata and Languages}, 155--194. 
  World Scientific, Singapore, 2002. 

%\bibitem{lawson4}
%M.\:V.\:Lawson, A monoid associated with a self-similar group action,
%\textit{preprint} 2006.

%\bibitem{mcal1}
%D.\:B.\:McAlister, 0-bisimple inverse semigroups, \textit{Proc.\:London
%Math.\:Soc.} \textbf{28} (1974), 193--221.


\bibitem{mcal}
D.\:B.\:McAlister, One-to-one partial right translations of a right
cancellative semigroup. \emph{J.\:Algebra} \textbf{43} (1976), 231--251.

\bibitem{mcal2}
D.\:B.\:McAlister, A random ramble in inverse semigroups II:\:an
introduction to $E^*$-unitary inverse semigroups -- from an old
fashioned perspective.
In I.\:M.\:Ara\'{u}jo, M.\:J.\:J.\:Branco, V.\:H.\:Fernandes,
G.\:M.\:S.\:Gomes (Editors)
\textit{Proceedings of the Workshop on Semigroups and Languages (Lisbon} 2002)
World Scientific, 2004.

\bibitem{mcmc}
D.\:B.\:McAlister and R.\:McFadden, Zig-zag representations and inverse
semigroups. 
\textit{J.\:Algebra} \textbf{32} (1974), 178--206.

\bibitem{stuart}
S.\:W.\:Margolis, \textit{Talk at University of York Algebra Seminar}, 21
May 2003.

\bibitem{meakinsapir}
J.\:Meakin and M.\:Sapir, Congruences on free monoids and submonoids
 of polycyclic monoids. 
\textit{J.\:Austral.\:Math.\:Soc.\:Ser.\:A} \textbf{54} (1993), 236--253.

%\bibitem{munn1} 
%W.\:D.\:Munn, Uniform semilattices and bisimple inverse semigroups,
%\textit{Quart.\:J.\:Math.\:Oxford} \textbf{17} (1966), 151--170.

%\bibitem{munn2} 
%W.\:D.\:Munn, 0-bisimple inverse semigroups,
%\emph{J.\:Algebra} \textbf{15} (1970), 570--588.

%\bibitem{selfsimilar}
%V.\:Nekrashevych, \textit{Self-similar groups},
%American Math.  Soc., Providence, R.I., 2005.

\bibitem{np}
M.\:Nivat and J.-F.\:Perrot, Une g\'{e}n\'{e}ralisation du mono\"{i}de bicyclique.
\textit{C.\:R.\:Acad.\:Sci.\:Paris S\'{e}r.\:A-B} \textbf{271} (1970),
A824--A827.

\bibitem{paris}
L.\:Paris, Artin monoids inject in their groups.
\textit{Comment.\:Math.\:Helv.} \textbf{77} (2002), 609--637.

%\bibitem{petrich}
%M.\:Petrich, \textit{Inverse semigroups}, Wiley, New York, 1984.

\bibitem{rees1}
D.\:Rees, On the group of a set of partial transformations. 
\textit{J.\:London Math.\:Soc.} \textbf{22} (1948), 281--284.

%\bibitem{rees2}
%D.\:Rees, On the ideal structure of a semi-group satisfying a
%cancellation law, 
%\textit{Quart.\:J.\:Math.\:Oxford} \textbf{19} (1948), 101--108.

\bibitem{ben}
B.\:Steinberg, The uniform word problem for groups and finite Rees
 quotients of $E$-unitary inverse semigroups. \textit{J.\:Algebra}
\textbf{266}  (2003),  1--13. 

\bibitem{maria}
M.\:B.\:Szendrei, A generalization of McAlister's $P$-theorem for
$E$-unitary regular semigroups. 
\textit{Acta Sci.\:Math.\:(Szeged)} \textbf{51} (1987), 229--249.

\bibitem{costa1}
A.\:Veloso da Costa, Graph products of monoids.
\textit{Semigroup Forum} \textbf{63} (2001), 247--277.

\bibitem{costa2}
A.\:Veloso da Costa, On graph products of automatic monoids.
\textit{Theoretical Informatics and Applications} \textbf{35}
(2001), 403--417.


\end{thebibliography}
\end{document}